\newtheorem{theorem}{Theorem}[section]
\newtheorem{lemma}[theorem]{Lemma}
\newtheorem{corollary}[theorem]{Corollary}
\newtheorem{question}[theorem]{Question}
\newtheorem*{maintheorem1}{Main Theorem}
\newtheorem*{maintheorem2}{Main Theorem (dual form)}
\newtheorem*{theorem34}{Theorem~\ref{thm:main}}
\newtheorem*{theorem*}{Theorem}
\newtheorem*{theoremdual}{Dual form of Theorem~\ref{thm:main1}}
\newtheorem*{corollary*}{Corollary}
\newtheorem*{observation1}{Observation 1}
\newtheorem*{observation2}{Observation 2}
\newtheorem*{observation3}{Observation 3}
\newtheorem*{claim}{Claim}
\newtheorem*{sub-claim}{sub-claim}
\theoremstyle{definition}
\newtheorem{example}[theorem]{Example}
\newtheorem{definition}[theorem]{Definition}
\theoremstyle{remark}
\newcommand{\R}{\mathbb{R}}
\newcommand{\N}{\mathbb{N}}
\newcommand{\Z}{\mathbb{Z}}
\newcommand{\U}{\mathcal{U}}
\renewcommand{\O}{\mathcal{O}}
\newcommand{\explicitSet}[1]{\left\lbrace #1 \right\rbrace}
\newcommand{\brackets}[1]{\left\langle #1 \right\rangle}
\newcommand{\set}[2]{\explicitSet{#1 \colon #2}}
\newcommand{\seq}[2]{\brackets{#1 \colon #2}}
\newcommand{\<}{\langle}
\renewcommand{\>}{\rangle}
\renewcommand{\a}{\alpha}
\renewcommand{\b}{\beta}
\newcommand{\g}{\gamma}
\newcommand{\dlt}{\delta}
\newcommand{\e}{\varepsilon}
\newcommand{\z}{\zeta}
\renewcommand{\k}{\kappa}
\newcommand{\s}{\sigma}
\renewcommand{\t}{\tau}
\newcommand{\w}{\omega}
\newcommand{\0}{\emptyset}
\newcommand{\sub}{\subseteq}
\newcommand{\rest}{\!\restriction\!}
\newcommand{\iso}{\cong}
\newcommand{\closure}[1]{\overline{#1}}
\newcommand{\ulim}{\U\mbox{-}\!\lim_{n \in D}}
\newcommand{\phiulim}{\varphi^*_g(\U)\mbox{-}\!\lim_{n \in D}}
\newcommand{\gulim}{g^*(\U)\mbox{-}\!\lim_{n \in D}}
\newcommand{\ulimmm}{\U\mbox{-}\!\lim_{n \in \Z}}
\newcommand{\invlim}{\textstyle \varprojlim}
\newcommand{\quotient}{\twoheadrightarrow}
\newcommand{\haleph}{H_{\aleph_2}}
\newcommand{\pwmf}{\mathcal{P}(\w)/\mathrm{fin}}
\newcommand{\up}{^\uparrow}
\renewcommand{\AA}{\mathbb{A}}
\newcommand{\BB}{\mathbb{B}}
\newcommand{\continuum}{\mathfrak{c}}
\newcommand{\ch}{\ensuremath{\mathsf{CH}}\xspace}
\newcommand{\zfc}{\ensuremath{\mathsf{ZFC}}\xspace}
\newcommand{\pfa}{\ensuremath{\mathsf{PFA}}\xspace}
\newcommand{\ocama}{\ensuremath{\mathsf{OCA+MA}}\xspace}
\newcommand{\ma}{\ensuremath{\mathsf{MA}}\xspace}
\newcommand{\oca}{\ensuremath{\mathsf{OCA}}\xspace}
\newcommand{\cbul}{\,^{\!\,_{\!\,_{\!\,_{\bullet}}}}}
\newcommand{\ccirc}{\,^{\!\,_{\!\,_{\!\,_{\circ}}}}}
\newcommand{\mapstoof}{\xmapsto{\ \!_{\,\!_{f}}\,}_{\!_{\O}}}
\newcommand{\mapstoo}{\xmapsto{\ \!_{\,\!_{\psi_p}}\,}_{\!_{\O}}}
\newcommand{\mapstooof}{\xmapsto{\ \!_{\,\!_{\psi_p}}\,}_{\!_{\O^m(E)}}}
\newcommand{\mapstoooof}{\xmapsto{\ \!_{\,\!_{\psi_{p_j}}}\,}_{\!_{\O^m(E^m)}}}
\newcommand{\mapstoooaf}{\xmapsto{\ \!_{\,\!_{\psi_{p_j}}}\,}_{\!_{\O^m(F)}}}
\newcommand{\mapstooobf}{\xmapsto{\ \!_{\,\!_{\psi_{p_j}}}\,}_{\!_{\O^m(E)}}}
\newcommand{\mapstooocf}{\xmapsto{\ \!_{\,\!_{\psi_{p_j}}}\,}_{\!_{\O^m(F^\uparrow_m)}}}
\newcommand{\mapstooodf}{\xmapsto{\ \!_{\,\!_{\psi_{p_j}}}\,}_{\!_{\O^m}}}
\newcommand{\mapstoooef}{\xmapsto{\ \!_{\,\!_{\psi_{p_j}}}\,}_{\!_{\O^{m'}}}}
\newcommand{\embeds}{\hookrightarrow}
\renewcommand{\quotient}{\twoheadrightarrow}
\begin{document}

\title{Universal flows and automorphisms of $\pwmf$}
\author{Will Brian}
\address {
W. R. Brian\\
Department of Mathematics and Statistics\\
University of North Carolina at Charlotte\\
9201 University City Blvd.\\
Charlotte, NC 28223-0001}
\email{wbrian.math@gmail.com}
\urladdr{wrbrian.wordpress.com}
\subjclass[2010]{Primary: 54H20, 03E35, Secondary: 03C98, 37B05}
\keywords{Stone-\v{C}ech compactification, flows and dynamical systems, universal properties, Continuum Hypothesis, elementary submodels}

\begin{abstract}
We prove that for every countable discrete group $G$, there is a $G$-flow on $\w^*$ that has every $G$-flow of weight $\leq\! \aleph_1$ as a quotient. It follows that, under the Continuum Hypothesis, there is a universal $G$-flow of weight $\leq\!\continuum$.

Applying Stone duality, we deduce that, under \ch, there is a trivial automorphism $\t$ of $\pwmf$ with every other automorphism embedded in it, which means that every other automorphism of $\pwmf$ can be written as the restriction of $\t$ to a suitably chosen subalgebra.
\end{abstract}

\maketitle


\section{Introduction}

A $G$\emph{-flow} is a continuous action of a group $G$ on a compact Hausdorff space. Given two $G$-flows $\cbul\!: G \times X \to X$ and $\ccirc\!: G \times Y \to Y$, a \emph{quotient map} from $\cbul$ to $\ccirc$ is a continuous surjection $\pi: X \to Y$ that preserves the action of $G$, in the sense that $\pi(g \cbul x) = g \ccirc \pi(x)$ for every $g$ and $x$.


\begin{maintheorem1}
Let $G$ be a countable discrete group, and consider the trivial flow on the space $(G \times \w)^*$ induced by the natural action of $G$ on $G \times \omega$, namely $g \cbul (h,n) = (gh,n)$.
\begin{enumerate}
\item Every $G$-flow of weight $\leq\! \aleph_1$ is a quotient of this flow.
\item Consequently, the Continuum Hypothesis implies this flow is universal for $G$-flows of weight $\leq\! \continuum$.
\end{enumerate}
\end{maintheorem1}

\noindent Here $(G \times \w)^* = \beta(G \times \w) \setminus (G \times \w)$ is the Stone-\v{C}ech remainder of the discrete space $G \times \w$. The \emph{weight} of a flow means the weight of its underlying topological space $X$, that is, the smallest cardinality of a basis for $X$.

What is called the ``Main Theorem'' above is in fact just a corollary to a stronger result, which we call the ``Main Lemma,'' but its statement is more technical and will be postponed until later. Roughly, the main lemma characterizes the weight-$\aleph_1$ quotients of any trivial $S$-flow on $\w^*$ for any countable discrete semigroup $S$. This improves on some former work of the author in \cite{Brian}, which gives a topological characterization of the weight-$\aleph_1$ quotients of a particular $(\N,+)$-flow on $\w^*$ (the one generated by the shift map).

The assumption of the Continuum Hypothesis (henceforth abbreviated \ch) in part (2) of this theorem is necessary, in the sense that the conclusion does not follow from \zfc alone. Even more is true: \zfc does not prove the existence of universal weight-$\continuum$ flows for any group, as the following observation shows.

\begin{theorem}\label{thm:noflows}
It is consistent that no (semi)group $G$ admits a universal $G$-flow of weight $\leq\! \continuum$.
\end{theorem}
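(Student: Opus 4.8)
The plan is to show that a single cardinal-arithmetic hypothesis forces the nonexistence of universal weight-$\continuum$ flows for every semigroup at once, and then to invoke a standard consistency result to obtain a model of that hypothesis. Specifically, I would work under the assumption $2^{\aleph_0} < 2^{\aleph_1}$, which is consistent with \zfc (for instance it holds after adding $\aleph_2$ Cohen reals, or more simply in any model of $\mathsf{MA} + \neg\ch$; either way one may cite Kunen). The idea is that a universal flow of weight $\leq\!\continuum$ would have to have every flow of weight $\leq\!\continuum$ as a quotient, but there are $2^{\continuum} = 2^{2^{\aleph_0}}$ many nonisomorphic candidate quotients to accommodate and only $\continuum^{\continuum} = 2^{\continuum}$ many continuous maps out of the universal object — so a naive counting argument is not immediately decisive, and the real leverage must come from a \emph{rigidity} or \emph{incompressibility} phenomenon at weight $\aleph_1$ that fails once $2^{\aleph_0} < 2^{\aleph_1}$.

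Concretely, here is the sequence of steps I would carry out. First, for each subset $A \subseteq \aleph_1$ (or each function in $2^{\aleph_1}$), construct a $G$-flow $X_A$ of weight $\leq\!\aleph_1 \leq \continuum$ that ``codes'' $A$ in an isomorphism-invariant way — the simplest approach is to take $G$ trivial, so a $G$-flow is just a compact Hausdorff space, and let $X_A$ be a compactum of weight $\aleph_1$ whose homeomorphism type determines $A$; a disjoint sum of a rigid family of continua indexed by $\aleph_1$, with the $\alpha$-th summand's local structure recording whether $\alpha \in A$, will do. (For nontrivial $G$ one decorates the trivial flow similarly; the point is only that weight-$\aleph_1$ flows realize at least $2^{\aleph_1}$ isomorphism types.) Second, observe that if $U$ is any fixed flow of weight $\leq\!\continuum$, then $U$ has at most $\continuum^{\continuum} = 2^{\continuum}$ many distinct quotients up to isomorphism — but under $2^{\aleph_0} < 2^{\aleph_1}$ we have $2^{\continuum} \geq 2^{\aleph_1} > \ldots$, so counting alone does not close the gap. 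Hence the third and crucial step: refine the family $\{X_A\}$ so that the quotients of $U$ that could possibly be isomorphic to some $X_A$ form a set of size at most $\continuum$. This is where the rigidity must be exploited — each $X_A$ should be chosen so that \emph{any} continuous surjection $U \to X_A$ factors through a canonical quotient determined by a function $\continuum \to 2$, i.e.\ by only $\continuum$-much data, forcing the number of realizable $X_A$'s to be $\leq\!\continuum < 2^{\aleph_1}$, a contradiction.

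The main obstacle, and the step that needs the most care, is precisely that third step: arranging that maps out of the putative universal flow $U$ cannot exploit the full richness of $2^{\aleph_1}$. The cleanest way I anticipate is to use the Stone-dual, Boolean-algebraic picture (available since a flow on a zero-dimensional space corresponds to a $G$-action on a Boolean algebra, and general flows are inverse limits of such), reducing the problem to: a Boolean algebra $B$ of size $\leq\!\continuum$ cannot have every Boolean algebra of size $\leq\!\continuum$ as a quotient when $2^{\aleph_0} < 2^{\aleph_1}$ — and this last fact is a known ZFC-provable-under-that-hypothesis statement about the nonexistence of universal Boolean algebras (equivalently, via Stone duality, universal compacta) of the relevant character, traceable to Shelah-style arguments on universality. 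Wrapping the $G$-action around this core is then routine: the action only adds a countable amount of extra structure per element of $G$, so it multiplies all cardinal bounds by $|G| \leq \aleph_0$ and changes nothing. I would therefore present the theorem as: in any model of $2^{\aleph_0} < 2^{\aleph_1}$ (which is consistent), the nonexistence of universal weight-$\continuum$ compacta — hence of universal weight-$\continuum$ $G$-flows for every $G$ — follows from the failure of universality for Boolean algebras of size $\continuum$, with the group action carried along harmlessly.
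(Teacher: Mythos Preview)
Your proposal eventually arrives at the correct reduction --- to the purely topological question of whether a universal weight-$\continuum$ compactum can exist --- but by a circuitous and partly flawed route. The paper does this in one line: every compact Hausdorff space $X$ carries the trivial $G$-flow $(g,x)\mapsto x$, so a universal weight-$\continuum$ $G$-flow would have to map onto every weight-$\continuum$ compactum; it then simply cites Dow and Hart (Section~6 of \cite{Dow&Hart}) for the consistency of no such universal compactum. No counting, no rigidity construction, and no cardinal-arithmetic hypothesis is needed beyond whatever Dow--Hart use in their model.

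Several steps in your argument are genuinely problematic. First, $\mathsf{MA}+\neg\ch$ implies $2^{\aleph_0}=2^{\aleph_1}$, not $2^{\aleph_0}<2^{\aleph_1}$, so that parenthetical is wrong (the Cohen-model example is fine). Second, and more seriously, you never actually establish that $2^{\aleph_0}<2^{\aleph_1}$ implies nonexistence of a universal weight-$\continuum$ compactum: your counting argument, as you yourself acknowledge, does not close the gap, and the ``rigidity'' step is a sketch of a hope rather than an argument --- the claim that every surjection $U\to X_A$ factors through $\continuum$-much data is exactly the hard part and is left unjustified. Third, your Stone-duality translation is backwards: a continuous surjection $Y\twoheadrightarrow X$ of zero-dimensional compacta dualizes to a Boolean-algebra \emph{embedding} $\mathrm{Clop}(X)\hookrightarrow\mathrm{Clop}(Y)$, not a quotient, so the relevant nonexistence statement concerns Boolean algebras universal under embeddings, not under quotient maps. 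The cleanest repair is to drop the cardinal-arithmetic scaffolding entirely, make the trivial-flow observation explicit, and cite the Dow--Hart model directly.
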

\begin{proof}
Observe that for every (semi)group $G$ and every compact Hausdorff space $X$, there is at least one $G$-flow on $X$, namely the trivial flow $(g,x) \mapsto x$.
It follows from this observation that if $\cbul\!: G \times Y \to Y$ is a universal $G$-flow of weight $\leq\! \continuum$, then every compact Hausdorff space $X$ of weight $\leq\! \continuum$ is a continuous image of $Y$.
In Section 6 of \cite{Dow&Hart}, Dow and Hart show that it is consistent that no such space $Y$ exists.
\end{proof}

\vspace{2mm}

If $G = (\Z,+)$, then each $G$-flow on $\w^*$ is generated by an autohomeomorphism of $\w^*$ and, conversely, every autohomeomorphism of $\w^*$ generates a $G$-flow. Thus, via Stone duality, results concerning $(\Z,+)$-flows on $\w^*$ can be translated into results concerning automorphisms of the Boolean algebra $\pwmf$.

Given two automorphisms $\varphi,\psi: \pwmf \to \pwmf$, we say that $\varphi$ \emph{embeds} in $\psi$ if there is a self-embedding $e: \pwmf \to \pwmf$ such that $e \circ \varphi = \psi \circ e$. Equivalently, $\varphi$ embeds in $\psi$ if there is a subalgebra $\AA$ of $\pwmf$ such that $(\pwmf,\varphi)$ is isomorphic to $(\AA,\psi \rest \AA)$. 

\begin{center}
\begin{tikzpicture}

\node at (5,0) {$\pwmf$};
\node at (8.5,0) {$\pwmf$};
\node at (8.5,2) {$\pwmf$};
\node at (5,2) {$\pwmf$};
\draw[->] (5.95,2) -- (7.55,2); \node at (6.75,2.25) {$\psi$};
\draw[->] (5.95,0) -- (7.55,0); \node at (6.75,-.25) {$\varphi$};
\draw[right hook->] (8.5,.35) -- (8.5,1.68); \node at (8.7,1) {\small $e$};
\draw[right hook->] (5,.35) -- (5,1.68); \node at (4.8,1) {\small $e$};
\node at (11.5,1.25) {\Large $\varphi \embeds \psi$};

\end{tikzpicture}
\end{center}

\begin{maintheorem2}
Assuming \ch, there is a trivial automorphism $\t$ of the Boolean algebra $\pwmf$ such that every other automorphism embeds in $\t$. 
\end{maintheorem2}

By applying Stone duality, this result follows directly from a special case of the Main Theorem, namely the case $G = (\Z,+)$. The automorphism $\t$ mentioned in the theorem is the one generated by the map $(z,n) \mapsto (z+1,n)$ on $\Z \times \w$.

In fact, we will go a bit further and investigate precisely which automorphisms of $\pwmf$ are universal in the sense of this dualized Main Theorem. The investigation is carried out in \zfc+\ch, because those are the axioms needed for applyling the Main Lemma. We will classify precisely which trivial automorphisms are universal under \ch, and show that there are many nontrivial universal automorphisms as well.

\vspace{2mm}

The paper is organized as follows.

Section~\ref{sec:theorem} begins by establishing the terminology necessary for stating the Main Lemma, and ends by stating it and deriving the Main Theorem from it. We will include a brief outline of the proof, and some hints as to where the difficulties lie, but the proof of this result is put off until the end of the paper.

Section~\ref{sec:dynamics} looks at the automorphisms of $\pwmf$ in light of the Main Lemma, and initiates an investigation of the ``embeds in'' relation on the set of automorphisms of $\pwmf$, focusing on universal automorphisms. Most of the theorems in Section~\ref{sec:dynamics} have relatively simple proofs that use the Main Lemma, Stone duality, and basic ideas and techniques from topological dynamics. Most of the results are established in \zfc+\ch so that the Main Lemma can be applied; but some observations using \ocama are made in order to establish the independence of some of the results proved using \ch. Section~\ref{sec:dynamics} includes several open questions.


At last, Section~\ref{sec:proof} gives a proof of the Main Lemma. A significant portion of this section is devoted to introducing and developing the topological tools needed to prove the result. These tools are used in the proof of this result and nowhere else in the paper, and this is part of the reason for postponing the proof until the end of the paper. The other part is the author's opinion that the material in Sections \ref{sec:dynamics} is simply more exciting.

\section{The main lemma}\label{sec:theorem}

\subsection{Semigroup actions and Stone-\v{C}ech remainders}


An \emph{action} of a semigroup $S$ on a set $X$ is a function $\varphi: S \times X \to X$ with the property that $\varphi(q,\varphi(p,x)) = \varphi(pq,x)$ for all $p,q \in S$. 
We will almost always write $\varphi_p(x)$ for $\varphi(p,x)$. Using this notation, an action of $S$ on $X$ is a function $\varphi: S \times X \to X$ such that $\varphi_p \circ \varphi_q = \varphi_{pq}$ for all $p,q \in S$. An action is called \emph{separately finite-to-one} if each of the functions $\varphi_p$ is finite-to-one.

All the semigroups we consider in this paper are discrete. Thus an action $\varphi$ of a semigroup $S$ on a topological space $X$ is called \emph{continuous} if each of the functions $\varphi_p$ is continuous. (Because $S$ is discrete, this is equivalent to the requirement that $\varphi$ be continuous on $S \times X$.)
Thus an $S$-flow is simply a collection $\set{\varphi_p}{p \in S}$ of continuous functions on a compact Hausdorff space $X$, with the property that $\varphi_p \circ \varphi_q = \varphi_{pq}$ for all $p,q \in S$. In what follows, we will often consider an $S$-flow to be a $S$-indexed collection of functions on $X$, rather than a single function on $S \times X$.

If $D$ is a countable discrete space, then $\b D$ and $D^*$ are, respectively, the Stone-\v{C}ech compactification and the Stone-\v{C}ech remainder of $D$. In what follows we will usually have $D = \w$, $D = S$, or $D = S \times \w$ for some countable discrete semigroup $S$.

Every function $f: D \to D$ extends (uniquely) to a continuous function $\b f: \b D \to \b D$, called the \emph{Stone extension} of $f$, defined by
$$\b f(\U) = \set{A \sub D}{f^{-1}(A) \in \U}$$
for all $\U \in \b D$..
If $f: D \to D$ is finite-to-one, then $\b f$ maps $D^*$ to $D^*$ and we denote $\b f \rest D^*$ by $f^*$.
We say that $f^*$ is \emph{induced} by $f$, and any function on $D^*$ that is obtained in this way is called \emph{trivial}.

Suppose $\varphi: S \times D \to D$ is a separately finite-to-one action of a semigroup $S$ on a countable discrete set $D$. For each $p \in S$ the function $\varphi_p: D \to D$ induces a trivial map $(\varphi_p)^*: D^* \to D^*$.
Taken together, these functions define an action of $S$ on $D^*$.
Formally, we define a function $\varphi^*: S \times D^* \to D^*$ by setting
$$(\varphi^*)_p = (\varphi_p)^*$$
for all $p \in S$ (and naturally, we write $\varphi^*_p$ instead of $(\varphi_p)^*$ or $(\varphi^*)_p$). One may easily check that $\varphi^*$ is an action of $S$ on $D^*$, and, because each of the functions $\varphi_p^*$ is continuous, this makes $\varphi^*$ an $S$-flow. Any $S$-flow on $D^*$ arising in this way is called \emph{trivial}, and we say that the action $\varphi^*$ on $D^*$ is \emph{induced} by the action $\varphi$ on $D$.

\subsection{Metrizable reflections.}

Recall that $H_\k$ denotes the set of all sets hereditarily smaller than $\k$. In what follows, the letter $H$ will always denote a set of the form $H_\k$ for some sufficiently large value of $\k$. The precise value of $\k$ does not matter very much for our purposes, but the reader who wishes to be economical is welcome to verify that $\k = \aleph_2$ is sufficient, because every structure we consider can be coded inside $\haleph$.

To be more precise, let us described a way of coding a weight-$\aleph_1$ flow in $\haleph$. Every topological space of weight $\aleph_1$ is (up to homeomorphism) a closed subspace of $[0,1]^{\aleph_1}$. A ``code'' for a closed subspace of $[0,1]^{\aleph_1}$ could be an $\w_1$-length list of all the basic open neighborhoods in $[0,1]^{\aleph_1}$ that intersect $X$. A continuous function on $X$ could be coded as a directed graph, with these basic open neighborhoods as its vertices, the arrows in the graph indicating the relation $f(U \cap X) \cap V \neq \0$.

We will look at countable elementary submodels of the structure $(H,\in)$. Roughly, the idea is that if $(M,\in) \preceq (H,\in)$ then only a countable portion of the ``code'' for some weight-$\aleph_1$ flow is captured by $M$. Decoding this flattened fragment of the code leads to a metrizable flow, which we call a metrizable reflection of the original. The process is not unlike file compression: an elementary submodel acts as a data compressor on our topological flows, turning a file with $\aleph_1$ bits of information into a zip file with $\aleph_0$ bits that still carries essentially the same message.

In what follows, we will work at a topological level and avoid the tedious business of encoding and decoding spaces. On the one hand, this ``pointed'' approach forces us to work with larger objects than necessary, which obscures the fact that our theorems are naturally set in $\haleph$; on the other hand, we will see that points and sequences of points feature prominently in our proofs, and eliminating them would obscure the ideas behind the proofs.

If $(M,\in)$ is a countable elementary submodel of $(H,\in)$, then $M \cap \w_1$ is a countable ordinal number. We denote this ordinal by $\dlt^M$.

The points of $[0,1]^{\aleph_1}$ are functions $x: \w_1 \to [0,1]$, and we define the projection maps
$$\pi_\a(x) = x(\a) \qquad \text{ and } \qquad \Pi_\dlt(x) = x \rest \dlt$$
for $\a,\dlt \in \w_1$. 
If for each $\a < \dlt$ we have a function $x_\a: Z \to [0,1]$, then $\Delta_{\a < \dlt}x_\a$ denotes the unique function $Z \to [0,1]^\dlt$ with $\pi_\a \circ \Delta_{\a < \dlt}x_\a = x_\a$ for all $\a < \w_1$.

\begin{definition}$\ $
\begin{itemize}
\item Let $X$ be a closed subspace of $[0,1]^{\w_1}$, and let $M$ be a countable elementary submodel of $H$. The \emph{reflection of $X$ in $M$} is the space
$$X^M = \Pi_{\dlt^M}[X].$$
Any space of this form is called a \emph{metrizable reflection} of $X$.

\item If $f: X \to X$ is continuous, then the reflection of $f$ in $M$, denoted $f^M$, is the continuous function $f^M: X^M \to X^M$ defined by the equation
$$\Pi_{\dlt^M} \circ f = f^M \circ \Pi_{\dlt^M},$$
In other words, $f^M$ is the unique continuous function making the following diagram commute:
\begin{center}
\begin{tikzpicture}
\draw [fill=white,white] (-1,0) circle (5pt);  \node at (-1,0) {$X^M$};
\draw [fill=white,white] (2,0) circle (5pt);  \node at (2,0) {$X^M$};
\draw [fill=white,white] (2,2) circle (5pt);  \node at (2,2) {$X$};
\draw [fill=white,white] (-1,2) circle (5pt);  \node at (-1,2) {$X$};
\draw[->] (-.6,2) -- (1.6,2); \node at (.5,2.3) {$f$};
\draw[->,dashed] (-.6,0) -- (1.6,0); \node at (.5,-.3) {$f^M$};
\draw[->] (2,1.68) -- (2,.3); \node at (2.45,1) {\small $\Pi_{\dlt^M}$};
\draw[->] (-1,1.68) -- (-1,.3); \node at (-1.4,1) {\small $\Pi_{\dlt^M}$};
\end{tikzpicture}
\end{center}
Any function of this form is called a \emph{metrizable reflection} of $f$.

\item Similarly, if $\psi$ is an $S$-flow on $X$, then the \emph{reflection of $\psi$ in $M$} is denoted $\psi^M$ and is defined by putting $(\psi^M)_p = (\psi_p)^M$ for all $p \in S$ (and, of course, this map will be denoted simply by $\psi^M_p$). Any flow of this form is called a \emph{metrizable reflection} of $\psi$.
\end{itemize}
\end{definition}

This definition is essentially due to Bandlow; see \cite{Bandlow}. It is not immediately clear that the function $f^M$ is well-defined, but this turns out to be a consequence of elementarity, articulated in the following lemma.

\begin{lemma}
Let $X$ be a subspace of $[0,1]^{\w_1}$ and let $f: X \to X$ be continuous. If $M$ is an elementary submodel of $H$, then
$$\Pi_{\dlt^M}(x) = \Pi_{\dlt^M}(y) \qquad \text{implies} \qquad \Pi_{\dlt^M} \circ f(x) = \Pi_{\dlt^M} \circ f(y)$$
for all $x,y \in X$. In other words, the function $f^M: X^M \to X^M$ is well-defined.
\end{lemma}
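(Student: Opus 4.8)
The plan is to prove the contrapositive of the key implication only superficially; really the natural route is a direct argument using elementarity. Suppose $\Pi_{\dlt^M}(x) = \Pi_{\dlt^M}(y)$, i.e., $x\rest\dlt^M = y\rest\dlt^M$, where $\dlt^M = M\cap\w_1$. We want $f(x)\rest\dlt^M = f(y)\rest\dlt^M$, which is to say $\pi_\a(f(x)) = \pi_\a(f(y))$ for every $\a < \dlt^M$, i.e., for every $\a\in M\cap\w_1$. Fix such an $\a$. Since $f$ is continuous and $[0,1]$ is Hausdorff, the condition ``$\pi_\a(f(x)) = \pi_\a(f(y))$'' should be forced by agreement of $x$ and $y$ on some countable set of coordinates that $M$ ``knows about.''

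First I would make precise the continuity statement: because $\pi_\a\circ f\colon X\to[0,1]$ is continuous and $X\sub[0,1]^{\w_1}$ carries the product topology, for each $x\in X$ and each basic open $V\ni \pi_\a(f(x))$ there is a \emph{finite} set $F\sub\w_1$ and a basic open box determined by coordinates in $F$ whose intersection with $X$ maps into $V$ under $\pi_\a\circ f$. The crucial point is that the value $\pi_\a(f(x))$ depends only on $x\rest C$ for a suitable countable set $C\sub\w_1$ of coordinates, and by elementarity this $C$ can be taken inside $M$. Concretely: the statement ``there is a countable $C\sub\w_1$ such that for all $u,v\in X$, $u\rest C = v\rest C$ implies $\pi_\a(f(u)) = \pi_\a(f(v))$'' is true in $H$ (one can take $C$ to be, e.g., a countable set witnessing that $\pi_\a\circ f$ factors through a metrizable quotient, which exists since the image is second countable and $X$ is a subspace of a product — this is a standard fact about maps from subspaces of products to second-countable spaces). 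Since $\a\in M$ and $f,X\in M$, elementarity gives such a $C\in M$; being a countable set in $M$, we have $C\sub M$, hence $C\sub M\cap\w_1 = \dlt^M$.

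Then I would finish: since $x\rest\dlt^M = y\rest\dlt^M$ and $C\sub\dlt^M$, we get $x\rest C = y\rest C$, so by the defining property of $C$ we conclude $\pi_\a(f(x)) = \pi_\a(f(y))$. As $\a < \dlt^M$ was arbitrary, $\Pi_{\dlt^M}(f(x)) = \Pi_{\dlt^M}(f(y))$, as required. Well-definedness of $f^M\colon X^M\to X^M$ is then immediate: given $z\in X^M = \Pi_{\dlt^M}[X]$, pick any $x\in X$ with $\Pi_{\dlt^M}(x) = z$ and set $f^M(z) = \Pi_{\dlt^M}(f(x))$; the implication just proved shows this does not depend on the choice of $x$. (Continuity of $f^M$ and the fact that its range lands in $X^M$ are routine: $\Pi_{\dlt^M}$ is a continuous closed surjection onto $X^M$, so $f^M$ is continuous by the universal property of quotient maps, and its range is $\Pi_{\dlt^M}[f[X]]\sub\Pi_{\dlt^M}[X] = X^M$.)

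The main obstacle is the lemma-within-the-lemma asserting that a continuous map from a subspace of $[0,1]^{\w_1}$ to a second-countable space depends on only countably many coordinates, together with the care needed to phrase it as a first-order (or at least $H$-absolute) statement so that elementarity applies and yields the witness $C$ inside $M$. One has to be a little careful that ``$C$ works for all $u,v\in X$'' is the property being reflected, not merely ``$C$ works for the particular $x,y$'' (which are not in $M$). Once the reflected statement is set up correctly, everything else is bookkeeping.
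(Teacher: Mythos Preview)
Your proposal is correct and is essentially the standard argument; the paper itself does not give a proof but simply attributes the lemma to Noble--Ulmer and Shchepin, whose proofs proceed exactly as you outline (continuous maps into second-countable spaces factor through countably many coordinates, then elementarity pulls the witnessing countable set $C$ into $M$). Two minor remarks: first, you correctly use $X,f\in M$, an assumption the paper leaves implicit in the lemma statement but makes explicit elsewhere; second, the ``countable dependence'' fact you invoke is unproblematic here because in the paper's setting $X$ is always a closed (hence compact) subspace of $[0,1]^{\omega_1}$, which is the case where this fact is easiest to verify.
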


This lemma was first proved by Noble and Ulmer in \cite{N&U} and later rediscovered by Shchepin in \cite{Shchepin} (neither source phrases it quite this way, but their proofs show this nonetheless).

Notice that if $\psi$ is a flow, then any of its metrizable reflections $\psi^M$ is a quotient of $\psi$, with $\Pi_{\dlt^M}$ being the natural quotient mapping.

\subsection{Sequences that almost match a flow.}

A sequence $\seq{x_n}{n \in D}$ of points in a space $X$ is called \emph{tail-dense} if for any finite $F \sub D$, the set $\set{x_n}{n \in D \setminus F}$ is dense in $X$.
If $X$ is a metric space and $x,y \in X$, we write $x \approx_\e y$ to mean $\mathrm{dist}(x,y) < \e$.

\begin{definition}\label{def:philike}
Let $X$ be a compact metric space and $D$ a countable set.
\begin{itemize}
\item Let $f: X \to X$ be continuous, and let $p: D \to D$ be any function.
\begin{itemize}
\item[$\circ$] A $D$-indexed sequence $\seq{x_n}{n \in D}$ of points in $X$ is said to be \emph{$p$-like up to $\e$ with respect to $f$}
if
$$f(x_n) \approx_\e x_{p(n)} \qquad \text{for all but finitely many } n \in D.$$
When $f$ is clear from context, we say simply that the sequence is $p$-like up to $\e$.
\item[$\circ$] If $\seq{x_n}{n \in D}$ is $p$-like up to $\e$ for every $\e > 0$, then we say that $\seq{x_n}{n \in D}$ is \emph{$p$-like (with respect to $f$)}.
\end{itemize}
\item Let $\psi: S \times X \to X$ be an $S$-flow, and let $\varphi: S \times D \to D$ be an action of $S$ on $D$.
\begin{itemize}
\item[$\circ$] A $D$-indexed sequence $\seq{x_n}{n \in D}$ of points in $X$ is said to be \emph{$\varphi$-like up to $\e$ with respect to $\psi$} if  for every $p \in S$, it is $\varphi_p$-like up to $\e$ with respect to $\psi_p$. When $\psi$ is clear from context, we say simply that the sequence is $p$-like up to $\e$.
\item[$\circ$] If $\seq{x_n}{n \in D}$ is $\varphi$-like up to $\e$ for every $\e > 0$, then we say that $\seq{x_n}{n \in D}$ is $\varphi$-like (with respect to $f$).
\end{itemize}
\end{itemize}
\end{definition}

In other words, $\seq{x_n}{n \in D}$ is $\varphi$-like up to $\e$ if, on a tail of the sequence, the action of $\psi_p$ on the $x_n$ ``approximately'' (up to an error of $\e$) matches the action of $\varphi_p$ on their indices. The first part of the definition is illustrated below.

\vspace{2mm}\begin{center}
\begin{tikzpicture}[style=thick, xscale=.67,yscale=1]

\draw (-5,-.6) node [circle, draw, fill=black, inner sep=0pt, minimum width=3pt] {} -- (-5,-.6);
\draw (-5,.1) node [circle, draw, fill=black, inner sep=0pt, minimum width=3pt] {} -- (-5,.1);
\draw (-5,.8) node [circle, draw, fill=black, inner sep=0pt, minimum width=3pt] {} -- (-5,.8);
\draw (-5,1.5) node [circle, draw, fill=black, inner sep=0pt, minimum width=3pt] {} -- (-5,1.5);
\draw (-5,2.2) node [circle, draw, fill=black, inner sep=0pt, minimum width=3pt] {} -- (-5,2.2);
\draw (-5,-1.3) node [circle, draw, fill=black, inner sep=0pt, minimum width=3pt] {} -- (-5,-1.3);
\draw[->] (-4.9,2.1) .. controls (-4.7,1.9) and (-4.7,1.8) .. (-4.9,1.6);
\draw[->] (-4.9,1.4) .. controls (-4.7,1.2) and (-4.7,1.1) .. (-4.9,.9);
\draw[->] (-4.9,.7) .. controls (-4.7,.5) and (-4.7,.4) .. (-4.9,.2);
\draw[->] (-4.9,0) .. controls (-4.7,-.2) and (-4.7,-.3) .. (-4.9,-.5);
\draw[->] (-4.9,-.7) .. controls (-4.7,-.9) and (-4.7,-1) .. (-4.9,-1.2);
\draw (-5,-1.7) node {\small $\vdots$} -- (-5,-1.7);
\draw (-5.3,2.18) node {\scriptsize $1$} -- (-5.3,2.18);
\draw (-5.3,1.48) node {\scriptsize $2$} -- (-5.3,1.48);
\draw (-5.3,.78) node {\scriptsize $3$} -- (-5.3,.78);
\draw (-5.3,.08) node {\scriptsize $4$} -- (-5.3,.08);
\draw (-5.3,-.62) node {\scriptsize $5$} -- (-5.3,-.62);
\draw (-5.3,-1.32) node {\scriptsize $6$} -- (-5.3,-1.32);
\node at (-7.5,1) {\small $D = \N$};
\node at (-7.5,.3) {\footnotesize $s(n) = n+1$};

\draw (-1.5,-2.1) -- (-1.5,2.5) -- (9,2.5) -- (9,-2.1) -- (-1.5,-2.1);
\node at (8,-1.6) {$X$};
\node at (1.5,1.8) {an $s$-like sequence};

\node[circle, draw, fill=black, inner sep=0pt, minimum width=4pt] at (.4,-1) {};
\node at (.25,-.7) {\small $x_1$};
\draw[->] (.47,-1) .. controls (.9,-.9) and (1,-.8) .. (1.23,-.47);
\node at (1.1,-1) {\scriptsize $f$};
\node[circle, black!30, draw, fill=black!30, inner sep=0pt, minimum width=3pt] at (1.28,-.4) {};
\node[circle, draw, fill=black, inner sep=0pt, minimum width=4pt] at (2,-.2) {};
\node at (1.85,.1) {\small $x_2$};
\draw[->] (2,-.2) .. controls (2.8,0) and (3.2,0) .. (4,-.2);
\node at (3,.2) {\scriptsize $f$};
\node[circle, black!30, draw, fill=black!30, inner sep=0pt, minimum width=3pt] at (4.11,-.24) {};
\node[circle, draw, fill=black, inner sep=0pt, minimum width=4pt] at (4.4,-.45) {};
\node at (4.27,-.72) {\small $x_3$};
\draw[->] (4.4,-.45) .. controls (5.2,-.67) and (5.6,-.67) .. (6.4,-.45);
\node at (5.4,-.85) {\scriptsize $f$};
\node[circle, black!30, draw, fill=black!30, inner sep=0pt, minimum width=3pt] at (6.52,-.43) {};
\node[circle, draw, fill=black, inner sep=0pt, minimum width=4pt] at (6.8,-.4) {};
\node at (7.05,-.7) {\small $x_4$};
\draw[->] (6.8,-.4) .. controls (7.25,.1) and (7.25,.6) .. (6.8,1.1);
\node at (7.4,.35) {\scriptsize $f$};
\node[circle, black!30, draw, fill=black!30, inner sep=0pt, minimum width=3pt] at (6.75,1.18) {};
\node[circle, draw, fill=black, inner sep=0pt, minimum width=4pt] at (6.82,1.3) {};
\node at (7,1.55) {\small $x_5$};
\draw[->] (6.82,1.3) .. controls (6.25,1.6) and (5.6,1.6) .. (5,1.25);
\node at (5.9,1.73) {\scriptsize $f$};
\node[circle, black!30, draw, fill=black!30, inner sep=0pt, minimum width=3pt] at (4.9,1.19) {};
\node[circle, draw, fill=black, inner sep=0pt, minimum width=4pt] at (5,1.11) {};
\node at (5.5,1.05) {\small $x_6$};
\node at (4,1.02) {$\dots$};

\end{tikzpicture}\end{center}\vspace{2mm}

As the picture indicates, for the successor map $s(n) = n+1$, an $s$-like sequence is simply a sequence $\seq{x_n}{n \in \N}$ of points in $X$ such that $\lim_{n \to \infty} \mathrm{dist}(f(x_n),x_{n+1}) = 0$. Such sequences are sometimes called ``asymptotic pseudo-orbits'' in the topological dynamics literature.

\subsection{The main lemma and the main theorem.}

We are finally in a position to state the full version of the main result of this paper:

\begin{theorem}[the Main Lemma]\label{thm:main}
Let $S$ be a countable discrete semigroup, let $X$ be a compact Hausdorff space of weight $\leq\!\aleph_1$, and let $D$ be a countable set. Let $\varphi$ be a separately finite-to-one action of $S$ on $D$, and let $\psi: S \times X \to X$ be an $S$-flow. The following are equivalent:
\begin{enumerate}
\item $\psi$ is a quotient of $\varphi^*$.
\item Every metrizable quotient of $\psi$ is a quotient of $\varphi^*$.
\item Some metrizable reflection of $\psi$ is a quotient of $\varphi^*$.
\item Every metrizable quotient of $\psi$ contains a tail-dense $\varphi$-like sequence.
\item Some metrizable reflection of $\psi$ contains a tail-dense $\varphi$-like sequence.
\end{enumerate}
\end{theorem}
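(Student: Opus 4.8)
The plan is to prove the cycle of implications $(1)\Rightarrow(2)\Rightarrow(3)\Rightarrow(5)\Rightarrow(4)\Rightarrow(1)$, together with the trivial $(2)\Rightarrow(5)$ and $(4)\Rightarrow(3)$ needed to close everything up; in fact the natural skeleton is $(1)\Rightarrow(2)\Rightarrow(4)\Rightarrow(5)$ and separately $(2)\Rightarrow(3)\Rightarrow(5)$, with the real content living in $(5)\Rightarrow(1)$. The ``easy half'' is to observe that a quotient of $\varphi^*$ pushes forward to a quotient of $\varphi^*$ on every metrizable quotient (composition of quotient maps), so $(1)\Rightarrow(2)$, and since every metrizable reflection is a metrizable quotient (via $\Pi_{\dlt^M}$, as noted after the Noble--Ulmer lemma) we get $(2)\Rightarrow(3)$ and $(4)\Rightarrow(5)$; conversely a metrizable reflection is cofinal among metrizable quotients in the sense that every metrizable quotient factors through some reflection $\psi^M$ (choose $M$ large enough to contain a code for the quotient map), which gives $(3)\Rightarrow(2)$ and $(5)\Rightarrow(4)$ after one checks the factored map still witnesses what we want. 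So the whole theorem reduces to two genuine claims: (a) for a \emph{compact metric} $Y$ with an $S$-flow $\theta$, $\theta$ is a quotient of $\varphi^*$ if and only if $Y$ contains a tail-dense $\varphi$-like sequence; and (b) the ``reflection'' and ``every metrizable quotient'' versions of the metric statement agree, which is the factoring-through-$M$ argument just sketched.

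For claim (a), the forward direction is the routine one: if $\pi:D^*\to Y$ is a $G$-equivariant continuous surjection, pick any point $u_n\in D^*$ lying in the clopen set $\{n\}^{\mathrm{cl}}\cap D^*$ — more carefully, use that $D$ is dense in $\b D$ and choose $x_n = \pi(\text{image of }n)$ after first extending $\pi$ appropriately, or work with an enumeration and a diagonal argument — to manufacture a $D$-indexed sequence in $Y$ whose closure is all of $Y$ and which is $\varphi_p$-like for each $p$ because $\pi\circ\varphi^*_p = \psi_p\circ\pi$ and $\varphi^*_p$ agrees with $\b\varphi_p$, whose action on the ``copy of $D$'' inside $\b D$ is exactly $\varphi_p$. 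Tail-density comes from surjectivity together with the fact that deleting finitely many points of $D$ does not change $D^*$. The hard direction of (a) — and this is where I expect the main obstacle — is the reverse: given a tail-dense $\varphi$-like sequence $\seq{x_n}{n\in D}$ in the compact metric space $Y$, build a quotient map $\b\iota\!\restriction\!D^* : D^*\to Y$ from the map $\iota:D\to Y$, $n\mapsto x_n$. The map $\b\iota$ restricted to $D^*$ is automatically continuous with closed (hence compact) image; tail-density forces the image to be all of $Y$ (the image contains the closure of $\{x_n : n\notin F\}$ for every finite $F$, and these are cofinal closed sets whose ``intersection at infinity'' is $Y$ — more precisely $\b\iota[D^*] \supseteq \bigcap_F \closure{\{x_n:n\notin F\}} = Y$). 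The only nontrivial point is \emph{equivariance}: we need $\b\iota(\varphi^*_p(\U)) = \psi_p(\b\iota(\U))$ for all $\U\in D^*$ and $p\in S$, and this is precisely the place the $\varphi$-like hypothesis is used. The idea is that $\varphi$-likeness says $\psi_p(x_n)$ and $x_{\varphi_p(n)}$ are within $\e$ for all but finitely many $n$, for every $\e$; taking an ultralimit along $\U$ and using continuity of $\psi_p$ and of $\b\iota$ — specifically $\psi_p(\b\iota(\U)) = \psi_p(\,\U\text{-}\lim x_n) = \U\text{-}\lim \psi_p(x_n)$ while $\b\iota(\varphi^*_p(\U)) = \b\iota(\b\varphi_p(\U)) = \varphi_p(\U)\text{-}\lim x_n = \U\text{-}\lim x_{\varphi_p(n)}$ — the two ultralimits coincide because the sequences $\seq{\psi_p(x_n)}{n}$ and $\seq{x_{\varphi_p(n)}}{n}$ are eventually $\e$-close for every $\e$, hence have the same $\U$-limit in the compact metric space $Y$. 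One must be a little careful that $\b\iota$ does send $D^*$ into $Y$ (it sends $\b D$ onto $\closure{\iota[D]}=Y$ since $Y$ is already compact, and the points of $D^*$ go to ultralimits of the $x_n$, which lie in $Y$), and that $\varphi^*_p$ really is $\b\varphi_p\!\restriction\!D^*$, which needs $\varphi_p$ finite-to-one — exactly the separately-finite-to-one hypothesis.

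Finally, for (b) — upgrading from ``some reflection'' to ``every metrizable quotient'' — I would argue as follows. If $q:X\to Y$ is any metrizable quotient of $\psi$ with quotient flow $\theta$, fix a countable elementary submodel $M\prec H$ containing (a code for) $q$, $\theta$, $\psi$, $\varphi$, and all relevant parameters. Then $q$ factors through $\Pi_{\dlt^M}$: there is a continuous $q_M : X^M\to Y$ with $q = q_M\circ\Pi_{\dlt^M}$, and $q_M$ intertwines $\psi^M$ with $\theta$, essentially by the same Noble--Ulmer/elementarity reasoning that makes $f^M$ well-defined (points of $X$ with the same image under $\Pi_{\dlt^M}$ have the same image under $q$, because $M$ sees the graph of $q$). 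Hence any tail-dense $\varphi$-like sequence in $X^M$ pushes forward under $q_M$ to one in $Y$, and conversely a quotient of $\varphi^*$ onto $X^M$ composes with $q_M$ to give one onto $Y$; this yields $(3)\Leftrightarrow(2)$ and $(5)\Leftrightarrow(4)$ once combined with (a). The one subtlety to watch is that ``$q$ factors through $\Pi_{\dlt^M}$'' is itself the content of the Noble--Ulmer lemma applied to $q$ rather than to an endofunction, so I would state and use a mild variant of that lemma for continuous maps $X\to Y$ between subspaces of cubes; its proof is the same. I expect the write-up to spend most of its effort on the equivariance verification in the hard direction of (a) and on making the coding/elementarity bookkeeping in (b) precise, while the remaining implications are formal consequences of ``composition of quotient maps is a quotient map'' plus the already-noted fact that reflections are quotients.
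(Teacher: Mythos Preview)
Your proposal correctly handles the easy implications and the metric equivalence (your claim (a) is essentially the paper's lemma showing $(2)\Leftrightarrow(4)$ and $(3)\Leftrightarrow(5)$). However, there is a fundamental gap: you identify $(5)\Rightarrow(1)$ as ``the real content'' and then never prove it. Your claims (a) and (b) together establish only the equivalences among $(2),(3),(4),(5)$ and the forward implications from $(1)$; neither produces a quotient map $D^*\to X$ onto the weight-$\aleph_1$ space $X$ itself. Claim (a) applies only to compact \emph{metric} targets, so it cannot by itself yield a surjection onto a non-metrizable $X$, and claim (b) only moves sideways among the metric conditions.

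The missing step is the main content of the theorem. Given a tail-dense $\varphi$-like sequence in a single reflection $X^{M_0}$, one must manufacture a $\varphi$-like sequence in the full cube $[0,1]^{\omega_1}$, whose ultralimit map will then be the desired quotient onto $X$. The paper does this by a length-$\omega_1$ transfinite recursion along a continuous chain $M_0\prec M_1\prec\cdots$ of countable elementary submodels: at stage $\alpha$ the coordinates $q_\xi(n)$ are defined for $\xi<\delta_\alpha$ so that $\seq{\Delta_{\xi<\delta_\alpha}q_\xi(n)}{n}$ is $\varphi$-like in $[0,1]^{\delta_\alpha}$; at successor stages one uses $M_\alpha\prec M_{\alpha+1}$ to find, for each new ordinal $\xi\in[\delta_\alpha,\delta_{\alpha+1})$, a sequence of ``avatar'' ordinals $\zeta^m_\xi<\delta_\alpha$ which mimic the combinatorics of $\xi$ relative to $X$, $\psi_{p_0},\dots,\psi_{p_m}$, and a fixed nice open cover $\mathcal O_m$, and then defines $q_\xi(n)$ by diagonalizing over the already-constructed $q_{\zeta^m_\xi}(n)$. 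This is the dynamical substitute for the lifting lemma in the B\l aszczyk--Szyma\'nski proof of Parovi\v{c}enko's theorem; as the paper notes, the naive lifting lemma for flows is actually false, so the elementary-submodel machinery is doing real work and cannot be bypassed.

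A minor separate issue: your forward direction of (a) is garbled --- the set ``$\{n\}^{\mathrm{cl}}\cap D^*$'' is empty, since $n$ is isolated in $\beta D$. The paper handles this direction via the folklore fact that every continuous $F\colon D^*\to Z$ into a compact metric space has the form $\U\mapsto \U\text{-}\lim z_n$ for some sequence $\seq{z_n}{n\in D}$, and then reads off $\varphi$-likeness from the equivariance of $F$ using (the converse half of) Lemma~\ref{lem:psilike}.
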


As we will see in the following two sections, if we are given a flow $\psi$ and an action $\varphi: D \to D$, it is often very easy to decide when $(4)$ holds. Thus this theorem enables us to decide, with relative ease, when a weight-$\aleph_1$ flow is the quotient of a trivial flow on $\w^*$.

\begin{theorem}[the Main Theorem]\label{thm:main1}
Let $G$ be a countable discrete group, and consider the trivial flow $\varphi^*$ on the space $(G \times \w)^*$ induced by the natural action $\varphi$ of $G$ on $G \times \omega$, namely $\varphi_g(h,n) = (gh,n)$.
\begin{enumerate}
\item Every $G$-flow of weight $\leq\! \aleph_1$ is a quotient of $\varphi^*$.
\item Assuming \ch, $\varphi^*$ is a universal $G$-flow of weight $\leq\! \continuum$.
\end{enumerate}
\end{theorem}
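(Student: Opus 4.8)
The plan is to derive the Main Theorem from the Main Lemma by verifying condition (4) for an arbitrary $G$-flow of weight $\leq\!\aleph_1$. So let $\psi$ be a $G$-flow on a compact Hausdorff space $X$ of weight $\leq\!\aleph_1$, let $D = G \times \w$, and let $\varphi_g(h,n) = (gh,n)$ be the given action of $G$ on $D$. Note first that $\varphi$ is separately finite-to-one --- in fact each $\varphi_g$ is a bijection, since $G$ is a group --- so the Main Lemma applies. By the equivalence of (1) and (4) in the Main Lemma, it suffices to show that every metrizable quotient $Y$ of $\psi$ contains a tail-dense $\varphi$-like sequence $\seq{x_{(h,n)}}{(h,n) \in G \times \w}$. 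Fix such a metrizable quotient; it is a compact metric $G$-flow, and I will abuse notation and call the action on $Y$ also $\psi$.

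The key point is to build the sequence so that the ``$G$-coordinate'' $h$ of the index records which group element we have applied, while the ``$\w$-coordinate'' $n$ is used to ensure tail-density. Concretely: since $Y$ is a compact metric space, fix a countable dense set $\set{d_n}{n \in \w} \sub Y$ with each point repeated infinitely often (so that omitting finitely many indices still leaves a dense set). Enumerate $G$ as $\seq{g_k}{k \in \w}$ with $g_0 = 1_G$. Now I would like to set $x_{(h,n)} = \psi_h(d_n)$ for every $h \in G$ and $n \in \w$. Let us check the two required properties. Tail-density: for any finite $F \sub G \times \w$, the set $\set{x_{(1_G,n)}}{(1_G,n) \notin F} = \set{d_n}{n \notin F'}$ for some finite $F' \sub \w$, which is still dense in $Y$ since each $d_n$-value is hit infinitely often; hence the full sequence, which contains this subfamily, is tail-dense. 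The $\varphi$-like property: for each fixed $g \in G$ we must check $\psi_g(x_{(h,n)}) \approx_\e x_{\varphi_g(h,n)} = x_{(gh,n)}$ for all but finitely many $(h,n)$. But $\psi_g(x_{(h,n)}) = \psi_g(\psi_h(d_n)) = \psi_{gh}(d_n) = x_{(gh,n)}$ \emph{exactly}, for all $(h,n)$ --- here I use that $\psi$ is a genuine action, $\psi_g \circ \psi_h = \psi_{gh}$. So the sequence is not merely $\varphi$-like up to every $\e$; it satisfies the relation on the nose. This establishes (4), so by the Main Lemma $\psi$ is a quotient of $\varphi^*$, proving part (1).

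For part (2): assume \ch. Then every compact Hausdorff $G$-flow of weight $\leq\!\continuum = \aleph_1$ has weight $\leq\!\aleph_1$, so by part (1) it is a quotient of $\varphi^*$. It remains to observe that $\varphi^*$ itself is a $G$-flow of weight $\leq\!\continuum$: the space $(G \times \w)^* = \b(G \times \w) \setminus (G \times \w)$ has weight $\continuum$ (it is the remainder of a countable discrete space, so it has a base of size $\continuum = 2^{\aleph_0}$), and $\varphi^*$ is a continuous action by the discussion of trivial flows above. Hence $\varphi^*$ is universal for $G$-flows of weight $\leq\!\continuum$, as claimed.

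\textbf{Anticipated difficulties.} The genuinely hard content is entirely inside the Main Lemma, whose proof is postponed; given that lemma, the derivation above is essentially bookkeeping. The only place one must be slightly careful is the use of the hypothesis that $G$ is a \emph{group} rather than merely a semigroup: it guarantees that $\varphi$ is separately finite-to-one (indeed each $\varphi_g$ is invertible), which is required to even form $\varphi^*$ and to invoke the Main Lemma. The construction of the tail-dense $\varphi$-like sequence is then immediate because the sequence can be taken to be an honest orbit family $\seq{\psi_h(d_n)}{(h,n)}$, so the ``$\e$-approximation'' in Definition~\ref{def:philike} costs nothing here --- this is precisely why the group case is so much cleaner than the general semigroup case, where one cannot in general split $D$ into a ``group part'' and a ``density part'' and must work harder to produce the sequence.
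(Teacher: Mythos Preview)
Your proof is correct and follows exactly the paper's approach: apply the Main Lemma (condition (4)) and, for any metrizable quotient, build the $\varphi$-like sequence by setting $x_{(h,n)} = \psi_h(d_n)$ for a tail-dense sequence $\seq{d_n}{n \in \w}$, then observe that the identity-coordinate slice gives tail-density and the group law gives exact (not merely approximate) $\varphi$-likeness. Your write-up is somewhat more detailed than the paper's (you spell out tail-density carefully and add the remarks about part (2) and about why the group hypothesis matters), but the argument is the same; note that your enumeration $\seq{g_k}{k \in \w}$ is never used and can be dropped.
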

\begin{proof}
By Theorem~\ref{thm:main}, it suffices to show that every metrizable $G$-flow contains a tail-dense $\varphi$-like sequence of points.

Suppose $\psi: G \times X \to X$ is a metrizable $G$-flow. Let $\seq{d_n}{n \in \w}$ be a tail-dense sequence of points in $X$. Define a sequence $\seq{x_{(g,n)}}{(g,n) \in G \times \w}$ of points in $X$ by putting
$$x_{(g,n)} = \psi_g(d_n)$$
for all $g \in G$ and $n \in \w$. This sequence is obviously $\varphi$-like. Note that, if $e$ is the identity element of $G$, $x_{(e,n)} = d_n$ for all $n$; thus, by our choice of the $d_n$, this sequence is tail-dense.
\end{proof}

We will see in the next section that the proof of this result extends to the semigroup $(\N,+)$ as well. For the time being, non-discrete groups and uncountable groups both seem to be out of reach.

\begin{question}
Is it consistent to have a universal $(\R,+)$-flow of weight $\continuum$? Is the natural flow on $(\R \times \w)^*$ an example under \ch?
\end{question}

\subsection{A few comments on the proof}

By forgetting about groups and actions (i.e., applying the ``forgetful functor'' mapping the category of $G$-flows to the category of compact Hausdorff spaces, as in the proof of Theorem~\ref{thm:noflows}), the Main Theorem reduces to Parovi\v{c}enko's Theorem \cite{Parovicenko}, a classic result of set-theoretic topology:
\begin{itemize}
\item Every compact Hausdorff space of weight $\leq\! \aleph_1$ is a continuous image of $\w^* = \b\w \setminus \w$.
\end{itemize}

One may view Theorem~\ref{thm:main} as the ``dynamic version'' of Parovi\v{c}enko's theorem, its natural analogue in the category of $G$-flows, which identifies (under \ch) a natural universal object for the category. It is natural, then, that the proof of our main theorem shares some features with a proof of Parovi\v{c}enko's theorem.

Of the various proofs of Parovi\v{c}enko's theorem, ours is closest in spirit to that of B{\l}aszczyk and Szyma\'nski in \cite{B&S}. Their proof begins by writing a given compact Hausdorff space $X$ as a length-$\omega_1$ inverse limit of compact metrizable spaces: $X = \varprojlim \seq{X_\a}{\a < \w_1}$. They then construct a coherent transfinite sequence of continuous surjections $\pi_\a: \w^* \to X_\a$, and define $\pi: \w^* \to X$ to be the inverse limit of this sequence. The $\pi_\a$ are constructed recursively, using a variant of the following lifting lemma at successor stages:

\begin{lemma}\label{lem:easylift}
Let $Z$ and $Y$ be compact metric spaces, and let $f: Z \to Y$ be a continuous surjection. If $\pi_Y: \w^* \to Y$ is a continuous surjection, then it can be lifted to a continuous surjection $\pi_Z: \w^* \to Z$ such that $\pi_Y = f \circ \pi_Z$.
\end{lemma}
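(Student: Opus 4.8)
\textbf{Proof plan for Lemma~\ref{lem:easylift}.}

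The plan is to realize both $Z$ and $Y$ as inverse limits of finite discrete spaces and then to lift the given surjection $\pi_Y$ stage by stage, exploiting the fact that $\w^*$ has a wealth of clopen partitions refining any given one. Concretely, first I would fix compatible metrics and write $Y = \varprojlim\seq{Y_k}{k\in\w}$ and $Z = \varprojlim\seq{Z_k}{k\in\w}$, where each $Y_k$ and each $Z_k$ is finite and discrete, the bonding maps are surjective, and (after passing to a subsequence) the map $f$ is induced by a coherent family of surjections $f_k\colon Z_k\to Y_k$; this is the standard expansion of a continuous surjection between metric compacta as a limit of maps between finite approximations. Let $q^Y_k\colon Y\to Y_k$ and $q^Z_k\colon Z\to Z_k$ be the limit projections.

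Next I would build, recursively in $k$, a decreasing sequence of finite clopen partitions $\P_k$ of $\w^*$ together with surjections $\rho_k\colon \P_k\to Z_k$ such that the composite $\w^*\to\P_k\xrightarrow{\rho_k} Z_k\xrightarrow{f_k} Y_k$ agrees with $q^Y_k\circ\pi_Y$ (which, being a continuous map to a finite discrete space, is itself determined by a finite clopen partition of $\w^*$), and such that $f_k\circ\rho_k$ refines the partition coming from $\pi_Y$ in a coherent way with the bonding maps. The one point that requires care is surjectivity: given a cell $P\in\P_k$ on which $\pi_Y$ is constant with value $y\in Y_k$, I must be able to split $P$ into clopen pieces, one for each element of $f_k^{-1}(y)\sub Z_k$, all nonempty. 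This is exactly where the special structure of $\w^*$ enters: every nonempty clopen subset of $\w^*$ is itself homeomorphic to $\w^*$ (equivalently, every infinite subset of $\w$ can be partitioned into $|f_k^{-1}(y)|$ infinite pieces), so such a splitting always exists. Assembling the $\rho_k$ gives a continuous map $\pi_Z\colon\w^*\to\varprojlim Z_k = Z$ with $f\circ\pi_Z = \pi_Y$ by construction.

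Finally I would check that $\pi_Z$ is onto. Surjectivity of a map into an inverse limit of finite discrete spaces is equivalent to surjectivity of each composite $q^Z_k\circ\pi_Z = \rho_k$ composed with the quotient $\w^*\to\P_k$, together with the coherence needed to hit every thread; since each $\P_k\to Z_k$ was arranged to be onto and the partitions $\P_k$ refine each other compatibly over the bonding maps $Z_{k+1}\to Z_k$, a routine compactness argument ($\w^*$ is compact, the fibers $\pi_Z^{-1}(z)$ are closed and have the finite intersection property along any thread) shows every $z\in Z$ is attained. The main obstacle, and the only place the argument is not purely formal, is maintaining simultaneously the surjectivity of the $\rho_k$ and their coherence with the bonding maps while refining to accommodate $\pi_Y$; but because clopen subsets of $\w^*$ can always be subdivided into arbitrarily many (infinitely many, even) nonempty clopen pieces, there is always enough room to do this, and no genuine difficulty arises. (Alternatively, one can phrase the whole thing dually as a back-and-forth construction of an embedding of the Boolean algebra of clopen sets of $Z$ into $\pwmf$ extending the embedding dual to $\pi_Y$, using Parovi\v{c}enko's characterization of $\pwmf$; but the direct topological construction above is the cleanest.)
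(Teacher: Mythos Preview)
The paper does not give a proof of Lemma~\ref{lem:easylift}; it is quoted as a known ingredient in B{\l}aszczyk and Szyma\'nski's argument, so there is no proof in the paper to compare your proposal against.

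That said, your plan has a genuine gap at the very first step. Writing $Y = \varprojlim Y_k$ and $Z = \varprojlim Z_k$ with each $Y_k$ and $Z_k$ \emph{finite and discrete} is possible only when $Y$ and $Z$ are zero-dimensional: an inverse limit of finite discrete spaces is totally disconnected, so spaces like $[0,1]$ admit no such representation. Since the lemma is stated for arbitrary compact metric $Y$ and $Z$, the clopen-partition machinery you build on top of this representation never gets started in the general case. (Your alternative dual sketch via Parovi\v{c}enko's characterization has the same problem: the clopen algebra of $Z$ need not separate points.)

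The repair is not hard but requires an extra move. One route: first precompose with a continuous surjection $g\colon 2^\w \to Z$ from the Cantor set, reducing to the case where the \emph{domain} $Z$ is zero-dimensional; then, instead of the nonexistent finite discrete quotients of $Y$, use finite \emph{open covers} $\V_k$ of $Y$ of mesh $<1/k$. The sets $\pi_Y^{-1}(V)$, $V \in \V_k$, cover $\w^*$ and can be shrunk to a finite clopen partition (since $\w^*$ is compact and zero-dimensional), after which your recursion proceeds essentially as you wrote it. An alternative, more in the spirit of the paper's later Section~\ref{sec:proof}, is to bypass inverse limits entirely: use the folklore fact that every continuous $\w^* \to Y$ has the form $\U \mapsto \U\text{-}\lim y_n$ for some sequence $(y_n)$ in $Y$, and then build a tail-dense sequence $(z_n)$ in $Z$ with $(f(z_n))$ tail-similar to $(y_n)$.
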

\vspace{-1mm}
\begin{center}
\begin{tikzpicture}
\draw [fill=white,white] (-1,0) circle (5pt);  \node at (-1,0) {$Y$};
\draw [fill=white,white] (2,0) circle (5pt);  \node at (2,0) {$Z$};
\draw [fill=white,white] (2,2) circle (5pt);  \node at (2,2) {$\ \w^*$};
\draw[->] (1.65,0) -- (-.65,0); \node at (.5,-.25) {$f$};
\draw[->] (1.75,1.8) -- (-.7,.18); \node at (.5,1.3) {$\pi_Y$};
\draw[->,dashed] (2,1.68) -- (2,.3); \node at (2.3,1) {$\pi_Z$};
\end{tikzpicture}
\end{center}
In proving our Main Lemma, the first part of B{\l}aszczyk and Szyma\'nski's proof goes through: every $G$-flow of weight $\aleph_1$ is a length-$\w_1$ inverse limit of metrizable $G$-flows. However, we run into trouble with the analogue of Lemma~\ref{lem:easylift}: the analogous lemma for flows is false (see Example 3.4 in \cite{Brian}).

The key to getting around this problem is to build our inverse limit system using metrizable reflections. Specifically, suppose $\psi$ is a weight-$\aleph_1$ flow. If $\seq{M_\a}{\a < \w_1}$ is a continuous increasing chain of countable elementary submodels of $H$, then it gives rise to a sequence $\seq{\psi^{M_\a}}{\a < \w_1}$ of metrizable flows, and it is clear that
$\psi = \invlim_{\a < \w_1}\psi^{M_\a}$.
Furthermore, the elementarity between the models gives this inverse limit system a strong degree of coherence, and ultimately is the key that unlocks a workable substitute for Lemma~\ref{lem:easylift}.

This technique is by now old news. It was first used by Dow and Hart in \cite{Dow&Hart} to prove a ``connected version'' of Parovi\v{c}enko's theorem:
\begin{itemize}
\item Every continuum of weight $\leq\! \aleph_1$ is a continuous image of $\mathbb H^*$, the Stone-\v{C}ech remainder of the half-line $\mathbb H = [0,\infty)$.
\end{itemize}
Their techniques were first adapted to a dynamical setting by the author in \cite{Brian}, where it was proved that a weight $\leq\!\aleph_1$ dynamical system $(X,f)$ is a quotient of $(\w^*,\s)$ if and only if it is weakly incompressible. (Here $\s$ denotes the shift map, and $(X,f)$ is weakly incompressible if $f(\closure{U}) \not\sub U$ for every open $U$ with $\0 \neq U \neq X$.)
We should mentioned that the results in \cite{Brian} follow almost immediately from the Main Lemma (this is proved in the next section), and that the Dow-Hart theorem follows from the results in \cite{Brian} (a short, easy proof is given in Section 5 of \cite{Brian}). Thus the Main Lemma is stronger than the theorems in \cite{Brian} and \cite{Dow&Hart}, but the main idea of the proof is the same.

Several technical difficulties arise when applying this idea to arbitrary flows. The main achievement of the proof here is simply to overcome these difficulties and show that the technique can be applied in this very general setting. But this achievement is primarily technical and therefore not as exciting as some of the applications that motivated it. We will set these technicalities aside for now and instead turn to the applications: finding universal dynamical systems of various kinds, and classifying the universal automorphisms of $\pwmf$.

\section{Autohomeomorphisms of $\w^*$ (and automorphisms of $\pwmf$)}\label{sec:dynamics}

\subsection{Dynamical systems and Stone duality}

If $S = (\N,+)$, then every continuous $f: X \to X$ generates an $S$-flow, namely $(n,x) \mapsto f^n(x)$. Conversely, every $S$-flow on $X$ is generated in this way. Thus $(\N,+)$-flows are really the same thing as continuous self-maps. Similarly, $(\Z,+)$-flows are really the same thing as self-homeomorphisms.

\begin{definition}
A \emph{dynamical system} is a continuous function from a compact Hausdorff space to itself.
\end{definition}

The terminology we have employed for flows carries over naturally to dynamical systems, sometimes becoming simpler by virtue of this translation:

\begin{itemize}
\item Given two dynamical systems $f: X \to X$ and $g: Y \to Y$, $f$ is a \emph{quotient} of $g$ if there is a continuous surjection $\pi: Y \to X$ such that $\pi \circ g = f \circ \pi$. This is denoted $g \quotient f$.
\end{itemize}
This is more of a lemma than a definition: our claim is that $g \quotient f$ if and only if the corresponding $(\N,+)$-flow generated by $f$ is a quotient of the corresponding $(\N,+)$-flow generated by $g$. This is trivial to verify: it amounts to checking that if $\pi \circ g = f \circ \pi$ then $\pi \circ g^n = f^n \circ \pi$ for all $n$.
\begin{itemize}
\item Let $p: D \to D$ and let $\phi$ represent the $(\N,+)$-action on $D$ generated by $p$, namely $\phi_n = p^n$. If $f: X \to X$ is a metrizable dynamical system, then a sequence $\seq{x_n}{n \in D}$ of points in $X$ is \emph{$p$-like} if and only if it is $\phi$-like with respect to the flow generated by $f$.
\end{itemize}
Again, this is more of a lemma than a definition. It is easy to verify using the uniform continuity of $f$, and we omit the proof.

We are mainly interested in a specific kind of dynamical system: homeomorphisms from $\w^*$ to itself. Via Stone duality, results about autohomeomorphisms of $\w^*$ translate into results about automorphisms of $\pwmf$. In this section and the next we investigate some consequences of Theorem~\ref{thm:main} for autohomeomorphisms of $\w^*$ and automorphisms of $\pwmf$. We assume a basic familiarity with Stone duality, and refer the reader to \cite{johnstone} for more details.

A function $p: D \to D$ is a \emph{mod-finite permutation} of $D$ if there are cofinite subsets $A,B \sub D$ such that $p$ restricts to a bijection $A \to B$. In this case, $p$ induces an automorphism of $\mathcal{P}(D)/\mathrm{fin} \iso \pwmf$, which we denote $p\up$. Equivalenty, $p\up$ is the map defined by setting $p\up([A]) = [p(A)]$ for every $A \sub D$. This function is called the \emph{lifting} of $p$, and any automorphism of $\pwmf$ obtained in this way is called \emph{trivial}. Trivial automorphisms are dual to trivial autohomeomorphisms, in the sense that
\begin{itemize}
\item[$\circ$] An automorphism $\varphi$ of $\pwmf$ is trivial if and only if the dual autohomeomorphism $\varphi^{\mathrm{St}}$ is trivial.
\item[$\circ$] Moreover, if $p$ is a mod-finite permutation of $\w$ then $(p\up)^{\mathrm{St}} = p^*$ and $(p^*)^{\mathrm{St}} = p\up$.
\end{itemize}

Given two automorphisms $\g,\varphi$ of $\pwmf$, $\varphi$ embeds in $\g$ if there is a subalgebra $\AA$ of $\pwmf$ and an isomorphism $e: \pwmf \to \AA$ such that $e \circ \varphi = \g \circ e$. This is denoted $\varphi \embeds \g$. Embeddings of automorphisms are dual to quotients of autohomeomorphisms, in the sense that
\begin{itemize}
\item[$\circ$] If $f$ and $g$ are autohomeomorphisms of $\w^*$, then $g \quotient f$ if and only if $f^{\mathrm{St}} \embeds g^{\mathrm{St}}$. Conversely, if $\varphi$ and $\g$ are automorphisms of $\pwmf$, then $\varphi \embeds \g$ if and only if $\g^{\mathrm{St}} \quotient \varphi^{\mathrm{St}}$.
\item[$\circ$] Moreover, if $q$ is a quotient mapping from $g$ to $f$ then $q^{\mathrm{St}}$ is an embedding from $g^{\mathrm{St}}$ to $f^{\mathrm{St}}$, and if $e$ is an embedding from $\varphi$ to $\g$ then $e^{\mathrm{St}}$ is a quotient mapping from $\g^{\mathrm{St}}$ to $\varphi^{\mathrm{St}}$.
\end{itemize}

\vspace{1mm}
\begin{center}
\begin{tikzpicture}

\node at (0,0) {$\w^*$};
\node at (2,0) {$\w^*$};
\node at (2,2) {$\w^*$};
\node at (0,2) {$\w^*$};
\draw[->] (.35,2) -- (1.65,2); \node at (1,2.25) {$g$};
\draw[->] (.35,0) -- (1.65,0); \node at (1,-.25) {$f$};
\draw[->>] (2,1.68) -- (2,.3); \node at (2.2,1) {\small $q$};
\draw[->>] (0,1.68) -- (0,.3); \node at (-.2,1) {\small $q$};
\node at (1,-1.25) {\Large $g \quotient f$};

\node at (6,0) {$\pwmf$};
\node at (9,0) {$\pwmf$};
\node at (9,2) {$\pwmf$};
\node at (6,2) {$\pwmf$};
\draw[->] (8.05,2) -- (6.95,2); \node at (7.5,2.25) {$\g$};
\draw[->] (8.05,0) -- (6.95,0); \node at (7.5,-.25) {$\varphi$};
\draw[right hook->] (9,.35) -- (9,1.68); \node at (9.2,1) {\small $e$};
\draw[right hook->] (6,.35) -- (6,1.68); \node at (5.8,1) {\small $e$};
\node at (7.5,-1.25) {\Large $\varphi \embeds \g$};

\draw[double,double equal sign distance,-implies] (3.6,1) -- (4.5,1);
\draw[double,double equal sign distance,-implies] (4.4,1) -- (3.5,1);

\end{tikzpicture}
\end{center}

 \subsection{Universal dynamical systems and automorphisms}
 
 An automorphism of $\pwmf$ is \emph{universal} if every other automorphism embeds in it. We will call an automorphism universal with respect to some property $P$ if it has property $P$, and every other automorphism with property $P$ embeds in it.

Let $t$ be the permutation of $\w \times \Z$ given by $t(n,z) = (n,z+1)$.

\vspace{2mm}\begin{center}
\begin{tikzpicture}[style=thin]

\draw (-3,-2) node {\large$t$};
\draw (-1.2,-1.5) node {$\dots$};
\draw[fill=black] (0,-1.5) circle (1.5pt);
\draw[fill=black] (1,-1.5) circle (1.5pt);
\draw[fill=black] (2,-1.5) circle (1.5pt);
\draw[fill=black] (3,-1.5) circle (1.5pt);
\draw[fill=black] (4,-1.5) circle (1.5pt);
\draw (5.2,-1.5) node {$\dots$};
\draw[->] (-.5,-1.5) -- (-.15,-1.5);
\draw[->] (.15,-1.5) -- (.85,-1.5);
\draw[->] (1.15,-1.5) -- (1.85,-1.5);
\draw[->] (2.15,-1.5) -- (2.85,-1.5);
\draw[->] (3.15,-1.5) -- (3.85,-1.5);
\draw[->] (4.15,-1.5) -- (4.5,-1.5);
\draw (-1.2,-2.2) node {$\dots$};
\draw[fill=black] (0,-2.2) circle (1.5pt);
\draw[fill=black] (1,-2.2) circle (1.5pt);
\draw[fill=black] (2,-2.2) circle (1.5pt);
\draw[fill=black] (3,-2.2) circle (1.5pt);
\draw[fill=black] (4,-2.2) circle (1.5pt);
\draw (5.2,-2.2) node {$\dots$};
\draw[->] (-.5,-2.2) -- (-.15,-2.2);
\draw[->] (.15,-2.2) -- (.85,-2.2);
\draw[->] (1.15,-2.2) -- (1.85,-2.2);
\draw[->] (2.15,-2.2) -- (2.85,-2.2);
\draw[->] (3.15,-2.2) -- (3.85,-2.2);
\draw[->] (4.15,-2.2) -- (4.5,-2.2);
\draw (-1.2,-2.9) node {$\dots$};
\draw[fill=black] (0,-2.9) circle (1.5pt);
\draw[fill=black] (1,-2.9) circle (1.5pt);
\draw[fill=black] (2,-2.9) circle (1.5pt);
\draw[fill=black] (3,-2.9) circle (1.5pt);
\draw[fill=black] (4,-2.9) circle (1.5pt);
\draw (5.2,-2.9) node {$\dots$};
\draw[->] (-.5,-2.9) -- (-.15,-2.9);
\draw[->] (.15,-2.9) -- (.85,-2.9);
\draw[->] (1.15,-2.9) -- (1.85,-2.9);
\draw[->] (2.15,-2.9) -- (2.85,-2.9);
\draw[->] (3.15,-2.9) -- (3.85,-2.9);
\draw[->] (4.15,-2.9) -- (4.5,-2.9);
\draw (.3,-3.5) node {$\vdots$};
\draw (2,-3.5) node {$\vdots$};
\draw (3.7,-3.5) node {$\vdots$};

\end{tikzpicture}\end{center}\vspace{2mm}

\begin{theoremdual}\label{thm:main2}
Assuming \ch,
\begin{enumerate}
\item $t^*$ is universal for bijective dynamical systems of weight $\leq\!\continuum$.
\item $t\up$ is a universal automorphism.
\end{enumerate}
\end{theoremdual}
\begin{proof}
$(1)$ follows directly from Theorem~\ref{thm:main1} (Main Theorem 1) by setting $G = (\Z,+)$.
$(2)$ follows from $(1)$ via Stone duality.
\end{proof}

Part $(1)$ of this theorem can be strengthened as follows:

\begin{theorem}\label{thm:surjections}
Assuming \ch, $t^*$ is universal for surjective dynamical systems of weight $\leq\!\continuum$.
\end{theorem}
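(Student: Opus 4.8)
The plan is to reduce the surjective case to the bijective case --- which is precisely part~(1) of the Dual form of Theorem~\ref{thm:main1}, already proved under \ch --- by passing to the \emph{natural extension} (an inverse limit) of a surjective dynamical system. First observe that $t^*$ is itself a surjective dynamical system of weight $\continuum$, being an autohomeomorphism of $(\w \times \Z)^* \homeo \w^*$, so it is enough to show that every surjective dynamical system $f\colon X \to X$ with $w(X) \leq \continuum$ satisfies $t^* \quotient f$.

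Given such an $f$, I would form the inverse limit
$$\hat X = \invlim\big(X \xleftarrow{f} X \xleftarrow{f} X \xleftarrow{f} \cdots\big) = \set{(x_n)_{n \in \w} \in X^\w}{f(x_{n+1}) = x_n \text{ for all } n \in \w},$$
equipped with the shift map $\hat f\colon \hat X \to \hat X$, $\hat f\big((x_n)_n\big) = \big(f(x_0),x_0,x_1,\dots\big)$. Its inverse is the left shift $(x_n)_n \mapsto (x_{n+1})_n$, so $\hat f$ is an autohomeomorphism, i.e.\ a bijective dynamical system; and, as a closed subspace of the countable power $X^\w$, the space $\hat X$ is compact Hausdorff with $w(\hat X) \leq w(X^\w) \leq \continuum$. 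The content of the step is then two routine verifications: that surjectivity of $f$ makes the first-coordinate projection $\pi_0\colon \hat X \to X$ a continuous surjection (given $x_0 \in X$, recursively choose $x_{n+1}$ with $f(x_{n+1}) = x_n$), and that $\pi_0 \circ \hat f = f \circ \pi_0$ holds directly from the definition of $\hat f$; together these give $\hat f \quotient f$.

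To conclude, I would apply part~(1) of the Dual form of Theorem~\ref{thm:main1} to the bijective dynamical system $\hat f$, whose weight is $\leq \continuum$, to obtain $t^* \quotient \hat f$, witnessed by a continuous surjection $\rho\colon (\w \times \Z)^* \to \hat X$ with $\rho \circ t^* = \hat f \circ \rho$. Then $\pi_0 \circ \rho$ is a continuous surjection onto $X$ intertwining $t^*$ with $f$, so $t^* \quotient f$, as required. I do not anticipate a genuine obstacle: the argument is a clean reduction, the only mild point being the weight bound for $\hat X$, which is harmless since $\hat X$ is a countable inverse limit of copies of $X$. The reduction is carried out in \zfc; the Continuum Hypothesis enters only through the appeal to the already-established bijective universality of $t^*$.
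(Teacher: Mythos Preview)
Your proof is correct and takes a genuinely different route from the paper's. The paper applies the Main Lemma (Theorem~\ref{thm:main}) directly: it shows that any metrizable surjective $f\colon X \to X$ admits a tail-dense $t$-like sequence by fixing a tail-dense sequence $\langle d_n\rangle$ and, for each $n$, extending the forward orbit $f^z(d_n)$ ($z\geq 0$) to negative $z$ by recursively choosing preimages using surjectivity. Your argument instead packages this backward-extension step into the standard \emph{natural extension} $\hat X = \varprojlim(X,f)$, observes that $\hat f$ is a genuine autohomeomorphism with $w(\hat X)\leq\continuum$, and then invokes the already-established bijective case to get $t^*\quotient\hat f\quotient f$.

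The two arguments are closely related at heart---the paper's backward sequences $x_{n,-1},x_{n,-2},\dots$ are exactly coordinates of points in $\hat X$---but yours is a cleaner black-box reduction that avoids reopening the Main Lemma, while the paper's version illustrates how the Main Lemma is meant to be used in practice (building $\varphi$-like sequences by hand). Both are short; yours has the advantage of making clear that the surjective case adds no new content beyond the bijective one.
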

\begin{proof}
By Theorem~\ref{thm:main}, it suffices to prove that if $X$ is a compact metric space and $f: X \to X$ a continuous surjection, then $X$ contains a tail-dense $t$-like sequence of points.

Suppose $f: X \to X$ is a continuous surjection, with $X$ metrizable.
Let $\seq{d_n}{n \in \w}$ be a tail-dense sequence of points in $X$. For all $n \in \N$ and $z \geq 0$, define $x_{n,z} = f^z(d_n)$. For negative $z$, where $f^z(d_n)$ may consist of many points, we define $x_{n,z}$ by recursion. For fixed $n$, use backwards recursion on $z$ to choose a sequence $x_{n,-1},x_{n,-2},x_{n,-3},\dots$ of points in $X$ such that $f(x_{n,z}) = x_{n,z+1}$ for all $z < 0$. The sequence $\seq{x_{n,z}}{n \in \w, z \in \Z}$ defined in this way is clearly $t$-like. Because each $d_n$ appears in the sequence, it is tail-dense. 
\end{proof}

If we wish for something universal for all dynamical systems, not just the surjective ones, then $t^*$ will not work, because every quotient of a surjective function is surjective.
Let $u$ be the map on $\w \times \w$ given by $(m,n) \mapsto (m,n+1)$. 

\vspace{2mm}
\begin{center}
\begin{tikzpicture}[style=thin]

\draw (-2,-5.5) node {\large$u$};
\draw[fill=black] (0,-5) circle (1.5pt);
\draw[fill=black] (1,-5) circle (1.5pt);
\draw[fill=black] (2,-5) circle (1.5pt);
\draw[fill=black] (3,-5) circle (1.5pt);
\draw[fill=black] (4,-5) circle (1.5pt);
\draw (5.2,-5) node {$\dots$};
\draw[->] (.15,-5) -- (.85,-5);
\draw[->] (1.15,-5) -- (1.85,-5);
\draw[->] (2.15,-5) -- (2.85,-5);
\draw[->] (3.15,-5) -- (3.85,-5);
\draw[->] (4.15,-5) -- (4.5,-5);
\draw[fill=black] (0,-5.7) circle (1.5pt);
\draw[fill=black] (1,-5.7) circle (1.5pt);
\draw[fill=black] (2,-5.7) circle (1.5pt);
\draw[fill=black] (3,-5.7) circle (1.5pt);
\draw[fill=black] (4,-5.7) circle (1.5pt);
\draw (5.2,-5.7) node {$\dots$};
\draw[->] (.15,-5.7) -- (.85,-5.7);
\draw[->] (1.15,-5.7) -- (1.85,-5.7);
\draw[->] (2.15,-5.7) -- (2.85,-5.7);
\draw[->] (3.15,-5.7) -- (3.85,-5.7);
\draw[->] (4.15,-5.7) -- (4.5,-5.7);
\draw[fill=black] (0,-6.4) circle (1.5pt);
\draw[fill=black] (1,-6.4) circle (1.5pt);
\draw[fill=black] (2,-6.4) circle (1.5pt);
\draw[fill=black] (3,-6.4) circle (1.5pt);
\draw[fill=black] (4,-6.4) circle (1.5pt);
\draw[->] (.15,-6.4) -- (.85,-6.4);
\draw[->] (1.15,-6.4) -- (1.85,-6.4);
\draw[->] (2.15,-6.4) -- (2.85,-6.4);
\draw[->] (3.15,-6.4) -- (3.85,-6.4);
\draw[->] (4.15,-6.4) -- (4.5,-6.4);
\draw (5.2,-6.4) node {$\dots$};
\draw (.3,-7) node {$\vdots$};
\draw (2,-7) node {$\vdots$};
\draw (3.7,-7) node {$\vdots$};

\end{tikzpicture}\end{center}

\begin{theorem}
Assuming \ch, $u^*$ is universal for dynamical systems of weight $\leq\!\continuum$.
\end{theorem}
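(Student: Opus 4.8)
The plan is to invoke Theorem~\ref{thm:main} with $S = (\N,+)$ and the action $\varphi$ on $D = \w \times \w$ generated by the map $u$, so that $\varphi^* = u^*$. By the equivalence of $(1)$ and $(4)$ in the Main Lemma (and the remark following the definition of $p$-like sequences, identifying $u$-like sequences with $\varphi$-like sequences), it suffices to show that \emph{every} compact metric space $X$ equipped with a continuous self-map $f: X \to X$ — no longer assumed surjective — contains a tail-dense $u$-like sequence $\seq{x_{(m,n)}}{(m,n) \in \w \times \w}$. Recall that $u$-like means: for each fixed column the sequence is an asymptotic pseudo-orbit in the forward direction, i.e. $\mathrm{dist}(f(x_{(m,n)}), x_{(m,n+1)}) \to 0$ as $(m,n) \to \infty$ (more precisely, for each $\e>0$ this holds for all but finitely many pairs). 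Since the picture of $u$ is a disjoint union of forward rays $\{m\} \times \w$, with no backward constraint and no constraint linking distinct columns, the combinatorics here are even easier than in Theorem~\ref{thm:surjections}: we have total freedom to fill each column independently.

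The construction is then immediate. Fix a tail-dense sequence $\seq{d_m}{m \in \w}$ of points of $X$ (possible since $X$ is compact metric, hence separable). For each $m$, simply set $x_{(m,n)} = f^n(d_m)$ for all $n \in \w$; here $f^n$ makes sense for \emph{every} continuous $f$, surjective or not, since we only ever apply $f$ forward. Then $f(x_{(m,n)}) = f^{n+1}(d_m) = x_{(m,n+1)}$ exactly — the pseudo-orbit condition holds with zero error, so the sequence is $u$-like. Moreover $x_{(m,0)} = d_m$ for every $m$, so the sequence contains the tail-dense sequence $\seq{d_m}{m\in\w}$ as a subsequence indexed by the cofinite-complement-respecting set $\w \times \{0\}$; deleting finitely many indices $(m,n)$ from the doubly-indexed sequence still leaves cofinitely many of the $d_m$, so the full sequence is tail-dense. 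Thus condition $(4)$ of Theorem~\ref{thm:main} holds for $\psi$ the $(\N,+)$-flow generated by $f$, and therefore $f$ is a quotient of $u^*$; under \ch\ every weight-$\leq\!\continuum$ dynamical system has weight $\leq\!\aleph_1$, so this gives universality.

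There is essentially no obstacle: unlike the $t$ case, where negative powers forced a recursive choice of backward orbits, the map $u$ imposes only forward constraints, and the absence of surjectivity is precisely what is no longer a problem. The only points meriting a sentence of care are (i) checking that $\w \times \{0\}$ has cofinite complement within each "column-tail" in the sense needed for tail-density — which is automatic since removing a finite $F \sub \w \times \w$ removes only finitely many $d_m$'s — and (ii) noting that "weight $\leq\!\continuum$" collapses to "weight $\leq\!\aleph_1$" under \ch, so that Theorem~\ref{thm:main} applies verbatim. I would phrase the proof in three or four sentences mirroring the proof of Theorem~\ref{thm:surjections}, merely omitting the backward-recursion paragraph.
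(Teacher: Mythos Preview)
Your argument is correct and is essentially the same as the paper's. The paper's proof simply invokes Theorem~\ref{thm:main1} with $G=(\N,+)$ (noting the proof there goes through for this semigroup), and the proof of Theorem~\ref{thm:main1} is precisely your construction: take a tail-dense $\seq{d_m}{m\in\w}$ and set $x_{(m,n)}=f^n(d_m)$, which is exactly $\psi_g(d_n)$ in the paper's notation with the coordinates of $G\times\w$ swapped to match the definition of $u$.
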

\begin{proof}
This follows directly from Theorem~\ref{thm:main1} by setting $G = (\N,+)$ (see the comments following the proof, where it is pointed out that the theorem holds for $(\N,+)$, despite its not being a group).
\end{proof}

Turning back to automorphisms of $\pwmf$, it is natural to ask whether $t^\uparrow$ is the only one that is universal. In some sense of the word ``only'' the answer is no, as the following theorem shows.

\begin{theorem}\label{thm:universals}
Assuming \ch, there are $2^\continuum$ universal automorphisms of $\pwmf$.
\end{theorem}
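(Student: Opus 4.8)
The plan is to take the universal automorphism $t\up$ from the dualized Main Theorem and perturb it in $2^\continuum$ distinct ways that keep it universal. The key observation is that universality is a fairly robust property: by Theorem~\ref{thm:main} (via Stone duality), an automorphism $\g$ of $\pwmf$ is universal if and only if, for every compact metric space $X$ carrying a homeomorphism $f\colon X\to X$, the system $(X,f)$ contains a tail-dense $\varphi$-like sequence, where $\varphi$ is the $(\Z,+)$-action generated by the mod-finite permutation $p$ dual to $\g$. So the strategy is: find a $2^\continuum$-sized family of mod-finite permutations $\{p_i : i \in I\}$ of $\w$, each of which, like $t$, has the property that for every metric homeomorphism $(X,f)$ there is a tail-dense $p_i$-like sequence, but such that the resulting automorphisms $p_i\up$ are pairwise distinct (indeed pairwise non-conjugate, if we want a cleaner statement).

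First I would describe the building block. Recall $t$ is the permutation of $\w\times\Z$ with $t(n,z)=(n,z+1)$; its orbit structure is $\aleph_0$ copies of a $\Z$-chain, and the relevant feature used in the proof of the dual Main Theorem / Theorem~\ref{thm:surjections} is exactly that there are infinitely many bi-infinite $\Z$-orbits, which lets one fit $\psi_g(d_n)=x_{(g,n)}$ for a tail-dense $\seq{d_n}{n\in\w}$. The point is that \emph{any} mod-finite permutation $p$ of a countable set whose orbit structure contains infinitely many $\Z$-chains will admit tail-dense $p$-like sequences with respect to every metric homeomorphism, by the same argument: index the tail-dense sequence $\seq{d_n}{n\in\w}$ by the $\Z$-chains, set $x$ along the $n$-th chain to be the $f$-orbit of $d_n$, and for the finitely many non-$\Z$-chain orbits assign points arbitrarily — since these finitely many orbits are a finite set of indices, the ``all but finitely many'' clause in the definition of $p$-like is unaffected, and tail-density is retained because each $d_n$ still appears. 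So: every mod-finite permutation of $\w$ with infinitely many infinite orbits induces a universal automorphism.

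Next I would manufacture $2^\continuum$ essentially different such permutations. For each subset $A\sub\w$ (or each function $\w\to\w$, whichever is more convenient), build a mod-finite permutation $p_A$ of $\w$ whose orbit structure consists of infinitely many $\Z$-chains \emph{together with} a single additional orbit coded by $A$ — for instance, arrange a finite cycle of length $k$ for each $k$ in some set determined by $A$, or encode $A$ into the pattern of finite-cycle lengths. Each $p_A$ still has infinitely many infinite orbits, hence $p_A\up$ is universal by the previous paragraph. The remaining task is to see that $\{p_A\up : A\in\P(\w)\}$ has cardinality $2^\continuum$ — equivalently, that the map $A\mapsto p_A\up$ is not hugely many-to-one. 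Here I would argue that the cycle-type of the permutation part is an \emph{isomorphism invariant} of the pair $(\pwmf,p_A\up)$ (finite orbits of a trivial automorphism correspond to a recognizable algebraic feature, e.g. the Boolean-algebra elements that return to themselves under a power of $p_A\up$ and are ``small'' in an appropriate sense), so that distinct cycle-type patterns give non-isomorphic — in particular distinct — automorphisms. Since there are $\continuum$ possible ``patterns of finite cycle lengths'' as bare sequences but one can instead encode $A$ directly as a subset of the index set and read it back off, a cleaner route is: let the domain be $\w\times\w \sqcup \w$, put a $\Z$-chain on each $\{n\}\times\w$-like... — actually the simplest bookkeeping is to fix $2^\continuum$ pairwise non-conjugate permutations of $\w$ with infinitely many infinite orbits (e.g., by varying the multiset of finite-orbit lengths over the $\continuum$-many such multisets and then further splitting using which infinite orbits are $\Z$-like versus $\w$-like, giving $2^{\aleph_0}\cdot 2^{\aleph_0}$... ) and observe $2^{\continuum}$ is the upper bound anyway since $|\pwmf|=\continuum$ under \ch so there are at most $2^\continuum$ automorphisms.

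\textbf{Main obstacle.} The genuine difficulty is not the universality half — that is a routine adaptation of the tail-dense-sequence construction above — but rather producing $2^\continuum$ automorphisms that are provably \emph{distinct}, since the trivial automorphisms themselves number only $\continuum$ (a mod-finite permutation of $\w$ is coded by a real). The resolution must be that the universal automorphisms are \emph{not} all trivial: one takes a single trivial universal $\g_0=t\up$ and composes it with, or conjugates it by, a nontrivial automorphism of $\pwmf$ — under \ch there are $2^\continuum$ automorphisms of $\pwmf$ (this is classical, Rudin), and only $\continuum$ of them are trivial. The real content is then to check that conjugating a universal automorphism by an \emph{arbitrary} automorphism of $\pwmf$ yields a universal automorphism (immediate from the definition, since $\varphi\embeds\g$ iff $\varphi\embeds \sigma\g\sigma^{-1}$ for any automorphism $\sigma$, as embeddings compose with $\sigma$), and that conjugation by $2^\continuum$-many automorphisms produces $2^\continuum$-many \emph{distinct} results — i.e., that the centralizer of $t\up$ in the group of automorphisms of $\pwmf$ has index $2^\continuum$, equivalently has size $\leq\continuum$ or at any rate small. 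Pinning down that the centralizer of $t\up$ is small (plausibly: it is built from trivial pieces plus shifts, hence of size $\continuum$) is where the work lies, and I expect that to be the technical heart of the argument. Once that is in hand, the $2^\continuum$ conjugates $\sigma (t\up) \sigma^{-1}$ are distinct and each universal, and the count matches the trivial upper bound $2^\continuum$.
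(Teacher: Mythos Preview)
You correctly diagnose the obstacle: trivial automorphisms number only $\continuum$, so a direct construction of $2^\continuum$ universal \emph{trivial} automorphisms is impossible, and one must go nontrivial. Your proposed route---conjugate $t\up$ by the $2^\continuum$ automorphisms of $\pwmf$ available under \ch---does preserve universality, but the argument stalls at the step you yourself flag: you need the centralizer of $t\up$ in $\mathrm{Aut}(\pwmf)$ to have size at most $\continuum$, and you offer no proof of this. ``Plausibly built from trivial pieces plus shifts'' is not an argument, and in fact bounding centralizers of automorphisms of $\pwmf$ under \ch is not routine; nothing in the paper provides a tool for it. So as written there is a genuine gap.

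The paper sidesteps the centralizer question entirely by working on the topological side and using a different mechanism for producing many universal maps. Under \ch one has $\w^* \homeo (\w \times \Z \times 2^\continuum)^*$, where $2^\continuum$ is the weight-$\continuum$ Cantor cube regarded as a compact group. For each group element $g \in 2^\continuum$ the map $\varphi_g(n,z,x) = (n,z+1,gx)$ induces an autohomeomorphism $\varphi_g^*$ of $\w^*$; distinct $g$ visibly give distinct $\varphi_g^*$. The projection $(n,z,x) \mapsto (n,z)$ witnesses $\varphi_g^* \quotient t^*$ for every $g$, and since $t^*$ is already universal, transitivity of $\quotient$ makes each $\varphi_g^*$ universal. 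Stone duality finishes. The point is that ``has $t^*$ as a quotient'' is a much softer condition than ``is conjugate to $t^*$,'' and the distinctness of the $\varphi_g^*$ is immediate from their definition rather than from a centralizer computation.
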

\begin{proof}
By Theorem 1.2.6 in \cite{JvM}, \ch implies that $\w^*$ is homeomorphic to $(\w \times \Z \times 2^\continuum)^*$, where $2^\continuum$ denotes the weight-$\continuum$ Cantor cube. We may view $2^\continuum$ as a topological group. For each $g \in 2^\continuum$, the map $\varphi_g$ defined by
$$\varphi_g(n,z,x) = (n,z+1,gx)$$
induces an autohomeomorphism $\varphi_g^*$ of $(\w \times \Z \times 2^\continuum)^*$, and it is easy to see that distinct $g$ give rise to distinct autohomeomorphisms. For each $g \in 2^\continuum$, the map $q_g: \w \times \Z \times 2^\continuum \to \w \times \Z$ defined by $q_g(n,z,x) = (n,z)$ induces a continuous surjection $q_g^*: (\w \times \Z \times 2^\continuum)^* \to (\w \times \Z)^*$. For each $g \in 2^\continuum$, we have $q_g \circ \varphi_g = t \circ q_g$, and it follows that $q_g^* \circ \varphi_g^* = t^* \circ q_g^*$. Thus $\varphi_g^* \quotient t^*$ for all $g \in 2^\continuum$. But the $\quotient$ relation is clearly transitive, so by Theorem~\ref{thm:main2}, each $\varphi_g^*$ is universal for bijective dynamical systems of weight $\leq\!\continuum$.
By Stone duality, $\varphi_g^\uparrow$ is a universal automorphism of $\pwmf$ for all $g \in 2^\continuum$.
\end{proof}

Despite this result, one still wonders whether any two universal automorphisms are ``really'' different. Let us make this notion precise:

\begin{definition}$\ $
\begin{itemize}
\item Two autohomeomorphisms $f$ and $g$ of a space $X$ are \emph{isomorphic} if there is a third autohomeomorphism $h$ such that $h \circ f = g \circ h$.
\item Dually, two automorphisms $\varphi$ and $\g$ of $\pwmf$ are \emph{isomorphic} if there is a third automorphism $\eta$ such that $\eta \circ \g = \varphi \circ \eta$.
\end{itemize}
\end{definition}

\emph{A priori}, the isomorphism class of an automorphism might be as large as $2^\continuum$. Thus it is not clear from Theorem~\ref{thm:universals} whether there are any non-isomorphic universal automorphisms. Later in this section, we will characterize the universal trivial automorphisms precisely (under \ch). Then we will see that even among the trivial automorphisms, there are non-isomorphic universal maps.

Of course, there are many automorphisms that are not universal. The identity map is the most obvious example: it is anti-universal, in the sense that no other automorphism embeds in it. After characterizing the universal trivial automorphisms (under \ch), we will see that ``most'' of them (in the sense of Baire category) are not universal. On our way to this classification, we must analyze two other critical mod-finite permutations of $\w$.

\subsection{Universal automorphisms with additional properties}

In this section we analyze two other mod-finite permutations of $\w$, and show that their liftings are universal for automorphisms with certain properties.

Let $r$ denote the permutation of $\w$ consisting of infinitely many disjoint finite cycles, one cycle of size $n!$ for each $n \in \N$. Let $s$ denote the successor map $s(n) = n+1$. The maps $s^*$ and $s^\uparrow$ are both called the \emph{shift map}.

\vspace{2mm}\begin{center}
\begin{tikzpicture}[style=thin]

\draw (-3,2) node {\large$r$};
\draw[fill=black] (0,2) circle (1.5pt);
\draw[fill=black] (1.5,2.3) circle (1.5pt);
\draw[fill=black] (1.5,1.7) circle (1.5pt);
\draw[fill=black] (3,2) circle (1.5pt);
\draw[fill=black] (3.3,2.5) circle (1.5pt);
\draw[fill=black] (3.3,1.5) circle (1.5pt);
\draw[fill=black] (3.9,2.5) circle (1.5pt);
\draw[fill=black] (3.9,1.5) circle (1.5pt);
\draw[fill=black] (4.2,2) circle (1.5pt);
\draw (5.5,2) node {$\dots$};
\draw[->] (.08,2.05) .. controls (.35,2.22) and (.35,1.78) .. (.08,1.95);
\draw[->] (1.6,2.22) .. controls (1.7,2.1) and (1.7,1.9) .. (1.6,1.78);
\draw[->] (1.4,1.78) .. controls (1.3,1.9) and (1.3,2.1) .. (1.4,2.22);
\draw[->] (3,2.1) .. controls (3,2.2) and (3.15,2.4) .. (3.22,2.45);
\draw[->] (3.38,2.56) .. controls (3.5,2.64) and (3.7,2.64) .. (3.83,2.55);
\draw[->] (4,2.46) .. controls (4.05,2.45) and (4.18,2.3) .. (4.2,2.1);
\draw[->] (3.2,1.54) .. controls (3.15,1.55) and (3.02,1.7) .. (3,1.9);
\draw[->] (3.82,1.44) .. controls (3.7,1.36) and (3.5,1.36) .. (3.37,1.45);
\draw[->] (4.2,1.9) .. controls (4.2,1.8) and (4.05,1.6) .. (3.98,1.55);

\draw (-3,.3) node {\large$s$};
\draw[fill=black] (0,.3) circle (1.5pt);
\draw[fill=black] (1,.3) circle (1.5pt);
\draw[fill=black] (2,.3) circle (1.5pt);
\draw[fill=black] (3,.3) circle (1.5pt);
\draw[fill=black] (4,.3) circle (1.5pt);
\draw (5.2,.3) node {$\dots$};
\draw[->] (.15,.3) -- (.85,.3);
\draw[->] (1.15,.3) -- (1.85,.3);
\draw[->] (2.15,.3) -- (2.85,.3);
\draw[->] (3.15,.3) -- (3.85,.3);
\draw (4.15,.3) -- (4.5,.3);

\end{tikzpicture}\end{center}\vspace{2mm}

We adopt the convention that ``$A \subset B$'' means $A$ is a strict subset of $B$.

\begin{definition} Let $X$ be a zero-dimensional compact Hausdorff space, and let $f: X \to X$ be a dynamical system.
\begin{itemize}
\item $f$ is \emph{chain transitive} if for every clopen $U \sub X$ with $\0 \neq U \neq X$, $f(U) \not\sub U$.
\item $f$ is \emph{chain recurrent} if for every clopen $U \sub X$, $f(U) \not\subset U$.
\end{itemize}
Dually, if $\varphi$ is an automorphism of $\pwmf$ then
\begin{itemize}
\item $\varphi$ is \emph{chain transitive} if for every $a \in \pwmf$ with $[\0] \neq a \neq [\w]$, $\varphi(a) \not\leq a$.
\item $\varphi$ is \emph{chain recurrent} if for every $a \in \pwmf$, $\varphi(a) \not< a$.
\end{itemize}
\end{definition}

For now, we have stated the definitions of chain transitive/recurrent dynamical systems for zero-dimensional spaces only. Soon we will extend the definition to all dynamical systems, but first let us state the theorem to be proved concerning $s$ and $r$.

\begin{theorem}\label{thm:dynamics}
Assuming \ch,
\begin{enumerate}
\item $s^*$ is universal for chain transitive dynamical systems of weight $\leq\!\continuum$.
\item $r^*$ is universal for chain recurrent dynamical systems of weight $\leq\!\continuum$.
\end{enumerate}
By Stone duality, it follows that
\begin{enumerate}
\item $s^\uparrow$ is universal for chain transitive automorphisms of $\pwmf$.
\item $r^\uparrow$ is universal for chain recurrent automorphisms of $\pwmf$.
\end{enumerate}
\end{theorem}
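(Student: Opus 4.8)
The plan is to derive everything from the Main Lemma (Theorem~\ref{thm:main}) together with Stone duality. The two statements about $\pwmf$ are dual to the two about dynamical systems via the dictionary of this section (quotients of autohomeomorphisms correspond to embeddings of automorphisms, $(s^\uparrow)^{\mathrm{St}} = s^*$ and $(r^\uparrow)^{\mathrm{St}} = r^*$, and chain transitivity/recurrence of an automorphism was defined to be dual to that of its Stone dual), so it suffices to prove $(1)$ and $(2)$; and since \ch gives $\continuum = \aleph_1$, ``weight $\leq\!\continuum$'' can be replaced by ``weight $\leq\!\aleph_1$'' throughout, which is the setting of the Main Lemma. I would first dispose of the easy halves. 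A clopen subset of $\w^*$ has the form $[A]$ with $A \sub \w$; $s^*([A]) \leq [A]$ means $A+1 \subseteq^* A$, which forces $A$ to contain a tail of $\w$ and hence $[A] = [\w]$, so $s^*$ is chain transitive. Since $r$ is a disjoint union of cycles $C_n$ with $\card{C_n} = n!$, each permuted cyclically by $r$, the condition $r^*([A]) \leq [A]$ (i.e.\ $r[A] \subseteq^* A$) forces $r[A \cap C_n] = A \cap C_n$ for all but finitely many $n$, hence $r[A] =^* A$ and $r^*([A]) = [A]$; so $r^*$ is chain recurrent. (Both $s$ and $r$ are finite-to-one, so the Main Lemma applies to the $(\N,+)$-actions they generate.) I would also record the routine fact that chain transitivity and chain recurrence pass to quotients: if $q \circ g = f \circ q$ with $q$ a continuous surjection, a clopen (or open) set witnessing a failure for $f$ pulls back along $q$ to one witnessing a failure for $g$.

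For the substantive direction, fix a chain transitive (resp.\ chain recurrent) dynamical system $f$ of weight $\leq\!\aleph_1$. By the equivalence of $(1)$ and $(4)$ in the Main Lemma, $f$ is a quotient of $s^*$ (resp.\ $r^*$) as soon as every metrizable quotient of $f$ contains a tail-dense $s$-like (resp.\ $r$-like) sequence. Every such quotient is again chain transitive (resp.\ chain recurrent) by the previous paragraph, so it is enough to prove two purely metrizable statements: that every compact metric chain transitive dynamical system contains a tail-dense $s$-like sequence, and that every compact metric chain recurrent one contains a tail-dense $r$-like sequence. The first is essentially the metrizable content of \cite{Brian}: an $s$-like sequence is an asymptotic pseudo-orbit, and for compact metric $g$, chain transitivity is equivalent to the statement that any two points are joined by an $\e$-chain for every $\e > 0$; so, fixing a dense set $\{d_k : k \in \w\}$ with each $d_k$ listed infinitely often and concatenating $\e_k$-chains between consecutive listed points with $\e_k \to 0$, one obtains a tail-dense asymptotic pseudo-orbit.

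The genuinely new work is the $r$-like sequence, and here the point of the factorial cycle lengths comes in: a sequence indexed by $\w = \bigsqcup_n C_n$ is $r$-like up to $\e$ exactly when, for all but finitely many $n$, its restriction to $C_n$ in cyclic order is an $\e$-periodic pseudo-orbit of length $n!$. So, given a compact metric chain recurrent $g$, I must fill each cycle $C_n$ with a periodic pseudo-orbit of length \emph{exactly} $n!$, with errors tending to $0$ and with a dense set of points occurring infinitely often among the fillings. For each $m \in \w$: using compactness and uniform continuity, choose finitely many $p_1, \dots, p_N \in X$ such that every $x \in X$ is within the modulus-of-continuity distance (for error $2^{-m}$) of some $p_i$, and fix $2^{-m}$-periodic pseudo-orbits through the $p_i$ of lengths $\ell_1, \dots, \ell_N$; let $L_m = \mathrm{lcm}(\ell_1, \dots, \ell_N)$, arranged so that $L_0 \leq L_1 \leq \cdots \to \infty$. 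Then for every $n \geq L_m$ and every $x \in X$ the cycle $C_n$ can be filled by a $2^{-m}$-periodic pseudo-orbit of length exactly $n!$ through $x$: choose $p_i$ close to $x$, replace the basepoint of its length-$\ell_i$ loop by $x$ (legitimate up to error $2^{-m}$ by the choice of $p_i$ and uniform continuity), and traverse the resulting loop $n!/\ell_i$ times — valid because $\ell_i \leq L_m \leq n$ forces $\ell_i \mid n!$. Finally, enumerate the ``tasks'' $(d_k, j)$ (``make $d_k$ occur for the $j$th time''), process the cycles $C_{L_0}, C_{L_0+1}, \dots$ in order, and fill $C_n$ with a $2^{-m}$-periodic pseudo-orbit of length $n!$ through $d_k$, where $(d_k,j)$ is the next unfilled task and $m$ is largest with $L_m \leq n$; fill the finitely many $C_n$ with $n < L_0$ arbitrarily. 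The resulting sequence is tail-dense (each $d_k$ occurs in infinitely many cycles, so deleting finitely many terms keeps it dense) and $r$-like (for $\e = 2^{-m}$, every $C_n$ with $n \geq L_m$ was filled with error $\leq 2^{-m}$). The Main Lemma now yields $(1)$ and $(2)$, and Stone duality converts them into the two statements about $\pwmf$.

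I expect the main obstacle to be precisely the bookkeeping of this last construction: realizing periodic pseudo-orbits of the rigidly prescribed lengths $n!$ while simultaneously forcing the errors to $0$ and keeping a dense set of points recurrent. The reason the factorials (rather than an arbitrary sequence of cycle lengths) are the right choice is the divisibility fact that $n!$ absorbs every loop length $\ell_i$ arising at a fixed error scale once $n$ is large enough; the delicate part is getting the quantifiers in the right order — first fix the error scale $m$, extract the finite data and the bound $L_m$ from compactness, and only then choose $n \geq L_m$. A more bureaucratic obstacle is making the extension of ``chain transitive'' and ``chain recurrent'' to non-zero-dimensional spaces agree with the zero-dimensional definitions, be stable under quotients, and match the pseudo-orbit conditions above; this is routine once the zero-dimensional case is in hand.
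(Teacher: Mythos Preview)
Your proposal is correct and follows the same architecture as the paper: show that $s^*$ is chain transitive and $r^*$ is chain recurrent, that these properties pass to quotients, that metrizable systems with these properties contain tail-dense $s$-like (resp.\ $r$-like) sequences, and then apply the Main Lemma; Stone duality does the rest.

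The one place where you diverge from the paper is the construction of the tail-dense $r$-like sequence. The paper's version is considerably simpler: fix a tail-dense sequence $\langle d_k\rangle$, choose for each $k$ a single $\nicefrac{1}{k}$-chain from $d_k$ to itself of some length $n_k$ (arranged strictly increasing), and then fill each cycle $C_n$ with $n_k \leq n < n_{k+1}$ by repeating that one loop $n!/n_k$ times. Your version --- a finite net of basepoints at each error scale, an $\mathrm{lcm}$ bound $L_m$, basepoint replacement via uniform continuity, and a task queue to guarantee tail-density --- works, but the extra machinery buys nothing here: you never need to route a periodic pseudo-orbit through an \emph{arbitrary} point, only through the $d_k$, so one loop per error level suffices. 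Both arguments rest on the same divisibility observation that $\ell \mid n!$ once $\ell \leq n$, which is exactly why $r$ was chosen with factorial cycle lengths. (A minor bookkeeping point: your ``largest $m$ with $L_m \leq n$'' is only well-defined because you arranged $L_m \to \infty$; the paper sidesteps this by indexing error levels directly by the thresholds $n_k$.)
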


Theorem~\ref{thm:dynamics}$(1)$ is the main result of \cite{Brian}. The (straightforward) proof in this section therefore shows that the results in \cite{Brian} are a special case of Theorem~\ref{thm:main}.

We will prove Theorem~\ref{thm:dynamics} through a sequence of lemmas.

\begin{lemma}\label{lem:ct1}
$\ $
\begin{enumerate}
\item $s^*$ is chain transitive.
\item $r^*$ is chain recurrent. Moreover, if $\tilde r\!$ is any permutation of $\w$ consisting only of finite cycles, then $\tilde r^*\!$ is chain recurrent.
\end{enumerate}
\end{lemma}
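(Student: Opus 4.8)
The plan is to verify both parts directly from the definitions, working on the Stone side with clopen subsets of $\w^*$, which correspond to (equivalence classes of) infinite subsets of $\w$. Recall that a clopen subset of $\w^*$ is exactly $A^* = \closure{A} \setminus \w$ for some $A \sub \w$, that $A^* \sub B^*$ iff $A \sub^* B$, that $A^* = \0$ iff $A$ is finite, and that $(p^*)(A^*) = \closure{p(A)} \cap \w^* = (p(A))^*$ when $p$ is a mod-finite permutation. So in all cases the question ``$f(U) \sub U$'' for $U = A^*$ and $f = p^*$ reduces to the combinatorial question ``$p(A) \sub^* A$,'' and ``$f(U) \subset U$'' reduces to ``$p(A) \sub^* A$ and $A \setminus^* p(A)$ is infinite.''

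For $(1)$: I must show that for every $A \sub \w$ with $A$ and $\w \setminus A$ both infinite, $s(A) \not\sub^* A$, where $s$ is the successor map. Equivalently, $A + 1 \not\sub^* A$; equivalently, there are infinitely many $n$ with $n \in A$ but $n+1 \notin A$ — i.e., $A$ has infinitely many ``right endpoints.'' This is immediate: if only finitely many $n \in A$ had $n+1 \notin A$, then a tail of $A$ would be closed under successor, making a tail of $A$ equal to a tail of $\w$, contradicting that $\w \setminus A$ is infinite. Hence $s^*$ is chain transitive.

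For $(2)$: let $\tilde r$ be any permutation of $\w$ all of whose cycles are finite (this covers $r$ as the special case of cycles of length $n!$). I must show that for \emph{every} $A \sub \w$, $\tilde r(A) \not\subset^* A$, i.e., it is not the case that $\tilde r(A) \sub^* A$ with $A \setminus^* \tilde r(A)$ infinite. The key observation is that $\tilde r$, being a union of finite cycles, has the property that for any finite set $F$ contained in a union of cycles, $\tilde r(F) = F$ on each complete cycle; more usefully, $\tilde r$ restricted to the (finite) union of cycles meeting $A$ is a bijection of that set to itself. Concretely: suppose toward a contradiction that $\tilde r(A) \sub^* A$, so there is a cofinite $A' \sub A$ with $\tilde r(A') \sub A$. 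Write $A$ as a disjoint union of the pieces $A \cap C$ over the cycles $C$ of $\tilde r$. On any cycle $C$ with $C \sub A$, $\tilde r$ permutes $A \cap C = C$, contributing nothing to $A \setminus \tilde r(A)$. On any cycle $C$ with $A \cap C \neq \0$ but $C \not\sub A$ — call these the ``partial'' cycles — I claim $\tilde r(A) \cap C \neq A \cap C$ is possible, but since only finitely many elements of $A$ lie outside $A'$, all but finitely many partial cycles $C$ satisfy $\tilde r(A \cap C) \sub A \cap C$; as $\tilde r \rest C$ is a bijection of the finite set $C$, this forces $\tilde r(A \cap C) = A \cap C$ on those cycles. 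Thus $\tilde r(A)$ and $A$ agree on all but finitely many cycles, so $A \setminus \tilde r(A)$ is finite, i.e. $\tilde r(A) \not\subset^* A$. This shows $\tilde r^*$ is chain recurrent, and in particular $r^*$ is.

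The main obstacle is the bookkeeping in part $(2)$: one must be careful that ``$\tilde r(A) \sub^* A$'' only gives $\tilde r(A \cap C) \sub A \cap C$ for \emph{cofinitely many} cycles $C$ (those disjoint from the finitely many exceptional points), and then use finiteness of each cycle to upgrade inclusion to equality on those cycles. Once that is set up, the conclusion that $A \triangle \tilde r(A)$ is finite — hence $A \setminus \tilde r(A)$ is finite, hence $\tilde r(A) \not\subset^* A$ — is automatic. No use of \ch is needed for this lemma; it is a purely combinatorial fact about mod-finite permutations.
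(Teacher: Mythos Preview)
Your proof is correct and follows essentially the same approach as the paper's: both reduce the clopen conditions on $\w^*$ to combinatorial statements about subsets of $\w$, and for part (2) both use the key observation that on a finite cycle $C$, an inclusion $\tilde r(A \cap C) \sub A \cap C$ forces equality. The paper's proof is much terser (it simply asserts $\tilde r(A) \not\subset A$ and says the rest follows), whereas you have carefully worked out the mod-finite bookkeeping; but the underlying argument is the same.
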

\begin{proof}
For $(2)$, it is easy to see that $\tilde r(A) \not\subset A$ for any $A \sub \w$. It follows that $\tilde r^*(A^*) \not\subset A^*$ for any $A \sub \w$, so that $\tilde r^*\!$ is chain recurrent. $(1)$ is proved similarly; it also appears as Lemma 5.3 in \cite{WRB}. 
\end{proof}

\begin{lemma}\label{lem:ct2}
Both chain transitivity and chain recurrence are preserved by taking quotients.
\end{lemma}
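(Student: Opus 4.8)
The plan is to unwind the definitions and chase clopen sets along the quotient map. Suppose $g\colon Y\to Y$ has the relevant property and that $\pi\colon Y\to X$ is a continuous surjection with $\pi\circ g=f\circ\pi$, witnessing $g\quotient f$; the goal is to transfer the property to $f\colon X\to X$. The single fact about $\pi$ that does all the work is this: for any clopen $V\sub X$ the preimage $U=\pi^{-1}(V)$ is clopen, and since $\pi$ is onto we have $\pi(U)=V$, so $U=\0$ (resp.\ $U=Y$) precisely when $V=\0$ (resp.\ $V=X$).

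I would prove both halves by contraposition. For chain transitivity: if $f$ is not chain transitive, fix a clopen $V$ with $\0\neq V\neq X$ and $f(V)\sub V$, and put $U=\pi^{-1}(V)$, a clopen set with $\0\neq U\neq Y$. Then $\pi(g(U))=f(\pi(U))=f(V)\sub V$, whence $g(U)\sub\pi^{-1}(V)=U$; so $g$ is not chain transitive. For chain recurrence the computation is the same, with a small amount of extra care about strictness: if $f(V)\subset V$ for a clopen $V$, again set $U=\pi^{-1}(V)$ and note $\pi(g(U))=f(V)\subset V=\pi(U)$. This simultaneously yields $g(U)\sub U$ and excludes the possibility $g(U)=U$ (which would force $\pi(g(U))=\pi(U)=V$, contradicting $\pi(g(U))=f(V)\subset V$), so $g(U)\subset U$ and $g$ is not chain recurrent.

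There is no genuine obstacle here; the only points that need attention are the two uses of surjectivity of $\pi$ — first to see that the pullback of a proper nonempty clopen set remains proper and nonempty, and second to compute $\pi(\pi^{-1}(V))=V$ so that the intertwining identity $\pi\circ g=f\circ\pi$ carries forward-invariance, and its strictness, from $g$ down to $f$. Zero-dimensionality of $X$ and $Y$ enters only insofar as it is needed for the notions of chain transitivity and recurrence to be meaningful; beyond supplying the clopen sets $V$ and $\pi^{-1}(V)$, the argument makes no use of it.
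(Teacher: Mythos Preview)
Your argument is correct for zero-dimensional $X$ and $Y$, which is the only setting in which the clopen-set definitions you invoke have been introduced at this point in the paper. The paper itself gives no argument at all: it simply cites chapter~4 of Akin's book and remarks that the proof there does not actually use metrizability. So your approach is genuinely different in that it is self-contained and works directly from the clopen-set definitions rather than the $\O$-chain characterization.

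One caveat worth flagging: a few lines later the paper extends the definitions of chain transitivity and recurrence to arbitrary compact Hausdorff spaces via $\O$-chains (Lemma~\ref{lem:chains} and the sentence following it), and the lemma is then applied in that generality --- e.g., in the proof of Theorem~\ref{thm:dynamics}, where one needs that an arbitrary metrizable quotient of a chain transitive system is chain transitive. Your clopen-set argument does not cover that case as written. The fix is easy and in the same spirit: given an open cover $\O$ of $X$, pull it back to the open cover $\pi^{-1}\O$ of $Y$; any $\pi^{-1}\O$-chain in $Y$ pushes forward under $\pi$ to an $\O$-chain in $X$ (using $\pi\circ g=f\circ\pi$), and surjectivity of $\pi$ lets you choose the endpoints. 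This is presumably what Akin does, and it handles both properties in full generality.
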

\begin{proof}
This is proved in chapter 4 of \cite{akin}. The result is stated there for metrizable dynamical systems only, but the proof does not use this.
\end{proof}

Observe that these two lemmas suffice already for the ``only if'' direction of Theorem~\ref{thm:dynamics}. For the ``if'' direction, we will need alternative characterizations of chain transitivity and recurrence. Let us expand our ``$\approx_\e$'' notation for metric spaces to arbitrary spaces as follows. If $X$ is a topological space and $\O$ is an open cover of $X$, we write $x \approx_\O y$ to mean that $x,y \in U$ for some $U \in \O$. Notice that for a metric space, ``$\approx_\e$'' coincides with ``$\approx_\O$'' when $\O$ is the open cover consisting of all open sets of diameter $\leq \! \e$.

\begin{definition}
Let $f: X \to X$ be a dynamical system.
If $\O$ is an open cover of $X$, then an $\O$\textit{-chain from $a$ to $b$} is a sequence $\langle x_i : i \leq n \rangle$ of points in $X$ with $x_0 = a$, $x_n = b$, and $n \geq 1$, such that $f(x_i) \approx_{\O} x_{i+1}$ for every $i < n$. If $X$ is metrizable, an \emph{$\e$-chain} is defined to be an $\O$-chain where $\O$ is the open cover consisting of all open sets of diameter $\leq \! \e$
\end{definition}

Roughly, an $\O$-chain is an orbit that is computed with small errors at each step, the size of the errors being restricted by $\O$. The following lemma is well-known, at least for metrizable dynamical systems, and a proof can be found in chapter 4 of \cite{akin} (the proofs given there are for metrizable systems, but the reader can check that metrizability is never actually used; see also Lemma 5.2 in \cite{WRB}). 

\begin{lemma}\label{lem:chains}
Let $f: X \to X$ be a zero-dimensional dynamical system.
\begin{enumerate}
\item $f$ is {chain transitive} if and only if for every $a,b \in X$ and every open cover $\O$ of $X$, there is an $\O$-chain from $a$ to $b$.
\item $f$ is {chain recurrent} if and only if for every $a \in X$ and every open cover $\O$ of $X$, there is an $\O$-chain from $a$ to $a$.
\end{enumerate}
\end{lemma}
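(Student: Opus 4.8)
The plan is to prove all four implications by reducing, in every case, from arbitrary open covers to finite clopen partitions; this is the only place zero-dimensionality is used. Given an open cover $\O$ of $X$, compactness gives a finite subcover, and since $X$ has a basis of clopen sets we can refine that to a finite partition $\P$ of $X$ into clopen pieces, each contained in some member of $\O$. Any $\P$-chain is then automatically an $\O$-chain, so for the two ``if'' directions it suffices to produce, for every clopen partition $\P$, a $\P$-chain from $a$ to $b$ (resp.\ from $a$ to $a$); and for the two ``only if'' directions it suffices, starting from a clopen set witnessing the failure of chain transitivity (resp.\ recurrence), to exhibit a finite clopen partition, viewed as an open cover, admitting no such chain.

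For a clopen partition $\P$ and a point $a$, let $C_\P(a)$ be the set of $y$ for which there is a $\P$-chain from $a$ to $y$. Two elementary facts do most of the work. First, $C_\P(a)$ is a union of cells of $\P$: if $y\in C_\P(a)$ and $y'$ lies in the same cell as $y$, replacing the last point of a chain to $y$ by $y'$ still yields a chain, since the image of the penultimate point lies in the same cell as $y$ and hence as $y'$. In particular $C_\P(a)$ is clopen, and it is nonempty because the cell containing $f(a)$ is in it (witnessed by the chain $\langle a, f(a)\rangle$). Second, $C_\P(a)$ is forward-invariant: appending $f(y)$ to a chain ending at $y$ gives a chain ending at $f(y)$, so $f(C_\P(a))\subseteq C_\P(a)$.

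Now the ``if'' directions. If $f$ is chain transitive then the only clopen forward-invariant subsets of $X$ are $\emptyset$ and $X$; since $C_\P(a)$ is clopen, forward-invariant, and nonempty, it equals $X$, so $b\in C_\P(a)$. If $f$ is chain recurrent I must show instead that $a\in C_\P(a)$, and this is the one step with any content. Write $C=C_\P(a)$. Forward-invariance gives $C\subseteq f^{-1}(C)$, and $f^{-1}(C)$ is clopen with $f(f^{-1}(C))\subseteq C$. If $C\subsetneq f^{-1}(C)$, then $f^{-1}(C)$ is a clopen set whose image under $f$ is a strict subset of it, contradicting chain recurrence; hence $f^{-1}(C)=C$. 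Since $f(a)\in C$, we conclude $a\in f^{-1}(C)=C$, so there is a $\P$-chain from $a$ to $a$.

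For the ``only if'' directions I argue contrapositively. If $f$ is not chain transitive, fix clopen $U$ with $\emptyset\neq U\neq X$ and $f(U)\subseteq U$; for the clopen partition $\O=\{U,\,X\setminus U\}$ an easy induction shows every $\O$-chain that starts in $U$ stays in $U$, so no $\O$-chain joins a point of $U$ to a point of $X\setminus U$. If $f$ is not chain recurrent, fix clopen $U$ with $f(U)\subsetneq U$, pick $c\in U\setminus f(U)$, and --- using that $f(U)$ is closed (being compact) and $X$ is zero-dimensional --- choose a clopen $V$ with $c\in V\subseteq U$ and $V\cap f(U)=\emptyset$; for the clopen partition $\O=\{V,\,U\setminus V,\,X\setminus U\}$ one checks that an $\O$-chain from $c$ enters $U\setminus V$ at the first step and never leaves, hence never returns to $c$. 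In both cases this contradicts the existence of the required chains. The main obstacle is the chain-recurrence half of the ``if'' direction: the natural attempts (trying to show $C_\P(a)$ is its own forward image, or running a strongly-connected-component analysis on the cell graph of $\P$) stall because chain recurrence tolerates proper clopen invariant sets and forbids only strictly contracting ones --- the point is to realize that the strictly contracting clopen set one needs to rule out is $f^{-1}(C_\P(a))$, not $C_\P(a)$ itself. I would also be careful to verify the routine but load-bearing fact that every open cover of a zero-dimensional compact Hausdorff space is refined by a finite clopen partition subordinate to it.
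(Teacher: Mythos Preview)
Your proof is correct and complete. The paper itself does not prove this lemma at all: it simply cites chapter~4 of Akin's book and Lemma~5.2 of \cite{WRB}, remarking that the arguments there (stated for metrizable systems) go through without metrizability. So there is no ``paper's own proof'' to compare against; you have supplied what the paper omits.

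A couple of minor remarks. Your reduction to finite clopen partitions is exactly the right move and is the natural substitute for the $\varepsilon$-balls used in the metric case. In the chain-recurrent ``if'' direction, your observation that one should look at $f^{-1}(C_\P(a))$ rather than $C_\P(a)$ is the key step and is handled cleanly: from $f(C)\subseteq C$ you get $C\subseteq f^{-1}(C)$, and if this inclusion were strict then $f(f^{-1}(C))\subseteq C\subsetneq f^{-1}(C)$ would violate chain recurrence. In the ``only if'' direction for chain recurrence, you might note explicitly that the partition $\{V,\,U\setminus V,\,X\setminus U\}$ may have empty pieces (e.g.\ if $U=X$), but this is harmless since dropping empty cells still leaves a partition and the argument is unaffected.
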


This lemma explains the origin of the terms ``chain transitive'' and ``chain recurrent''. We also take parts $(1)$ and $(2)$ of this lemma as the definition of chain transitivity and chain recurrence, respectively, in the case that $X$ is not zero-dimensional.

\begin{lemma}\label{lem:chainz}
Let $X$ be a metric space and $f: X \to X$ a dynamical system.
\begin{enumerate}
\item $f$ is chain transitive if and only if $X$ contains a tail-dense $s$-like sequence.
\item $f$ is chain recurrent if and only if $X$ contains a tail-dense $r$-like sequence.
\end{enumerate}
\end{lemma}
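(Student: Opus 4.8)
\textbf{Proof proposal for Lemma~\ref{lem:chainz}.}

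The plan is to prove the two biconditionals by connecting the chain-characterizations of transitivity and recurrence from Lemma~\ref{lem:chains} to the existence of tail-dense $s$-like (resp. $r$-like) sequences, using the fact that in a \emph{compact} metric space an $s$-like sequence is exactly an asymptotic pseudo-orbit (as noted in the discussion after Definition~\ref{def:philike}). For the forward direction of $(1)$: assume $f$ is chain transitive. Fix a countable dense set $\{d_k : k \in \w\}$ in $X$. For each $m \in \w$ let $\O_m$ be the cover by open sets of diameter $\leq 2^{-m}$. Using Lemma~\ref{lem:chains}$(1)$ I would, stage by stage, concatenate $\O_m$-chains to visit $d_0, d_1, \dots, d_m$ in order (a chain from the current endpoint to $d_0$, then $d_0$ to $d_1$, etc.), and pass to finer covers $\O_{m+1}$ at later stages; listing the points of all these chains in order produces a single sequence $\seq{x_n}{n \in \w}$ along which $\mathrm{dist}(f(x_n), x_{n+1}) \to 0$, i.e. an $s$-like sequence, and since every $d_k$ appears infinitely often (it is revisited at every sufficiently large stage) the sequence is tail-dense. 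For the converse of $(1)$: given a tail-dense $s$-like sequence $\seq{x_n}{n}$ and a cover $\O$, pick $\e$ with every $\e$-ball inside some member of $\O$; tail-density gives indices $i<j$ with $x_i$ near $a$ and $x_j$ near $b$, and the $s$-like condition makes $x_i, x_{i+1}, \dots, x_j$ (with the endpoints replaced by $a,b$) into an $\O$-chain from $a$ to $b$ once we are far enough out along the tail that the asymptotic errors are below $\e$. Hence $f$ is chain transitive by Lemma~\ref{lem:chains}$(1)$.

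For $(2)$ the structure is the same but one must respect the combinatorics of $r$: recall $r$ is a permutation of $\w$ that is a disjoint union of finite cycles, with exactly one cycle $C_n$ of length $n!$ for each $n$. So an $r$-like sequence decomposes into countably many finite ``approximate cycles'': for each $n$, the points indexed by $C_n$ form a sequence $y_0, \dots, y_{n!-1}$ with $\mathrm{dist}(f(y_i), y_{i+1}) \to 0$ and $\mathrm{dist}(f(y_{n!-1}), y_0) \to 0$ as $n \to \infty$ — that is, for large $n$ the $n$-th block is an approximate periodic orbit. To build such a sequence from chain recurrence: enumerate a dense set $\{d_k\}$, and for each $n$ use Lemma~\ref{lem:chains}$(2)$ to produce an $\O_n$-chain from $d_n$ back to $d_n$; pad it out to length exactly $n!$ if necessary by repeating the point $d_n$ at the end (which only adds errors $\mathrm{dist}(f(d_n), d_n)$ a bounded number of times — wait, this is the subtle point, see below), and use this padded chain as the $n$-th block. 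Every $d_k$ appears in block $k$, so the resulting sequence is tail-dense, and the blocks become better and better approximate cycles, so it is $r$-like. Conversely, from a tail-dense $r$-like sequence and a cover $\O$ with associated $\e$: pick a large $n$ so that block $n$ has all its consecutive errors (including the wrap-around) below $\e$, and so that some $d$-point near $a$, hmm — actually one wants a chain from $a$ to $a$, so use tail-density to find a late block containing a point $x$ with $\mathrm{dist}(x,a)$ small; the approximate-cycle structure of that block then yields an $\O$-chain from $a$ to $a$, giving chain recurrence via Lemma~\ref{lem:chains}$(2)$.

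The one genuine obstacle is the padding/length-matching in the construction of the $r$-like sequence: a single $\O_n$-chain from $d_n$ to $d_n$ has some length $\ell_n \geq 1$, but the index set forces block $n$ to have length exactly $n!$. One cannot simply repeat $d_n$ to fill up, because each repetition contributes an error roughly $\mathrm{dist}(f(d_n), d_n)$, which need not be small, and there could be up to $n! - \ell_n$ such repetitions. The fix is to instead fill block $n$ by \emph{concatenating many copies} of the chosen $\O_n$-chain (which returns to $d_n$, so concatenation is legal and introduces no new errors beyond $\O_n$-sized ones) together with, if $\ell_n \nmid n!$, a small number of extra copies of a \emph{finer} chain — or, more cleanly, first choose the chain to have length dividing $n!$, which is possible since from any $\O$-chain from $a$ to $a$ of length $\ell$ one gets one of length $k\ell$ for every $k\geq 1$ by repetition, and $\ell \mid \ell \cdot (n!/\gcd\text{-adjustment})$; concretely, take the chain of length $\ell_n$, repeat it $\lceil n!/\ell_n\rceil$ times to overshoot, then note we actually have freedom to pick $\O_n$-chains of \emph{any} length $\geq \ell_n$ (pad by inserting an extra $\O_n$-chain segment from $d_n$ to $d_n$, available again by Lemma~\ref{lem:chains}$(2)$), so we can hit length exactly $n!$ as long as $n! \geq \ell_n$, which holds for all large $n$; the finitely many small $n$ are irrelevant to the tail. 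I expect this bookkeeping to be the only place the proof needs care; everything else is a routine translation between $\e$-chains and asymptotic pseudo-orbits, exactly as in the metrizable theory of \cite{akin}.
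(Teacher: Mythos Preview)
Your argument for (1), and for the converse direction of (2), is essentially the paper's. The gap is exactly where you suspect, in the forward direction of (2): your padding argument does not go through. Repetition of a length-$\ell_n$ loop from $d_n$ to $d_n$ yields only the multiples $k\ell_n$, and ``inserting an extra $\O_n$-chain segment from $d_n$ to $d_n$'' adds some further length $m\geq 1$ over which you have no control; the achievable lengths are the numerical semigroup generated by the available loop-lengths at $d_n$, and there is no reason this semigroup contains $n!$. (For a concrete obstruction, take $X=\{0,1\}$ with $f$ the swap: for small $\e$ every $\e$-loop at $0$ has even length.) Nor is the inequality $n!\geq \ell_n$ for large $n$ automatic, since $\ell_n$ is the length of a $2^{-n}$-chain and carries no a priori bound in $n$.

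The paper's fix is to decouple the block index from the dense-point index and exploit a divisibility trick. Fix for each $k$ a $\nicefrac{1}{k}$-chain from $d_k$ to itself of some length $n_k$, arranging (by repeating chains if necessary) that $k\mapsto n_k$ is strictly increasing. Then fill the length-$n!$ cycle not with a chain for $d_n$, but with the chain for $d_k$ where $n_k\leq n<n_{k+1}$. The point is that $n_k\leq n$ forces $n_k\mid n!$, so the length-$n_k$ loop repeats exactly $n!/n_k$ times with no remainder and no padding is needed at all. Tail-density holds because each $d_k$ appears in block $n_k$ (and the $d_k$ were chosen tail-dense), and the sequence is $r$-like because the error in block $n$ is at most $\nicefrac{1}{k}$ with $k\to\infty$ as $n\to\infty$. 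Your instinct that the length-matching is the only real issue is right; this divisibility observation is the missing idea.
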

\begin{proof}
For $(1)$, the forward direction is proved by Bowen in \cite{bowen}. The idea is to fix a tail-dense sequence $\seq{d_n}{n \in \w}$ of points in $X$, and then to expand this sequence by connecting $d_n$ to $d_{n+1}$ with a $\nicefrac{1}{n}$-chain.

For the converse direction, suppose $(X,f)$ is a metrizable dynamical system and that $\seq{x_n}{n \in \w}$ is a tail-dense $s$-like sequence of points in $X$. Let $\e > 0$, and fix $N \in \N$ such that $f(x_n) \approx_\e x_{n+1}$ for all $n \geq N$. Let $a,b \in X$. Because every tail of $\seq{x_n}{n \in \w}$ is dense, there is some $m \geq N$ and some $n \geq m$ such that $f(a) \approx_\e x_m$ and $b \approx_\e x_n$. Then
$$\<a,x_m,x_{m+1},\dots,x_{n-2},x_{n-1},b\>$$
is an $\e$-chain from $a$ to $b$. 

The proof of $(2)$ is similar, but slightly more invovled. For the forward direction, let $f: X \to X$ be a chain recurrent dynamical system, with $X$ metrizable, and let $\seq{d_k}{k \in \w}$ be a tail-dense sequence of points in $X$. For every $k \in \N$, fix $n_k \in \N$ such that there is a $\nicefrac{1}{k}$-chain
$$\<d_k,x^k_1,x^k_2,\dots,x^k_{n_k-2},x^k_{n_k-1},d_k\>$$
of length $n_k+1$ from $d_k$ to itself (and set $x^k_0 = d_k$ for convenience). Furthermore, suppose that the function $k \mapsto n_k$ is strictly increasing. This assumption sacrifices no generality, because we can always increase the length of our chain by repeating it if necessary:
$$\<d_k,x^k_1,x^k_2,\dots,x^k_{n_k-2},x^k_{n_k-1},d_k,x^k_1,x^k_2,\dots,x^k_{n_k-2},x^k_{n_k-1},d_k\>.$$

For convenience, we take $D = \bigcup_{n \in \N}\{n\} \times n!$ to be the domain of $r$, with $r(n,m) = (n,m+1)$ and with the addition understood modulo $n!$.
For $n \in \N$ and $m < n!$, define
\[
x_{n,m} = 
\begin{cases} 
\text{any point} & \text{ if } n < n_1, \\
x^k_{m\,(\text{mod }n_k)} & \text{ if }n_k \leq n < n_{k+1} \text{ and } 0 \leq m < n!.
\end{cases}
\]
In other words, we define our $r$-like sequence by mapping the cycle of length $n!$ onto the $\nicefrac{1}{k}$-chain from $d_k$ to itself whenever $n_k \leq n < n_{k+1}$.

Notice that if $n_k \leq n < n_{k+1}$, then $n_k$ divides $n!$, so that
$$\<x_{n,0},x_{n,1},x_{n,2},\dots,x_{n,n!-2},x_{n,n!-1},x_{n,0}\>$$
is a $\nicefrac{1}{k}$-chain from $d_k$ to itself; it is just the $\nicefrac{1}{k}$-chain we began with, repeated $\nicefrac{n!}{n_k}$ times. 
If $\e > 0$, fix $k$ such that $\nicefrac{1}{k} < \e$; then $f(x_{n,m}) \approx_\e x_{n,m+1}$ for all $n \geq n_k$. Thus the sequence $\seq{x_{n,m}}{n \in \N,m < n!}$ is $r$-like. It is clear that the sequence is also tail-dense, because it contains all the $d_k$. 

For the converse direction, suppose $f: X \to X$ is a dynamical system and $\seq{x_{n,m}}{n \in \N, m < n!}$ is a tail-dense $r$-like sequence of points in $X$. Let $\e > 0$ and let $a \in X$; we must find an $\e$-chain from $a$ to itself. Let $\dlt > 0$ be small enough that
\begin{itemize}
\item if $x \approx_{\dlt} y \approx_{\dlt} z$, then $x \approx_{\e} z$, and
\item if $x \approx_{\dlt} y$ and $f(y) \approx_{\dlt} z$, then $f(x) \approx_{\e} z$.
\end{itemize} 
Fix $N \in \N$ such that $f(x_{n,m}) \approx_\dlt x_{n,m+1}$ for every $n \geq N$, where the addition is understood modulo $n!$. Because every our sequence is tail-dense, there is some $n \geq N$ and $m < n!$ such that $a \approx_{\dlt} x_{n,m}$. It follows from our choice of $\dlt$ that
$$\<a,x_{n,m+1},x_{n,m+1}, \dots , x_{n,m+n!-1}, a\>$$
is an $\e$-chain from $a$ to $a$.
\end{proof}

\begin{proof}[Proof of Theorem~\ref{thm:dynamics}]
For $(1)$, the ``only if'' part follows from Lemmas \ref{lem:ct1}$(1)$ and \ref{lem:ct2}$(1)$. For the ``if'' part, Lemma~\ref{lem:chainz}$(1)$ shows that every metrizable chain transitive dynamical system contains a tail-dense $s$-like sequence. If $(X,f)$ is chain transitive then so is each of its metrizable quotients by Lemma~\ref{lem:ct1}$(1)$, so the theorem follows from Theorem~\ref{thm:main}. $(2)$ is proved the same way.
\end{proof}

One may easily check that the proof of Theorem~\ref{thm:dynamics} goes through, with only minor modifications, for $s^{-1}$ instead of $s$. Thus $s\up$ and $(s\up)^{-1}$ both are universal chain transitive automorphisms of $\pwmf$ (under \ch). No other trivial maps are chain transitive, up to re-indexing the domain of $s$ or $s^{-1}$ (this is Lemma 5.5 in \cite{WRB}).

Let us also observe that the proof of Theorem~\ref{thm:dynamics} still works if $r$ is replaced with any permutation $\tilde r\!$ such that
\begin{itemize}
\item $\tilde r\!$ decomposes into a union of finite cycles, and
\item for each $k \in \N$ all but finitely many of periods of these cycles are divisible by $k$.
\end{itemize}

\subsection{Every trivial automorphism embeds in its inverse}

In this subsection, we will break briefly from classifying the universal trivial automorphisms to observe an interesting consequence of Theorem~\ref{thm:dynamics}$(2)$.

\begin{corollary}\label{cor:inverses}
Assuming \ch, $(s^*)^{-1}$ is a quotient of $s^*$.
\end{corollary}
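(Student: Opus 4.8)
The plan is to deduce this directly from Theorem~\ref{thm:dynamics}(1), which says that (under \ch) $s^*$ is universal for chain transitive dynamical systems of weight $\leq\!\continuum$. So it is enough to check two things about $(s^*)^{-1}$, viewed as a dynamical system on $\w^*$: that it has weight $\leq\!\continuum$, and that it is chain transitive. The first is immediate, since $\w^*$ has weight $\continuum$. Thus the whole content of the proof is the verification that $(s^*)^{-1}$ is chain transitive, and once that is in hand Theorem~\ref{thm:dynamics}(1) gives that $(s^*)^{-1}$ is a quotient of $s^*$, which is exactly the assertion.

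To see that $(s^*)^{-1}$ is chain transitive, I would observe that chain transitivity passes from a self-homeomorphism of a zero-dimensional compact Hausdorff space to its inverse. Since $\w^*$ is zero-dimensional, I can use the clopen formulation of chain transitivity. Suppose $U\sub\w^*$ is clopen with $\0\neq U\neq\w^*$ and, aiming for a contradiction, that $(s^*)^{-1}(U)\sub U$. Applying the homeomorphism $s^*$ to both sides yields $U\sub s^*(U)$, and taking complements (using that $s^*$ is a homeomorphism, so $\w^*\setminus s^*(U)=s^*(\w^*\setminus U)$) gives $s^*(\w^*\setminus U)\sub\w^*\setminus U$. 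But $\w^*\setminus U$ is a clopen set with $\0\neq\w^*\setminus U\neq\w^*$, so this contradicts the chain transitivity of $s^*$ recorded in Lemma~\ref{lem:ct1}(1). Hence no such $U$ exists, i.e., $(s^*)^{-1}$ is chain transitive.

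Equivalently, one can phrase this as the Stone dual of the remark just before the corollary: $(s\up)^{-1}$ is a universal chain transitive automorphism of $\pwmf$, and $(s^*)^{-1}$ is itself chain transitive, so $(s^*)^{-1}$ is a quotient of $s^*$. There is no genuine obstacle here; the only point requiring a little care is the bookkeeping in the complementation step, and the (harmless) use of the zero-dimensional form of the definition of chain transitivity, which is legitimate because $\w^*$ is zero-dimensional. Note also that \ch is used only through the invocation of Theorem~\ref{thm:dynamics}.
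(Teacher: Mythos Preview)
Your proof is correct and follows essentially the same approach as the paper: invoke Theorem~\ref{thm:dynamics}(1) and verify that $(s^*)^{-1}$ is chain transitive. The only cosmetic difference is that you verify chain transitivity of the inverse via the clopen definition and a complementation argument, whereas the paper uses the chain characterization (Lemma~\ref{lem:chains}) and observes that reversing an $\O$-chain for $s^*$ gives an $\O$-chain for $(s^*)^{-1}$; both arguments are straightforward and equivalent.
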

\begin{proof}[Proof]
It is enough to show that $(s^*)^{-1}$ is chain transitive. But we already know that $s^*$ is chain transitive, and in general an invertible map is chain transitive if and only if its inverse is (any two points can still be connected by a chain -- the chains just run in the opposite direction).
\end{proof}

This corollary was first observed in Section 5 of \cite{Brian}, where it was also proved that the assumption of \ch cannot be dropped: under \ocama, the shift map is not a quotient of its inverse (see Theorem 5.7 in \cite{Brian}).

Corollary~\ref{cor:inverses} constitutes some small progress on what seems to be a difficult open question:

\begin{question}\label{q:shiftmap}
Is it consistent that the shift map $s^\uparrow$ is isomorphic to its inverse? Does it follow from \ch?
\end{question}

For more partial progress on this question, see \cite{geschke}. In this subsection, we will see that Corollary~\ref{cor:inverses} extends to every trivial map under \ch, and that the answer to Question~\ref{q:shiftmap} may tell us something about all of them.

\begin{theorem}\label{thm:inverses}
Assuming \ch,
\begin{enumerate}
\item every trivial autohomeomorphism of $\w^*$ is a quotient of its inverse.
\item every trivial automorphism of $\pwmf$ embeds in its inverse.
\end{enumerate}
Furthermore, if \ch holds and the answer to Question~\ref{q:shiftmap} is positive, then
\begin{enumerate}
\item every trivial autohomeomorphism of $\w^*$ is isomorphic to its inverse.
\item every trivial automorphism of $\pwmf$ is isomorphic to its inverse.
\end{enumerate}
\end{theorem}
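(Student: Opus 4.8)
The plan is to reduce everything to the case of the shift map that was handled in Corollary~\ref{cor:inverses} and Theorem~\ref{thm:dynamics}$(2)$, using the structure theory of mod-finite permutations of $\w$. First I would recall the classification of permutations of a countable set up to conjugacy (equivalently, up to the kind of re-indexing of the domain that induces isomorphism of the lifted automorphism): a mod-finite permutation $p$ of $\w$ decomposes, after discarding a finite set, into a disjoint union of cycles, where each cycle is either a finite cycle (of some length $k \in \N$) or a copy of $(\Z,+1)$. The data of $p$ up to this equivalence is therefore just a function from $\N \cup \{\infty\}$ to cardinals $\leq \aleph_0$ recording how many cycles of each ``length'' occur. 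The key observation is that reversing a cycle — whether finite or a $\Z$-copy — does not change its isomorphism type as a single-orbit permutation, since a finite cycle is conjugate to its reverse and $(\Z,+1)$ is conjugate to $(\Z,-1)$ via $z \mapsto -z$. Hence $p$ and $p^{-1}$ have exactly the same cycle data, so $p^{-1}$ is isomorphic to $p$ as a permutation of a countable set, and consequently $p^\uparrow$ and $(p^\uparrow)^{-1} = (p^{-1})^\uparrow$ are always isomorphic up to re-indexing.

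Once that bookkeeping is in place, the theorem follows from the Main Lemma together with the chain-transitivity/chain-recurrence machinery. For part (1), I would argue: given a trivial autohomeomorphism $f = p^*$ of $\w^*$, I want to show $f^{-1} \quotient f$, i.e.\ (by Theorem~\ref{thm:main}, applied with $S = (\Z,+)$ or rather to the metrizable reflections of the $\Z$-flow generated by $f$) that every metrizable quotient of the flow generated by $f^{-1}$ contains a tail-dense sequence that is $p^{-1}$-like with respect to the relevant flow. But a metrizable quotient $(Y,g^{-1})$ of $(\w^*, f^{-1})$ is the same as a metrizable quotient $(Y,g)$ of $(\w^*,f)$ with the action run backwards. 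Since $p^*$ is chain recurrent when $p$ consists of finite cycles (Lemma~\ref{lem:ct1}$(2)$) and chain transitivity/recurrence is quotient-preserved (Lemma~\ref{lem:ct2}) and insensitive to inverting an invertible map (as in the proof of Corollary~\ref{cor:inverses}), the existence of the needed tail-dense $p^{-1}$-like sequence in $Y$ can be extracted exactly as in the proofs of Lemma~\ref{lem:chainz} and Theorem~\ref{thm:dynamics}: build a tail-dense sequence of points, connect consecutive points by finer and finer $\O$-chains, and route those chains along the cycle structure of $p^{-1}$ (which, by the previous paragraph, is the same cycle structure as $p$, so the combinatorial routing is the one already used for $r$ and $s$). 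Part (2) is then immediate by Stone duality, since $f^{-1} \quotient f$ dualizes to $(f^{\mathrm{St}}) \embeds (f^{-1})^{\mathrm{St}} = (f^{\mathrm{St}})^{-1}$, and $f^{\mathrm{St}} = p^\uparrow$.

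For the ``furthermore'' clause, I would combine the isomorphism $p^\uparrow \cong (p^\uparrow)^{-1}$ (up to re-indexing) established above with a positive answer to Question~\ref{q:shiftmap} only where the cycle structure genuinely involves $\Z$-copies: the point is that for a general trivial map the obstruction to ``$f \cong f^{-1}$'' rather than merely ``$f^{-1} \quotient f$'' is concentrated in the infinite ($\Z$-type) part of the orbit decomposition, and that part is built out of copies of the shift map, so if the shift map is isomorphic to its inverse then so is every trivial map (one handles the finite-cycle part by the conjugacy of a finite cycle with its reverse, and the $\Z$-part by appealing to the hypothesis on $s$, taking a disjoint-union / product of copies of the conjugating homeomorphism). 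The statement for $\pwmf$ follows dually.

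I expect the main obstacle to be the converse/``routing'' step in part (1): verifying carefully that the tail-dense $p^{-1}$-like sequence really can be constructed in an \emph{arbitrary} metrizable quotient, i.e.\ that chain recurrence (or chain transitivity, in the relevant sub-cases) of the quotient suffices, and that the error-control when concatenating $\O$-chains along the cycles of $p^{-1}$ goes through uniformly. This is essentially a re-run of Lemma~\ref{lem:chainz}'s proof for a general finite-and-$\Z$ cycle pattern rather than the specific patterns of $r$ and $s$, so while not conceptually new it requires some care with the modular bookkeeping on the finite cycles and with the two-sided (forward and backward) chains on the $\Z$-cycles. The second expected subtlety is making precise the sense in which a positive answer to Question~\ref{q:shiftmap} ``lifts'' from $s$ to an arbitrary countable disjoint union of $\Z$-copies, which amounts to observing that an isomorphism $h$ with $h \circ s^\uparrow = (s^\uparrow)^{-1} \circ h$ can be amalgamated over a countable index set — routine, but worth spelling out.
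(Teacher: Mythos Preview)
Your orbit decomposition is wrong, and this error undermines the whole argument. A \emph{mod-finite} permutation of $\w$ need not be a genuine permutation: the successor map $s(n)=n+1$ is a mod-finite permutation whose single orbit is an $\N$-orbit (it has a point with no preimage), and $s^{-1}$ has a backwards $\N$-orbit. These two orbit types are \emph{not} conjugate to each other, so your claim that ``$p$ and $p^{-1}$ have exactly the same cycle data'' is false in general. Indeed, if that claim were correct, it would already prove that $s^\uparrow \cong (s^\uparrow)^{-1}$ in \zfc, answering Question~\ref{q:shiftmap} positively --- but the paper explicitly records this as open, and under \ocama the shift and its inverse are not even quotients of one another. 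So the first paragraph proves too much, and the ``furthermore'' paragraph then becomes incoherent: you simultaneously claim to have already established $p^\uparrow \cong (p^\uparrow)^{-1}$ and to need the hypothesis on $s$.

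The paper's proof is close in spirit to what you intend but handles exactly the piece you are missing. By the lemma immediately preceding the theorem, one may assume $p$ partitions $\w$ into finite cycles, $\Z$-orbits, and finitely many $\N$-orbits (and no backwards $\N$-orbits, or vice versa). On the union $B$ of the finite cycles and $\Z$-orbits your argument is correct: $p\rest B$ and $(p\rest B)^{-1}$ are conjugate permutations, so $p^*\rest B^*$ and $(p^*)^{-1}\rest B^*$ are genuinely isomorphic. On each of the finitely many $\N$-orbits $A_i$, however, $p^*\rest A_i^*$ is a copy of $s^*$ and $(p^*)^{-1}\rest A_i^*$ is a copy of $(s^*)^{-1}$; here one only gets a quotient map, by Corollary~\ref{cor:inverses}, and an isomorphism only under the additional hypothesis of Question~\ref{q:shiftmap}. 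Pasting these finitely many pieces together gives the result. There is no need to re-invoke the Main Lemma or rebuild $\varphi$-like sequences for general $p$; the reduction to the shift map is carried out at the level of $\w$, not at the level of metrizable quotients.
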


Before proving this theorem, we will require a bit more terminology.

\begin{definition}
Let $p: \w \to \w$ be a mod-finite permutation of $\w$. $A \sub \w$ is \emph{fixed by $p$} if for all $n$ in the domain of $p$, $p(n) \in A$ if and only if $n \in A$.
\begin{itemize}
\item $A \sub \w$ is a \emph{$\Z$-orbit} if $A$ is fixed by $p$ and if $A = \set{a_z}{z \in \Z}$, where
\begin{itemize}
\item[$\circ$] $a_{n+1} = p(a_n)$ for all $n \in \N$,
\item[$\circ$] all the $a_n$ are distinct.
\end{itemize}
\item $A \sub \w$ is an \emph{$\N$-orbit} if $A$ is fixed by $p$ and if $A = \set{a_n}{n \in \N}$, where
\begin{itemize}
\item[$\circ$] $a_{n+1} = p(a_n)$ for all $n \in \N$,
\item[$\circ$] all the $a_n$ are distinct.
\end{itemize}
\item $A \sub \w$ is a \emph{backwards $\N$-orbit} if it is an $\N$-orbit for the mod-finite permutation $p^{-1}$.
\item $A \sub \w$ is an \emph{$n$-cycle} if $|A| = n$ and $p$ cyclically permutes the elements of $A$. $A$ is called a \emph{finite cycle} if it is an $n$-cycle for some $n \in \N$, and in this case $n$ is called the \emph{period} of the cycle.
\end{itemize}
\end{definition}


\begin{lemma}
For every mod-finite permutation $p$ of $\w$, there is a mod-finite permutation $q$ of $\w$ with $p^* = q^*$ and $p^\uparrow = q^\uparrow$, such that $q$ induces a partition of $\w$ into the four types of orbits described above. Furthermore, only finitely many members of this partition are either $\N$-orbits or backwards $\N$-orbits, and only one of these two types is included at all.
\end{lemma}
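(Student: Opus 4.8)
The plan is to analyze the cycle structure of a mod-finite permutation $p$ and massage it, modulo a finite set, into a canonical form. First I would recall that a mod-finite permutation $p$ restricts to a genuine bijection between two cofinite sets $A,B\sub\w$. The action of $p$ on $A\cap B$ (iterating forwards and backwards as long as we stay inside the domain/range) decomposes $\w$ into orbits of the four stated types: finite cycles, $\Z$-orbits (bi-infinite), $\N$-orbits (forward-infinite but with a first element, i.e.\ an element of $B\setminus A$ sitting at the bottom, or more precisely an element not in the range), and backwards $\N$-orbits (an element not in the domain sitting at the top). The only subtlety is that the ``broken'' orbits — those that are not already finite cycles or full $\Z$-orbits — arise precisely from the finitely many points in the symmetric difference of $\w$ with $A$ and with $B$; each such point lies in exactly one orbit, so there are only finitely many $\N$-orbits and backwards $\N$-orbits.

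Next I would modify $p$ on a finite set to obtain $q$. The goal is to reduce to only one of the two infinite broken types. The idea: if $p$ has both an $\N$-orbit $\set{a_n}{n\in\N}$ and a backwards $\N$-orbit $\set{b_n}{n\in\N}$ (indexed so $p^{-1}(b_{n+1})=b_n$, i.e.\ $b_0$ is the top element with no $p$-image), we can splice them together by redefining $q(a_0)=b_0$ — wait, we need to be careful about direction; concretely, glue the ``missing bottom'' of one $\N$-orbit to the ``missing top'' of a backwards $\N$-orbit to form a single $\Z$-orbit, eliminating one orbit of each type. Repeating, we end with at most one broken infinite orbit. Since $q$ differs from $p$ on only finitely many points, the Stone extensions agree on $\w^*$ (trivial maps induced by permutations agreeing mod-finite induce the same autohomeomorphism) and likewise $p\up=q\up$ as automorphisms of $\pwmf$ (finite changes do not affect the quotient by the ideal of finite sets). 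One also has to check $q$ is still a mod-finite permutation, which is immediate since we only rearranged finitely many points.

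The step I expect to be the main obstacle — though it is more bookkeeping than genuine difficulty — is verifying cleanly that the orbit decomposition is exhausted by exactly the four types and that the count of broken orbits is finite, controlled by $\card{\w\setminus A}+\card{\w\setminus B}$. One must argue that every element of $\w$ lies in some orbit (follow $p$ forward and backward until forced to stop), that distinct orbits are disjoint (standard, using that $p$ is injective where defined), that an orbit is infinite and not a $\Z$-orbit iff it contains a point outside $\dom(p)$ or outside $\mathrm{ran}(p)$, and that such a point determines the orbit uniquely. Then the splicing construction and the verification that it terminates with ``only finitely many $\N$-orbits or backwards $\N$-orbits, and only one of these two types is included'' is a finite induction on $\min$ of the two counts. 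Throughout, I would lean on the already-noted fact that passing from $p$ to any mod-finite-equal $q$ changes neither $p^*$ nor $p\up$, so all the work happens at the combinatorial level of permutations of $\w$.
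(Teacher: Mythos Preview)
Your proposal is correct and follows exactly the paper's approach: decompose into orbits (the paper leaves this as an ``easy exercise''), note that broken orbits are finite in number because they are indexed by the finitely many points outside $\mathrm{dom}(p)$ or $\mathrm{ran}(p)$, and then splice an $\N$-orbit with a backwards $\N$-orbit into a $\Z$-orbit by extending the domain of $p$ at one point. One small slip: after repeating the splice you end with orbits of only one of the two types, not ``at most one broken infinite orbit'' --- but you state this correctly two sentences later, so the argument is fine.
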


To prove the first assertion of this lemma, it suffices to construct a mod-finite permutation $q$ such that $p$ and $q$ differ in only finitely many places and the domain of $q$ can be partitioned into the four types of orbits described above. This is an easy exercise, and we omit the proof. For the second assertion, it suffices to notice that an $\N$-orbit and a backwards $\N$-orbit can be combined into a $\Z$-orbit (by extending the domain of $p$ by one point). In light of this lemma, we may and do assume from now on that every mod-finite permutation induces a partition of $\w$ into the four types of orbits as described above.

\begin{proof}[Proof of Theorem~\ref{thm:inverses}]
Let $p$ be a mod-finite permutation of $\w$. Assume that $p$ has $n$ $\N$-orbits and no backwards $\N$-orbits (the proof is nearly identical if we assume it has $n$ backwards $\N$-orbits and no $\N$-orbits). Let $A_0,A_1,\dots,A_{n-1}$ denote the $\N$-orbits of $p$, and let $B = \w \setminus \bigcup_{i < n}A_i$. 

Observe that $p \rest B$ is a bijection, and is naturally isomorphic to its inverse (each $\Z$-orbit and each $m$-cycle remains a $\Z$-orbit or an $m$-cycle -- it just runs in the opposite direction under $(p \rest B)^{-1}$). Thus $p^* \rest B^*$ is isomorphic to $(p^*)^{-1} \rest B^*$. By Corollary~\ref{cor:inverses}, $p^* \rest A_i^*$ is a quotient of $(p^*)^{-1} \rest A_i^*$ for each $i < n$. If the answer to Question~\ref{q:shiftmap} is positive, then $p^* \rest A_i^*$ is isomorphic to $(p^*)^{-1} \rest A_i^*$ for each $i < n$.

Pasting these quotient mappings (or isomorphisms) together proves part $(1)$ of the theorem, and part $(2)$ follows via Stone duality.
\end{proof}

It is an open question whether any of the results of this section can be extended to non-trivial automorphisms:

\begin{question}
Assuming \ch, does every automorphism of $\pwmf$ embed in its inverse (even the nontrivial ones)? Is every automorphism isomorphic to its inverse?
\end{question}

\subsection{Classifying the trivial universal maps}

We mentioned already at the end of Section 3.3 that the proof of Theorem~\ref{thm:dynamics} shows something a little stronger than the statement of the theorem. We will use this stronger statement in this subsection:

\begin{lemma}\label{lem:cycles}
Assuming \ch, 
\begin{enumerate}
\item Both $s^*$ and $(s^*)^{-1}$ are universal for chain transitive dynamical systems of weight $\leq\!\continuum$.
\item Let $\tilde r\!$ be a permutation of $\w$ consisting only of finite cycles, and suppose that for every $k \in \N$, all but finitely many of the finite cycles of $\tilde r\!$ have period divisible by $k$. Then
$\tilde r^*$ is universal for chain recurrent dynamical systems of weight $\leq\!\continuum$.
\end{enumerate}
\end{lemma}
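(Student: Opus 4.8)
The plan is to prove Lemma~\ref{lem:cycles} by revisiting the proof of Theorem~\ref{thm:dynamics} and checking that nothing essential changes when $r$ is replaced by $s^{-1}$ in part (1) or by a more general cycle-permutation $\tilde r$ in part (2). Part (1) is essentially already in the excerpt: as noted right after the proof of Theorem~\ref{thm:dynamics}, the argument ``goes through, with only minor modifications, for $s^{-1}$ instead of $s$''; I would simply record that the proof of Lemma~\ref{lem:chainz}(1) is symmetric under reversing the orbit direction (an $\e$-chain run backwards through $(X,f)$ is an $\e$-chain for $(X,f^{-1})$ in the appropriate sense once one passes to a metric quotient), so that a chain transitive $f$ contains a tail-dense $s^{-1}$-like sequence, and then invoke Theorem~\ref{thm:main} together with Lemma~\ref{lem:ct2}(1) exactly as before. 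The only point to be careful about is that ``$s^{-1}$-like'' makes sense: $s^{-1}$ is a mod-finite permutation of $\w$, so $\varphi^*$ for the induced $(\Z,+)$-action is defined, and the forgetful reduction of Theorem~\ref{thm:main} applies verbatim.

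For part (2) the key observation is that the converse (``if'') direction of Lemma~\ref{lem:chainz}(2) only uses that $\tilde r$ is a permutation consisting of finite cycles, not the specific choice of periods $n!$; and the forward (``only if'') direction used the periods $n!$ only to guarantee that, given a $\nicefrac1k$-chain of length $n_k$ from $d_k$ to itself, one can find cofinitely many cycles of $\tilde r$ whose period is divisible by $n_k$, allowing the chain to be wrapped around the cycle an integer number of times. The hypothesis on $\tilde r$ in part (2) — that for every $k$, all but finitely many finite cycles of $\tilde r$ have period divisible by $k$ — is exactly the property needed to reproduce this step: since $k \mapsto n_k$ may be taken strictly increasing (as in the original proof, by repeating chains), for each $k$ all but finitely many cycles of $\tilde r$ have period divisible by $n_k$, and on each such cycle we map the $\nicefrac1k$-chain around an integer number of times, with the first $n_k$ coordinates tracing out the chain and the rest repeating it. The remaining (finitely many exceptional) cycles contribute only finitely many sequence-terms, so they do not affect $r$-likeness (which is a tail condition) or tail-density. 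This produces a tail-dense $\tilde r$-like sequence in any metrizable chain recurrent $(X,f)$.

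With those two adaptations of Lemma~\ref{lem:chainz} in hand, the rest is bookkeeping identical to the proof of Theorem~\ref{thm:dynamics}: chain recurrence of $\tilde r^*$ is Lemma~\ref{lem:ct1}(2) (stated there for an arbitrary permutation of $\w$ consisting only of finite cycles), chain recurrence passes to metrizable quotients by Lemma~\ref{lem:ct2}, hence every metrizable quotient of a chain recurrent weight-$\leq\!\continuum$ system is chain recurrent, hence contains a tail-dense $\tilde r$-like sequence by the adapted Lemma~\ref{lem:chainz}(2), hence is a quotient of $\tilde r^*$ by Theorem~\ref{thm:main}; under \ch\ this covers all chain recurrent systems of weight $\leq\!\continuum$, and conversely every quotient of $\tilde r^*$ is chain recurrent. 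Similarly for $s^*$, $(s^*)^{-1}$ and chain transitivity in part (1).

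\textbf{Main obstacle.} I expect the only real subtlety to be the forward direction of the adapted Lemma~\ref{lem:chainz}(2): one must choose the index set $D = \bigcup_{n}\{n\}\times(\text{period of }n\text{-th cycle})$ compatibly with $\tilde r$'s cycle structure and verify that mapping each sufficiently-long cycle onto an integer number of repetitions of a $\nicefrac1k$-chain genuinely yields $f(x_{n,m})\approx_\e x_{\tilde r(n,m)}$ for all large $n$, including at the ``wrap-around'' point where $m+1$ is taken modulo the period — this is where divisibility of the period by $n_k$ is used, so that the chain closes up exactly and no error is introduced at the seam. Everything else is a direct transcription of arguments already given.
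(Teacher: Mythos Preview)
Your proposal is correct and matches the paper's approach exactly: the paper gives no separate proof of this lemma, merely pointing back to the remarks following Theorem~\ref{thm:dynamics} where it is observed that the argument ``goes through, with only minor modifications, for $s^{-1}$ instead of $s$'' and ``still works if $r$ is replaced with any permutation $\tilde r$'' satisfying the stated divisibility condition. Your write-up simply makes those modifications explicit, and your identification of the one genuine point of care---that divisibility of the cycle periods by each $n_k$ is what makes the wrapped chain close up without error at the seam---is precisely the content of the paper's second bullet. One small imprecision: your parenthetical about a $(\Z,+)$-action and ``$\e$-chains for $(X,f^{-1})$'' is not quite right, since $f$ need not be invertible; what you actually need (and what your argument uses) is that reversing the indexing of an $\e$-chain for $f$ produces an $s^{-1}$-like segment, and that the $(\N,+)$-action generated by any total finite-to-one extension of $s^{-1}$ induces $(s^*)^{-1}$ on $\w^*$, so the Main Lemma applies directly.
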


\begin{definition}
If $p$ is a mod-finite permutation of $\w$, let us say that $p$ is \emph{pan-divisible} if for every $k \in \N$, all but finitely many of the finite cycles of $p$ have period divisible by $k$.
\end{definition}

For example, $r$, $s$, and $t$ are all pan-divisible: $r$ is pan-divisible because any fixed $k$ divides $n!$ for large enough $n$, and $s$ and $t$ are pan-divisible vacuously, because they have no finite cycles. 

\begin{theorem}\label{thm:orbitz}
Assume \ch, and let $p$ be a mod-finite permutation of $\w$.
The following are equivalent:
\begin{enumerate}
\item $p$ has infinitely many $\Z$-orbits and is pan-divisible.
\item $p^*$ is a universal for surjective dynamical systems of weight $\leq\continuum$.
\item $p^\uparrow$ is a universal automorphism of $\pwmf$.
\end{enumerate}
\end{theorem}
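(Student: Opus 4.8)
The plan is to prove the cycle of implications $(3)\Rightarrow(2)\Rightarrow(1)\Rightarrow(2)$ and $(2)\Rightarrow(3)$, where the last two equivalences $(2)\Leftrightarrow(3)$ are immediate from Stone duality (an autohomeomorphism $p^*$ is universal for surjective dynamical systems of weight $\le\continuum$ iff $p^\uparrow$ is a universal automorphism of $\pwmf$, since surjective maps of weight $\le\continuum$ dualize to embeddings of automorphisms, and $t^\uparrow\embeds p^\uparrow$ for the specific $t$ of Theorem~\ref{thm:main2}). So the real content is $(1)\Leftrightarrow(2)$.

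For $(1)\Rightarrow(2)$: assume $p$ has infinitely many $\Z$-orbits and is pan-divisible. By Theorem~\ref{thm:main} (the Main Lemma), it suffices to show that every metrizable surjective dynamical system $(X,f)$ of weight $\le\continuum$ contains a tail-dense $p$-like sequence. The idea is to split the domain $D$ of $p$ into the union of its $\Z$-orbits (there are infinitely many, say $\seq{Z_i}{i\in\N}$, with $Z_i=\set{a^i_z}{z\in\Z}$), its finite cycles, and the finitely many $\N$-orbits and backwards $\N$-orbits (using the orbit-partition lemma from Section 3.5, we may assume $p$ has such a partition). On the $\Z$-orbits we can essentially run the argument of Theorem~\ref{thm:surjections}: fix a tail-dense sequence $\seq{d_i}{i\in\N}$ in $X$, put $x_{a^i_z}=f^z(d_i)$ for $z\ge 0$, and choose preimages going backwards so that $f(x_{a^i_z})=x_{a^i_{z+1}}$ for $z<0$ (possible by surjectivity); since every $d_i$ occurs, this part of the sequence is already tail-dense. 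On the finite cycles, we use pan-divisibility exactly as in Lemma~\ref{lem:chainz}$(2)$: for each $k$ there is a $\nicefrac1k$-chain from some point back to itself of some length $n_k$, and all but finitely many finite cycles have period divisible by all small $k$, so we can wrap the $\nicefrac1k$-chains around the long cycles and get the $p$-likeness up to $\e$ for every $\e$. (Here we only need \emph{some} tail-dense $p$-like sequence, so we don't even need the finite-cycle part to be dense; we can put arbitrary points on the finitely many $\N$- and backwards-$\N$-orbits.) Pasting these together gives a tail-dense $p$-like sequence, so $(X,f)\quotient p^*$.

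For $(2)\Rightarrow(1)$, I would argue the contrapositive, showing that if $p$ fails either condition then $p^*$ is not universal for surjective dynamical systems. First suppose $p$ has only finitely many $\Z$-orbits. The key invariant is: a trivial map $p^*$ can only have ``finitely much room'' for $\Z$-like behavior, whereas a genuinely $\Z$-like system of weight $\continuum$ — say the shift $t^*$ on $(\w\times\Z)^*$, or even a bijective metrizable system like an irrational rotation — requires infinitely many independent $\Z$-orbits to approximate. Concretely, if $p^* \quotient g$ with $g$ an autohomeomorphism of $\w^*$ having no periodic points and no ``eventually periodic'' structure, one derives a contradiction by tracing the $\Z$-orbit decomposition of $p$ through the quotient map; the finitely many $\Z$-orbits of $p$ and the finite cycles (which are ``recurrent'' in a bounded way) cannot cover a metrizable quotient that contains a tail-dense $t$-like sequence with no short return chains. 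Second, suppose $p$ is not pan-divisible, i.e. there is a fixed $k$ such that infinitely many finite cycles of $p$ have period \emph{not} divisible by $k$. Then I would exhibit a specific surjective (in fact bijective) dynamical system of weight $\le\continuum$ that is \emph{not} a quotient of $p^*$ — for instance a system built so that every tail-dense $p$-like sequence would force the existence of $\e$-chains of every length in some residue class mod $k$ that the system forbids; this is essentially the failure of the wrapping argument from Lemma~\ref{lem:chainz}$(2)$, turned into an obstruction. The cleanest candidate is probably the rotation of the circle by $1/k$, or a finite system of $k$ points cyclically permuted composed with something, designed so that its $\e$-chains from a point to itself all have length $\equiv 0 \pmod k$ for small $\e$; infinitely many cycles of period prime to $k$ then cannot be mapped onto such chains in a $p$-like way.

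\textbf{Main obstacle.} The routine direction is $(1)\Rightarrow(2)$, which is just a careful merge of the arguments already given for Theorems~\ref{thm:surjections} and~\ref{thm:dynamics}. The hard part is the necessity direction $(2)\Rightarrow(1)$, and specifically the ``infinitely many $\Z$-orbits'' half: one must show that a trivial map with only finitely many $\Z$-orbits genuinely fails to be universal, which requires pinning down \emph{which} surjective weight-$\continuum$ system it cannot dominate and giving a clean combinatorial obstruction to the existence of a tail-dense $p$-like sequence in that system. I expect this to rest on an analysis of how the orbit structure of $p$ (finitely many $\Z$-orbits, all other infinite behavior packed into finite cycles and finitely many one-sided orbits) constrains the ``chain recurrence depth'' of any metrizable quotient of $p^*$, contrasted against a target system — like $t^*$ restricted appropriately, or an aperiodic minimal homeomorphism — whose tail-dense $\varphi$-like sequences provably cannot be supported on such a restricted index set.
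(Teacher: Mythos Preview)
Your logical scaffolding has a gap: $(3)\Rightarrow(2)$ is \emph{not} immediate from Stone duality. Automorphisms of $\pwmf$ dualize only to autohomeomorphisms of $\w^*$, not to arbitrary surjective weight-$\continuum$ systems, so ``$p^\uparrow$ is a universal automorphism'' says nothing directly about quotients onto spaces other than $\w^*$. The paper avoids this by proving the cycle $(1)\Rightarrow(2)\Rightarrow(3)\Rightarrow(1)$, with only $(2)\Rightarrow(3)$ coming from duality.

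Your $(1)\Rightarrow(2)$ is close in spirit but has two real gaps. First, you cannot put ``arbitrary points on the finitely many $\N$- and backwards-$\N$-orbits'': each such orbit is infinite, so arbitrary values there produce infinitely many violations of $p$-likeness. Second, your wrapping argument on the finite cycles needs a $\nicefrac{1}{k}$-chain from a point back to itself, i.e.\ chain recurrence --- but a general surjective $f$ need not be chain recurrent. The paper repairs both at once by first passing to a minimal (hence chain transitive) closed invariant $Y\sub X$, then \emph{pasting quotient maps} rather than building a single $p$-like sequence: the $\Z$-orbit part maps onto $X$ via Theorem~\ref{thm:surjections}, while the $\N$/backwards-$\N$ part and the cyclic part map into $Y$ via Theorem~\ref{thm:dynamics} and Lemma~\ref{lem:cycles} respectively.

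For the necessity direction you correctly flag the ``finitely many $\Z$-orbits'' case as the obstacle, but the invariant you are groping for is simply \emph{chain recurrence}. Split each of the finitely many $\Z$-orbits into an $\N$-orbit and a backwards $\N$-orbit; then $p^*$ decomposes into finitely many copies of $s^*$ and $(s^*)^{-1}$ plus a cyclic piece, each chain recurrent by Lemma~\ref{lem:ct1}, so $p^*$ is chain recurrent and hence so is every quotient (Lemma~\ref{lem:ct2}). Since $t^*$ is not chain recurrent, $p^*$ is not universal. For the pan-divisibility failure the paper gives a short algebraic argument in $\pwmf$ rather than a topological one: if infinitely many cycles have period not divisible by $n$, let $A$ be their union and let $c_n$ be infinitely many disjoint $n$-cycles. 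Any embedding of $c_n^\uparrow$ into $p^\uparrow$ would land in a subalgebra $\BB$ on which $(p^\uparrow)^n$ is the identity (since $(c_n^\uparrow)^n=\mathrm{id}$), but $(p^\uparrow)^n(a)\neq a$ for every $a<[A]$, forcing $\BB$ to lie below $[\w\setminus A]$ --- a contradiction. Your rotation-by-$1/k$ idea is in the same spirit but is harder to make precise, since when $p$ also has infinitely many $\Z$-orbits that part alone already surjects onto any metrizable target.
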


Observe that one may endow the set of mod-finite permutations of $\w$ with a natural topology that makes it into a Polish space. As a consequence of this theorem, the set of mod-finite permutations of $\w$ that induce universal automorphisms is meager. In this sense, ``most'' mod-finite permutations do not give rise to universal automorphisms.

\begin{proof}[Proof of Theorem~\ref{thm:orbitz}]
By applying Stone duality, it is easy to see that $(2) \Rightarrow (3)$. We will prove the theorem by showing $(1) \Rightarrow (2)$ and $(3) \Rightarrow (1)$.

To prove $(1) \Rightarrow (2)$, suppose $p$ is a mod-finite permutation of $\w$ that has infinitely many $\Z$-orbits and is pan-divisible. Let $A$ denote the union of the $\Z$-orbits of $p$, let $C$ denote the union of the finite cycles of $p$, and let $B = \w \setminus (A \cup C)$ denote the union of the $\N$-orbits or the backwards $\N$-orbits. If $B \cup C$ is finite, then $p^* = t^*$ and the result follows from Theorem~\ref{thm:surjections}. So let us suppose $B \cup C$ is infinite.

Let $f: X \to X$ be a surjective dynamical system of weight $\leq\!\continuum$; we must show $p^* \quotient f$. Roughly, the idea is to use the universality of $p^* \rest A^*$ to find a quotient mapping $p^* \rest A^* \to f$, and then to extend this mapping to all of $\w^*$ in a ``harmless'' way. 

We will use the following well-known fact about dynamical systems:

\begin{lemma}
There is a closed subspace $Y \sub X$ such that $f$ maps $Y$ into itself, and the dynamical system $f \rest Y: Y \to Y$ is chain transitive. 
\end{lemma}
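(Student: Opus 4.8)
The statement is a standard fact from topological dynamics: every surjective dynamical system on a compact Hausdorff space contains a chain transitive ``core.'' The plan is to produce $Y$ as a maximal invariant subset on which the chain recurrence structure is ``irreducible.'' First I would pass to the chain recurrent set. Define, for $x,y \in X$, the relation $x \preceq y$ meaning that for every open cover $\O$ of $X$ there is an $\O$-chain (in the sense of the definition preceding Lemma~\ref{lem:chains}) from $x$ to $y$; write $x \sim y$ if $x \preceq y \preceq x$. Using compactness and the fact that $f$ is onto (so that backward $\O$-chains exist and chains can always be continued), one checks that $\preceq$ is transitive, that $\{x : x \sim x\}$ — the chain recurrent set $\mathrm{CR}(f)$ — is nonempty, closed, and $f$-invariant, and that $\sim$ restricted to $\mathrm{CR}(f)$ is an equivalence relation whose classes (the ``chain components'') are closed and invariant.

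Next I would select a single chain component. Order the chain components by the relation induced by $\preceq$ (which is a partial order on the set of components), and use a Zorn's-lemma argument together with compactness — a decreasing chain of closed invariant nonempty sets has nonempty intersection — to find a $\preceq$-minimal component $Y$. On such a minimal component, for any two points $a,b \in Y$ and any open cover $\O$ of the whole space $X$, there is an $\O$-chain from $a$ to $b$ lying in $Y$; shrinking $\O$ to an open cover of $Y$ if necessary, this says exactly that $f \rest Y : Y \to Y$ is chain transitive in the sense we have adopted (part (1) of Lemma~\ref{lem:chains}, taken as the definition in the non-zero-dimensional case). The only subtlety is making sure the chains can be taken inside $Y$ rather than merely inside $X$; this is where minimality of the component is used, exactly as in the classical argument, and this is the step I expect to require the most care.

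Since this is a ``well-known fact,'' the cleanest route is simply to cite it. The result for metrizable $f$ is in chapter 4 of \cite{akin} (Conley's decomposition theorem: $\mathrm{CR}(f)$ decomposes into chain components, at least one of which is $\preceq$-minimal and hence chain transitive), and as was already noted for Lemmas \ref{lem:ct2} and \ref{lem:chains} the proofs there never actually use metrizability — compactness and the abstract notion of $\O$-chain suffice. So I would give a one-line proof: apply the Conley decomposition to $f$, take $Y$ to be any minimal chain component, and invoke Lemma~\ref{lem:chains}(1) (our definition of chain transitivity) to conclude. The main obstacle, if one insists on self-containedness, is the Zorn/compactness argument producing a minimal component together with the verification that chains stay inside $Y$; but given the paper's stated convention of borrowing these dynamical preliminaries from \cite{akin}, a citation is the intended and sufficient justification.
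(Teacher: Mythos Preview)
Your approach is correct but considerably more elaborate than the paper's. You go through Conley theory: form the chain recurrent set, partition it into chain components, and then use Zorn's lemma on the induced partial order of components to extract a $\preceq$-minimal one, after which you still have to argue that $\O$-chains between points of $Y$ can be taken inside $Y$. The paper bypasses all of this by taking $Y$ to be a \emph{minimal set} in the classical sense: apply Zorn's lemma directly to the poset of nonempty closed $f$-invariant subsets of $X$ (ordered by reverse inclusion) to obtain a set $Y$ in which every orbit is dense. Density of orbits immediately gives chain transitivity --- indeed it gives topological transitivity, which is strictly stronger --- with no need to discuss chain recurrence, chain components, or whether chains stay inside $Y$. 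Your route works and has the virtue of producing exactly a chain-transitive piece of the chain recurrent set (which is conceptually the ``right'' object); the paper's route is shorter and uses only the most elementary Zorn argument in dynamics.
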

\begin{proof}[Proof of the lemma:]
A closed subset $Y$ of $X$ is called \emph{minimal} if $Y \neq \0$, $f$ maps $Y$ into itself, and for every $y \in Y$ the orbit $\set{f^z(y)}{z \in \Z}$ of $y$ is dense in $Y$. It is well-known that every dynamical system contains minimal closed sets (hint: apply Zorn's lemma to the poset of all nonempty, closed, $f$-invariant subsets of $X$). Any such set clearly suffices.
\end{proof}

By Theorem~\ref{thm:surjections}, $p^* \rest A^*$ is universal for surjective dynamical systems of weight $\leq\!\continuum$. Let $q_A: A^* \to X$ be a quotient mapping from $p^* \rest A^*$ to $f$.
By Theorem~\ref{thm:dynamics}, $p^* \rest B^*$ is universal for chain transitive dynamical systems of weight $\leq\!\continuum$. Let $q_B: B^* \to Y$ be a quotient mapping from $p^* \rest B^*$ to $f \rest Y$.
By Lemma~\ref{lem:cycles}, $p^* \rest C^*$ is universal for chain recurrent dynamical systems of weight $\leq\!\continuum$. Let $q_C: C^* \to Y$ be a quotient mapping from $p^* \rest C^*$ to $f \rest Y$.

Pasting these three maps together, we obtain a map $q = q_A \cup q_B \cup q_C$ defined on all of $\w^*$, and it is clear that $q$ is a quotient mapping from $p^*$ to $f$. This completes the proof of $(1) \Rightarrow (2)$.

To show $(3) \Rightarrow (1)$, we must show two things: 
\begin{enumerate}[(a)]
\item if $p^\uparrow$ is universal, then $p$ has infinitely many $\Z$-orbits, and
\item if $p^\uparrow$ is universal, then $p$ is pan-divisible.
\end{enumerate}

For (a), it is easiest to prove the contrapositive of the Stone dual. That is, we will show that if $p$ has only finitely many $\Z$-orbits then $p^*$ is not a universal autohomeomorphism of $\w^*$.

If $p$ has only finitely many $\Z$-orbits, then (by removing one point in each of the $\Z$ orbits from the domain of $p$) each of them may be decomposed into an $\N$-orbit and a backwards $\N$-orbit. Thus $\w$ can be decomposed into three sets, $A$, $B$, and $C$, such that $A$ consists of finitely many $\N$-orbits, $B$ consists of finitely many backwards $\N$-orbits, and $C$ consists of finite cycles. So $p^* \rest A^*$ is a union of finitely many copies of $s^*$, and it follows from Lemma~\ref{lem:ct1}$(1)$ that $p^* \rest A^*$ is chain recurrent. Similarly, $p^* \rest B^*$ is chain recurrent. Lastly, $p^* \rest C^*$ is chain recurrent as well by Lemma~\ref{lem:ct1}. It follows that $p^*$ is chain recurrent. Moreover, every quotient of $p^*$ is chain recurrent by Lemma~\ref{lem:ct2}. But some autohomeomorphisms of $\w^*$ are not chain recurrent (for example, the map $t^*$), so this shows $p^*$ is not universal.

For $(2)$, we will prove the contrapositive: if $p$ is not pan-divisible, then $p^\uparrow$ is not universal.

Supposing $p$ is not pan-divisible, there is some $n \in \N$ and an infinite $A \sub \w$ such that $p \rest A$ is an infinite union of finite cycles, none of which have period divisible by $n$. Let $c_n$ denote the permutation of $\w$ that is a disjoint union of infinitely many $n$-cycles. We claim $c_n^\uparrow \not\embeds p^\uparrow$.

Aiming for a contradiction, suppose $c_n^\uparrow \embeds p^\uparrow$ and let $\BB$ be a subalgebra of $\pwmf$ such that $c_n^\uparrow$ is isomorphic to $p^\uparrow \rest \BB$. Every member of $\pwmf$ is periodic under $c_n^\uparrow$ with a period dividing $n$. In other words, $(c_n^\uparrow)^n(a) = a$ for every $a \in \pwmf$, and it follows that $(p^\uparrow)^n(b) = b$ for every $b \in \BB$. On the other hand, using our assumption about $p$ and $A$, it is easy to check that if $a < [A]$ then $(p^\uparrow)^n(a) \neq a$. From this it follows that for all $b \in \BB$, either $b = [A]$ or $b \leq [\w \setminus A]$. But then $\BB$ is not a subalgebra of $\pwmf$, and this is the desired contradiction.
\end{proof}

In light of Theorem~\ref{thm:orbitz}, it is now easy to show that not all universal automorphisms are isomorphic. This shows, in particular, that two automorphisms of $\pwmf$ may embed in each other without being isomorphic.

\begin{definition}
If $p$ and $q$ are mod-finite permutations of $\w$, let $p \vee q$ denote the mod-finite permutation of $\w \times 2$ that acts like $p$ on $\w \times \{0\}$ and like $q$ on $\w \times \{1\}$. By reindexing, $p \vee q$ is considered a mod-finite permutation of $\w$.
\end{definition}

\begin{example}
By Theorem~\ref{thm:orbitz}, if \ch holds then both $t^*$ and $(t \vee r)^*$ are universal autohomeomorphisms of $\w^*$. However, these autohomeomorphisms are not isomorphic. To see this, consider the following property of an autohomeomorphism $h$:
\begin{itemize}
\item[$(\dagger)$] $h$ is chain recurrent, but its restriction to a clopen subset never yields a chain transitive dynamical system.
\end{itemize} 
It is fairly easy to check that
\begin{itemize}
\item with $(t \vee r)^\uparrow$, there is an invariant, clopen subset of $\w^*$ with property $(\dagger)$, namely the part that is a copy of $r^*$.
\item with $t^\uparrow$, no invariant, clopen subset of $\w^*$ has property $(\dagger)$.
\end{itemize}
In both cases we leave the details of checking this to the reader. It follows that $(t \vee r)^\uparrow$ and $t^\uparrow$ are not isomorphic.
\end{example}

\subsection{Universal automorphisms under \ocama}

We will end this section with a few observations concerning the structure of the $\embeds$ and $\quotient$ relations under $\ocama$. This will establish the independence of some of the results proved in the earlier parts of this section.

We observed already in subsection 3.4 that the results there fail under \ocama, which implies that the shift map and its inverse are not quotients of each other. Working backwards, we will next show that the results of subsection 3.3 are independent, and after that we will consider subsection 3.2, and whether there might be universal automorphisms under \ocama.

We do not need to apply either $\oca$ or $\ma$ directly for any of the results in this section. Instead, we may content ourselves with applying a consequence of $\ocama$, a general version of which was proved by Farah in \cite{farah}. We quote the result without proof:

\begin{theorem}\label{thm:farah}
Assuming \ocama, if $F: \w^* \to \w^*$ is continuous, then there is some $A \sub \w$ such that the image of $F \rest (\w \setminus A)^*$ is nowhere dense, and $F \rest A^*$ is induced by a finite-to-one function $A \to \w$.
\end{theorem}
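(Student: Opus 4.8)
The plan is to pass to the Boolean dual of $F$, extract its ``trivial part'' by an elementary argument, and then invoke \ocama to control the behaviour of $F$ off that part.

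By Stone duality, $F$ corresponds to a Boolean homomorphism $h \colon \pwmf \to \pwmf$, namely the one with $h([C]) = [D]$ whenever $F^{-1}(C^*) = D^*$. Fix a lifting $\sigma \colon \P(\w) \to \P(\w)$ of $h$, i.e.\ a function with $[\sigma(C)] = h([C])$ for every $C \sub \w$, normalised so that $\sigma(\0) = \0$ and $\sigma(\w) = \w$. For $m \in \w$ put $\U_m = \set{C \sub \w}{m \in \sigma(C)}$, let $A = \set{m \in \w}{\U_m \text{ is a principal ultrafilter}}$, and for $m \in A$ let $\phi(m)$ be the point at which $\U_m$ concentrates. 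For $m \in A$ we then have $m \in \sigma(C) \iff \phi(m) \in C$ for \emph{every} $C \sub \w$, so that $\sigma(C) \cap A = \phi^{-1}(C)$ exactly; unravelling the duality, this says $F \rest A^*$ is induced by $\phi$. Furthermore $\phi$ is finite-to-one: were $\phi^{-1}(n)$ infinite, one could pick infinite sets $C_1,C_2$ with $C_1 \cap C_2 = \{n\}$, and then $\phi^{-1}(n) \sub \sigma(C_1) \cap \sigma(C_2)$ while $[\sigma(C_1) \cap \sigma(C_2)] = h([C_1]) \wedge h([C_2]) = h([\{n\}]) = [\0]$ --- a contradiction. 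All of this is carried out in \zfc, and it yields the first half of the statement for this particular $A$ (or for a finite modification of $A$, absorbed by an improvement of the lift $\sigma$ that \ma provides below).

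It remains to see that $F[(\w\setminus A)^*]$ is nowhere dense. Unravelling once more, this is the assertion that for every infinite $B \sub \w$ there is an infinite $B' \sub B$ with $\sigma(B') \sub^* A$ (equivalently, that $F \rest (\w\setminus A)^*$ has no nonempty clopen set in its image). This can fail badly in \zfc --- e.g.\ under \ch a nontrivial autohomeomorphism of $\w^*$ has $A = \0$ but dense image --- so it is exactly here that \ocama enters, and this is the crux. Following Farah, one forms a separable metric space $Z$ whose points code finite approximations of the data $(m,\,\sigma(B)\cap N)$ for $m \notin A$ --- finite fragments of the putative graphs of restrictions of $F$ to clopen subsets of $(\w\setminus A)^*$, paired with witnessing sets $B$ --- together with an open partition $[Z]^2 = K_0 \cup K_1$ in which $K_0$ marks a genuine incompatibility between two such local pictures. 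If \oca yielded an uncountable $0$-homogeneous set, the recorded incompatibilities would persist in a ccc-indestructible manner, which a $\sigma$-centred forcing together with $\mathsf{MA}_{\aleph_1}$ contradicts; hence $Z = \bigcup_k Z_k$ with each $Z_k$ being $1$-homogeneous, and on each $Z_k$ the local approximations cohere into a single continuous --- hence, by the analysis above, ``trivial'' --- fragment of $F$.

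Finally, one patches the countably many coherent fragments together with a further use of $\mathsf{MA}_{\aleph_1}$ (threading one $\sigma$-centred poset through all of them simultaneously), which gives, for every infinite $B$, an infinite $B' \sub B$ with $\sigma(B') \sub^* A$; equivalently $F[(\w\setminus A)^*]$ has empty interior and so is nowhere dense. This is precisely the content of the lifting theorem for homomorphisms between quotients of $\P(\w)$ established by Farah in \cite{farah}, applied to $h$; the definability hypothesis required there holds automatically here because $F$ is a genuine continuous self-map of the compact space $\w^*$. The main obstacle is, unsurprisingly, arranging the \oca colouring so that the $0$-homogeneous alternative is genuinely refuted by \ma while the $1$-homogeneous pieces retain enough information to recover $F$ on $(\w\setminus A)^*$.
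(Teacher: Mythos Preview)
The paper does not prove this theorem at all: it is quoted verbatim from Farah's memoir \cite{farah} with the explicit remark ``We quote the result without proof.'' So there is no paper-proof to compare against; the result is used as a black box.

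Your sketch is a reasonable outline of how Farah's argument goes, and the \zfc extraction of the ``trivial part'' $A$ is done carefully (though note that for an arbitrary lifting $\sigma$ the sets $\U_m$ need not be filters at all, so defining $A$ via ``$\U_m$ is a principal ultrafilter'' is doing real work; one should also check that the resulting $A$ does not depend, mod finite, on the choice of $\sigma$). The second half, however, is more a pointer to Farah's proof than a proof: the description of the \oca colouring and why the $0$-homogeneous alternative is refuted by $\mathsf{MA}$ is too vague to be checked, and in the end you explicitly appeal to the very theorem you are proving (``This is precisely the content of the lifting theorem\ldots''). That is fine as an expository sketch, but it is not an independent proof --- which is consistent with the paper's own decision simply to cite the result.
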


Note that a special case of this theorem is that under \ocama, all automorphisms of $\pwmf$ are trivial (this consequence of \ocama was known before Theorem~\ref{thm:farah} and is due to Velickovic \cite{velickovic}; the same result under \pfa is due to Shelah-Stepr\={a}ns \cite{S&S}; consistency was first proved by Shelah \cite{shelah}).



\begin{definition}
Let $p$ be a mod-finite permutation of $\w$.
\begin{itemize}
\item The \emph{cyclic part} of $p$ is the union of its finite cycles. If $C$ is the cyclic part of $p$, then $p^* \rest C^*$ is called the cyclic part of $p^*$. 
\item If the cyclic part of $p$ is finite, then $p$ and $p^*$ are called \emph{acyclic}.
\end{itemize}
\end{definition}

\begin{theorem}\label{lem:farah2}
Assuming \ocama, $t^*$ has no cyclilc maps as quotients, and no cyclic map has $s^*$ or $(s^*)^{-1}$ as a quotient. Consequently, $t^*$ is not universal, and $r^*$ is not universal for chain transitive autohomeomorphisms.
\end{theorem}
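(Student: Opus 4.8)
The plan is to reduce each of the two non‑quotient assertions to a combinatorial statement about mod‑finite permutations, using Theorem~\ref{thm:farah}, and then to exploit the fact that a cyclic permutation is built entirely from finite cycles.

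I would begin with the claim that no cyclic map has $s^*$ as a quotient. Suppose $c$ is cyclic and $Q\colon\w^*\to\w^*$ is a continuous surjection with $Q\circ c^* = s^*\circ Q$. Applying Theorem~\ref{thm:farah} to $Q$ yields a set $A\sub\w$ such that $Q\rest A^*$ is induced by a finite‑to‑one function $h\colon A\to\w$ and $Q[(\w\setminus A)^*]$ is nowhere dense. Since $Q$ is onto and $Q[A^*]$ is closed with nowhere dense complement, $Q[A^*]=\w^*$, and finiteness of the fibres of $h$ then forces $h[A]$ to be cofinite. Restricting the relation $Q\circ c^*=s^*\circ Q$ to the clopen set $(A\cap c^{-1}(A))^*$ — where $c^*$ lands in $A^*$ — and comparing Stone extensions gives $h(c(a))=h(a)+1$ for all but finitely many $a\in A$ with $c(a)\in A$. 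The contradiction should now come from walking around a finite cycle of $c$: if some cycle $C$ of period $m$ lay inside $A$ and avoided the finite exceptional set, traversing it once would give $h(a)=h(a)+m$ for an $a\in C$, which is impossible. More generally this forbids $A$ from containing a $c$‑run longer than the period of its cycle, and — together with the cofiniteness of $h[A]$ and the fact that $Q$ must extend continuously over the rest of $\w^*$ — this is what I would push to a genuine contradiction. Replacing $s$ by its inverse (the predecessor map) handles $(s^*)^{-1}$ verbatim.

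For the claim that $t^*$ has no cyclic map as a quotient, I would run the same Farah reduction: a continuous surjection $Q\colon(\w\times\Z)^*\to\w^*$ with $Q\circ t^*=c^*\circ Q$ yields (after transporting Theorem~\ref{thm:farah} through a homeomorphism $(\w\times\Z)^*\approx\w^*$) a set $A\sub\w\times\Z$, a finite‑to‑one $h\colon A\to\w$ inducing $Q\rest A^*$ with $h[A]$ cofinite, and a nowhere dense ``error'' image. The relevant feature of $t$ is that it preserves each column $\{n\}\times\Z$; restricting the conjugacy to a column shows that, inside $A$, the function $h$ must follow a $c$‑orbit along consecutive points of a column. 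Since every orbit of $c$ is finite, $h$ becomes eventually periodic along each column and hence assumes some value infinitely often there, contradicting that $h$ is finite‑to‑one — provided $A$ can be arranged to contain a cofinite piece of a column.

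The two consequences follow quickly. The permutation $r$ is cyclic and $r^*$ is a bijective, hence surjective, autohomeomorphism of the weight‑$\continuum$ space $\w^*$; by the first part it is not a quotient of $t^*$, so $t^*$ is not universal for surjective dynamical systems of weight $\leq\continuum$ (contrast Theorem~\ref{thm:surjections}). And $s^*$ is chain transitive by Lemma~\ref{lem:ct1}(1) but, by the second part, is not a quotient of $r^*$; hence $r^*$ is not universal for chain transitive autohomeomorphisms (contrast Theorem~\ref{thm:dynamics}). The main obstacle, in both parts, is the step that converts the conclusion of Theorem~\ref{thm:farah} — which controls $Q$ only up to a nowhere dense set — into a usable global picture: one must replace the set $A$ that Farah's theorem provides by one that is (nearly) invariant under the relevant permutation, so that the finite‑cycle obstruction really bites, while keeping the inducing function finite‑to‑one.
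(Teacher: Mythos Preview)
Your overall strategy---reduce via Theorem~\ref{thm:farah}, then exploit the finite-cycle structure---is sound, and your setup matches the paper's. The approaches diverge after the reduction.

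The paper does not try to make $A$ nearly invariant. For $c^*\not\quotient s^*$, it picks for each $n$ in the cofinite set $q[A]$ some $a_n\in A$ with $q(a_n)=n$, observes that periodicity of $c$ forces $q(c^k(a_n))=n\neq n+k=s^k(q(a_n))$ for the period $k$ of the cycle containing $a_n$, and takes $b_n$ to be the first iterate along that cycle at which the relation $q\circ c=s\circ q$ breaks; the resulting infinite set $B=\{b_n\}$ gives, for any $\U\in B^*$, a direct witness to $Q(c^*(\U))\neq s^*(Q(\U))$. For $t^*\not\quotient c^*$ the idea is dual: choose $a_n\in A$ with $q(a_n)$ in the $n$th cycle $C_n$ of $c$; since $q^{-1}(C_n)$ is finite while the forward $t$-orbit of $a_n$ is infinite, some iterate escapes $q^{-1}(C_n)$, and the first such gives the bad point $b_n$.

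Your route---upgrade $A$ to a nearly invariant set, then walk a cycle (for $c^*\not\quotient s^*$) or invoke periodicity of $h$ along a column (for $t^*\not\quotient c^*$)---also works, and the obstacle you flag is real but short to overcome. If $E=\{a\in A:p(a)\notin A\}$ were infinite (with $p$ the domain-side permutation), then $q[E]$ is infinite since $q$ is finite-to-one, so $q^*[E^*]=(q[E])^*$ is a nonempty clopen set; but the quotient relation forces its image under the target-side homeomorphism to lie inside the nowhere-dense set $Q[(\text{complement of }A)^*]$, which is absurd. The analogous argument for $p^{-1}$ then shows $A$ is mod-finite a union of full domain orbits, and your cycle-walking and column-periodicity contradictions go through cleanly. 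What your approach buys is a transparent structural picture of $A$; what the paper's buys is a shorter path to the contradiction that never pauses to analyse $A$ globally.
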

\begin{proof}
We will prove first that $t^*$ has no cyclic maps as quotients. Suppose $c$ is a cyclic mod-finite permutation of $\w$, and suppose that $Q: (\w \times \Z)^* \rightarrow \w^*$ is a quotient mapping. By Theorem~\ref{thm:farah}, there is some $A \sub \w \times \Z$ such that $Q \rest A^*$ is induced by a finite-to-one function $q: A \to \w$, and the image of $Q \rest (\w \times \Z \setminus A)^*$ is nowhere dense.

Because the image of $Q \rest (\w \times \Z \setminus A)^*$ is nowhere dense, $A$ must be infinite and $q(A)$ must be co-finite: otherwise $Q$ could not be a surjection. 

Because $q(A)$ is co-finite, we may assume (by removing finitely many points from $A$, if necessary) that $q(A)$ is an infinite union of disjoint cycles, say $q(A) = \bigcup_{n \in \w}C_n$.

For each $n \in \w$, fix $a_n \in A$ with $q(a_n) \in C_n$. The set $q^{-1}(C_n)$ is finite, so $t^k(a_n) \notin C_n$ for all sufficiently large $k$. On the other hand, $c^k(q(a_n)) \in C_n$ for all $k$. Thus there is for each $n$ some $k_n \in \N$ such that $q(t^{k_n}(a_n)) \in C_n$ but $q(t^{k_n+1}(a_n)) \notin C_n$. Let $b_n = t^{k_n}(a_n)$ for each $n$, and let $B = \set{b_n}{n \in \N}$.

$B$ is an infinite subset of $A$, and by design, we have $q(t(b)) \neq c(q(b))$ for all $b \in B$. It follows that if $\U \in B^*$ then $q^*(t^*(\U)) \neq c^*(q^*(\U))$. This contradicts the supposition that $Q$ is a quotient mapping and establishes $t^* \not\quotient c^*$.

Next we will prove that no cyclic map has $s^*$ as a quotient. Suppose $c$ is a cyclic mod-finite permutation of $\w$ as before, and suppose that $Q$ is a quotient mapping from $c^*$ to $s^*$. By Theorem~\ref{thm:farah}, there is some $A \sub \w$ such that $Q \rest A^*$ is induced by a finite-to-one function $q: A \to \w$, and the image of $Q \rest (\w \setminus A)^*$ is nowhere dense.

As before, $A$ must be infinite and $q(A)$ must be co-finite, since otherwise $Q$ could not be a surjection.

Thus for all but finitely many $n \in \N$, we may find some $a_n \in A$ such that $q(a_n) = n$. Because $c$ is cyclic, there is some $k > 0$ such that $c^{k}(a_n) = a_n$, although 
$$q(c^{k}(a_n)) = q(a_n) = n \neq n+k = q(a_n)+k = s^{k}(q(a_n)).$$
Thus for each $n$, there is some $k_n \geq 0$ such that $q(c^{k_n}(a_n)) = s^{k_n}(q(b))$ but $q(c^{k_n+1}(a_n)) \neq s^{k_n+1}(q(b))$ (for example, we could take $k_n$ to be the least $k$ satisfying the above inequality, minus one). Let $b_n = c^{k_n}(a_n)$, and observe that $q(c(b_n)) \neq s(q(b_n))$. Let $B = \set{b_n}{n \in \N}$.

$B$ is an infinite subset of $A$, and we have $q(c(b)) \neq s(q(b))$ for all $b \in B$. It follows that if $\U \in B^*$ then $q^*(c^*(\U)) \neq s^*(q^*(\U))$. This contradicts the supposition that $Q$ is a quotient mapping and establishes $c^* \not\quotient s^*$.
\end{proof}

\begin{theorem}
Assuming \ocama,
\begin{enumerate}
\item there is no universal chain transitive automorphism.
\item there is no universal chain recurrent automorphism.
\end{enumerate}
\end{theorem}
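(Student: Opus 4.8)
The plan is to reduce both parts to trivial automorphisms, then handle part (1) by a short classification and part (2) by a more delicate structural argument. For the reduction: under \ocama every automorphism of $\pwmf$ is trivial (Velickovic \cite{velickovic}, or the special case of Theorem~\ref{thm:farah} quoted above), so a putative universal chain transitive (resp.\ chain recurrent) automorphism is of the form $p\up$ for a mod-finite permutation $p$ of $\w$, and by Stone duality $p^*$ is chain transitive (resp.\ chain recurrent).

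For part (1): by Lemma~5.5 of \cite{WRB}, the only chain transitive trivial automorphisms are, up to re-indexing, $s\up$ and $(s\up)^{-1}$, and both genuinely are chain transitive (by Lemma~\ref{lem:ct1}(1) together with the observation from the proof of Corollary~\ref{cor:inverses} that an invertible map is chain transitive iff its inverse is). Since re-indexing is conjugation by an automorphism and hence preserves universality, it suffices to show that neither $s\up$ nor $(s\up)^{-1}$ is universal chain transitive. If one of them were, then $s\up \embeds (s\up)^{-1}$ or $(s\up)^{-1} \embeds s\up$, and dualizing, one of $s^*,(s^*)^{-1}$ would be a quotient of the other. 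But under \ocama $s^*$ is not a quotient of $(s^*)^{-1}$ (Theorem~5.7 of \cite{Brian}), and since any continuous surjection witnessing $g \quotient f$ also witnesses $g^{-1}\quotient f^{-1}$ for homeomorphisms $f,g$, also $(s^*)^{-1}$ is not a quotient of $s^*$. This contradiction proves part (1).

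For part (2): suppose $\varphi = p\up$ is universal chain recurrent. Since $s\up$ is chain transitive, hence chain recurrent, $s\up \embeds \varphi$, so $s^*$ is a quotient of $p^*$; as no cyclic map has $s^*$ as a quotient (Theorem~\ref{lem:farah2}), $p$ is not cyclic, i.e.\ it has infinitely many points off its finite cycles. Chain recurrence of $p^*$ also forces $p$ to have only finitely many $\N$-orbits, backwards $\N$-orbits, and $\Z$-orbits: for example, if $p$ had infinitely many $\N$-orbits $\set{a^i_j}{j\in\w}$ (with $i$ ranging over an infinite set), then $A = \bigcup_i \set{a^i_j}{j\geq 1}$ would be an infinite, co-infinite set with $p^*(A^*) \subsetneq A^*$, contradicting chain recurrence, and the remaining cases are symmetric. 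Finally, for each $n$ the permutation $c_n$ gives a chain recurrent automorphism $c_n\up$ (Lemma~\ref{lem:ct1}(2)), so $c_n\up \embeds \varphi$ for every $n$; repeating the argument from the proof of Theorem~\ref{thm:orbitz} showing that a non-pan-divisible permutation has some $c_n\up$ not embedding in it — an argument that used only that every element of $\pwmf$ has period dividing $n$ under $c_n\up$ — we conclude that $p$ is pan-divisible.

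To close part (2) one combines these constraints. Write $p = E \sqcup C$ with $E$ the non-cyclic part — infinite, but a union of only finitely many orbits, so $p^*\rest E^*$ is a finite disjoint union of copies of $s^*$, $(s^*)^{-1}$, and the shift on $\Z^*$ — and $C$ the (pan-divisible) cyclic part. Applying Theorem~\ref{thm:farah} to trivialize each of the quotient maps $p^*\to s^*$, $p^*\to (s^*)^{-1}$, and $p^*\to c_n^*$, one tracks on which part of $p$ each surjection is supported. I expect the main obstacle to be precisely this tracking: showing that the surjections onto $s^*$ and $(s^*)^{-1}$ cannot be supported on the cyclic part $C$ and so must come essentially from $E^*$, whereas the very limited flow $p^*\rest E^*$ cannot simultaneously be a quotient onto $s^*$ and accommodate the entire family of $c_n^*$ — the desired contradiction. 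Making this precise needs careful bookkeeping of the Farah-trivialized maps, along the lines of the proof of Theorem~\ref{lem:farah2}.
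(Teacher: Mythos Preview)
Your argument for part~(1) is correct and essentially identical to the paper's: reduce to trivial automorphisms, invoke Lemma~5.5 of \cite{WRB} to get that only $s\up$ and $(s\up)^{-1}$ are candidates, and use Theorem~5.7 of \cite{Brian} (and its symmetric version) to see neither embeds in the other.

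Part~(2), however, has a genuine gap. Your derived constraints on $p$ --- non-cyclic, only finitely many infinite orbits, pan-divisible cyclic part --- are correct but not contradictory, and the ``tracking'' you sketch in the final paragraph cannot close the argument. The targets you propose to track, namely $s^*$, $(s^*)^{-1}$, and the $c_n^*$, each have at most one infinite orbit; a permutation such as $s \vee s^{-1} \vee r$ satisfies all your constraints and genuinely does have every one of these as a quotient. So no contradiction arises from those targets.

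The missing idea is to choose a chain recurrent target with \emph{more} infinite orbits than $p$ has. The paper formalizes this via an invariant: define the index $\dlt(p)$ to be the number of $\N$-orbits plus the number of backwards $\N$-orbits plus twice the number of $\Z$-orbits (and $\infty$ if there are infinitely many $\Z$-orbits). One then proves two short claims: (i)~$p^*$ is chain recurrent iff $\dlt(p)<\infty$, which is essentially your ``finitely many infinite orbits'' observation; and (ii)~$p^* \quotient q^*$ implies $\dlt(q)\le\dlt(p)$, proved by pulling back each infinite orbit of $q$ along a Farah-trivialized quotient map and observing (via Theorem~\ref{lem:farah2}) that the preimage cannot be purely cyclic, so must contain an infinite orbit of $p$. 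Given these, any chain recurrent $p^*$ fails to have $q^*$ as a quotient whenever $\dlt(q)=\dlt(p)+1$, and such $q$ is still chain recurrent. This replaces your open-ended bookkeeping with a single monotone numerical invariant.
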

\begin{proof}
For $(1)$, it was proved as Lemma 5.5 in \cite{WRB} that $s^\uparrow$ and its inverse are the only chain transitive trivial automorphisms of $\pwmf$. Thus, under \ocama, there are precisely two chain train transitive automorphisms. However, we claim that neither one is a quotient of the other, so neither one is universal. That \ocama implies $(s^\uparrow)^{-1}$ is not a quotient of $s^\uparrow$ was proved (via Theorem~\ref{thm:farah}) as Theorem 5.7 in \cite{Brian}. The proof given there is easily adapted to show that $s^\uparrow$ is not a quotient of $(s^\uparrow)^{-1}$.

To prove $(2)$, we will need a definition and a few claims.

If $p$ is a mod-finite permutation of $\w$ and has finitely many $\Z$-orbits, then the \emph{index} of $p$, denoted $\dlt(p)$, is the number of $\N$-orbits of $p$, plus the number of backwards $\N$-orbits, plus twice the number of $\Z$-orbits. If $p$ has infinitely many $\Z$-orbits, then we set $\dlt(p) = \infty$.


\begin{claim}
If $p$ and $q$ are mod-finite permutations of $\w$ and $p^* \quotient q^*$, then $\dlt(q) \leq \dlt(p)$.
\end{claim}
\begin{proof}[Proof of claim:]
By removing one point from the domain of $p$ (or $q$), each $\Z$-orbit can be decomposed into an $\N$-orbit and a backwards $\N$-orbit. Thus we may assume that $p$ and $q$ have no $\Z$-orbits, and that $\dlt(p)$ (or $\dlt(q)$) is equal to the number of $\N$-orbits and backwards $\N$-orbits in $p$ (or in $q$, respectively).

Suppose $\dlt(q) > \dlt(p)$ and suppose $Q$ is a quotient mapping from $p^*$ to $q^*$.
Let $A$ be an $\N$-orbit or a backwards $\N$-orbit in $q$. Fix $B \sub \w$ such that $B^* = Q^{-1}(A^*)$. Notice that we cannot have $p(B) \setminus B$ infinite (because $Q$ is a quotient mapping and $q^*(B^*) = A^*$). Thus, by modifying $B$ on a finite set if necessary, we may assume $B$ is a union of orbits in $p$. By Lemma~\ref{lem:farah2}, $B$ is not a union of finite cycles. Thus $B$ contains either an $\N$-orbit or a backwards $\N$-orbit of $p$. This establishes an injection from the set of $\N$-orbits and backwards $\N$-orbits of $q$ to the set of $\N$-orbits and backwards $\N$-orbits of $p$, which proves the claim. 
\end{proof}

\begin{claim}
If $p$ is a mod-finite permutation of $\w$, then $p^*$ is chain recurrent if and only if $\dlt(p) \neq \infty$.
\end{claim}
\begin{proof}[Proof of claim:]
Let $C$ denote the union of all the finite cycles of $p$, and let $A = \w \setminus C$ denote the union of its infinite orbits.
If $\dlt(p) \neq \infty$, then $p^* \rest A^*$ is a finite union of copies of $s^*$ and $(s^*)^{-1}$. Each of these is chain transitive, and $p^* \rest C^*$ is chain recurrent by Lemma~\ref{lem:ct1}. Thus $p^*$ is chain recurrent. 
On the other hand, if $\dlt(p) = \infty$ then $p$ contains infinitely many $\Z$-orbits, and it is easy to check that this implies $p^*$ fails to be chain recurrent.
\end{proof}

From these two claims it follows that there is no universal chain recurrent automorphism. In fact a little more is true: there is no ``jointly universal'' finite family, which means that given any finitely many chain recurrent automorphisms, there is a chain recurrent automorphism (any one with higher index) that is a quotient of none of them.
\end{proof}

Let us now turn to the question of whether $\ocama$ admits a universal automorphism of $\pwmf$. We have seen already that $t^\uparrow$ is no longer universal under \ocama. We will prove:

\begin{theorem}\label{thm:ocamauniv}
Assuming \ocama, 
\begin{enumerate}
\item an automorphism embeds in $t\up$ if and only if it is acyclic.
\item an automorphism embeds in $r\up$ if and only if it is cyclic.
\item every automorphism embeds in either $t^\uparrow$ or $(t \vee r)^\uparrow$.
\end{enumerate}
\end{theorem}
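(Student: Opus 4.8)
The plan is to use \ocama only to reduce to \emph{trivial} automorphisms, after which Stone duality turns everything into a question about which $p^*$ occur as quotients of $t^*$, $r^*$, or $(t\vee r)^*$. Fix a mod-finite permutation $p$ of $\w$, normalized so that $\w = Z \sqcup N \sqcup C$ mod finite, where $Z$ is the union of the $\Z$-orbits of $p$, $N$ the (finite) union of its $\N$-orbits, and $C$ its cyclic part; then $p$ is acyclic iff $C$ is finite and cyclic iff $C$ is cofinite, and $p^* = (p^*\rest Z^*) \sqcup (p^*\rest N^*) \sqcup (p^*\rest C^*)$ as a disjoint clopen union. By the Stone-duality dictionary from the beginning of this section, $p\up \embeds t\up$ iff $p^*$ is a quotient of $t^*$, and likewise for $r\up$ and $(t\vee r)\up$. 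Each of (1) and (2) now splits into a necessity direction and a sufficiency direction, and (3) will follow from these by pasting.

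\emph{Necessity.} I would imitate the proof of Theorem~\ref{lem:farah2}. Given a quotient $Q\colon (\w\times\Z)^*\to\w^*$ from $t^*$ to $p^*$, Theorem~\ref{thm:farah} produces $A\subseteq\w\times\Z$ with $Q\rest A^*$ induced by a finite-to-one $q\colon A\to\w$ and $Q\rest(\w\times\Z\setminus A)^*$ of nowhere dense image; surjectivity of $Q$ forces $q[A]$ cofinite. Since $t$ has only infinite orbits and $q$ is finite-to-one, the intertwining relation prevents $q[A]$ from meeting a finite cycle of $p$ more than finitely often — such a cycle would pull back to a finite $t$-almost-invariant set, which cannot exist — so $q[A]\cap C$ is finite, and, as $q[A]$ is cofinite, $C$ is finite: $p$ is acyclic. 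Running the same argument with $r^*$ in place of $t^*$, every point of $q[A]$ turns out to be $p$-periodic (it lies in the image of some finite $r$-cycle, hence is fixed by a power of $p$), so $q[A]$ cofinite makes all but finitely many points of $\w$ $p$-periodic; thus $p$ is cyclic.

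\emph{Sufficiency.} For (2), a cyclic $p$ admits an explicit trivial quotient $r^*\to p^*$: $r$ has exactly one cycle of each size $n!$, and for each $k$ all but finitely many $n!$ are divisible by $k$, so one assigns injectively to each finite cycle of $p$ an $r$-cycle whose period it divides (wrapping the long cycle around the short one) and sends every leftover $r$-cycle onto a single fixed $p$-cycle; the resulting map is finite-to-one, cofinite-to-one, and intertwines $r$ with $p$. For (1), when $p$ is acyclic \emph{and has infinitely many $\Z$-orbits}, the same bookkeeping gives a trivial quotient $t^*\to p^*$: identify the columns $\{n\}\times\Z$ of $\w\times\Z$ bijectively (by an index shift) with the $\Z$-orbits of $p$, divert finitely many columns to cover the finitely many $\N$-orbits (each run forward along an $\N$-orbit and backward along a fixed $\Z$-orbit, at the cost of one exceptional point), and ignore the finite cyclic part.

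\emph{The main obstacle, and (3).} The hard case is the sufficiency half of (1) when $p$ is acyclic but has only finitely many $\Z$-orbits, the case $p=s$ being representative: here the intertwining relation forces almost every column of $\w\times\Z$ to map bijectively onto a bi-infinite $p$-orbit, and finitely many $\Z$-orbits cannot absorb infinitely many columns while staying finite-to-one, so \emph{no} trivial quotient $t^*\to p^*$ exists — one must instead exhibit $p^*$ as a genuinely non-trivial continuous quotient of $t^*$, i.e.\ as the image of $t^*$ under a $t^*$-invariant closed equivalence relation not coming from any point map. I would try to build such a quotient by choosing a $t$-almost-invariant $A\subseteq\w\times\Z$ on which a finite-to-one $A\to\w$ \emph{can} be made to intertwine — restricting to $A$ discards part of each orbit, which is exactly what relieves the obstruction above — defining the quotient trivially on $A^*$ and then extending it continuously across the complementary, nowhere-dense-image part, exploiting that under \ocama one must match only the global intertwining and not a global point map. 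Granting (1) and (2), part (3) follows by pasting: given arbitrary $p$, if $C$ is finite then $p\up\embeds t\up$ by (1); otherwise map the $t$-half of $(t\vee r)^*$ onto the acyclic system $p^*\rest(Z\cup N)^*$ (via (1)) and the $r$-half onto the cyclic system $p^*\rest C^*$ (via (2)), and paste to realize $p^*$ as a quotient of $(t\vee r)^*$, i.e.\ $p\up\embeds(t\vee r)\up$.
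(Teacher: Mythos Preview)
Your necessity arguments and the easy sufficiency cases (part (2), and part (1) when $p$ has infinitely many $\Z$-orbits) are fine and match the paper. The genuine gap is exactly where you flag it: the hard case of (1), where $p$ is acyclic with only finitely many $\Z$-orbits. Your proposed fix --- define a trivial quotient on a large $A^*$ and ``extend continuously across the complementary, nowhere-dense-image part'' --- is not a construction; you have not said what the extension is or why it intertwines, and invoking \ocama here is a red herring (the sufficiency direction is pure \zfc). The paper's missing ingredient is a short \zfc lemma you never mention: for \emph{any} autohomeomorphism $\varphi$ of $\w^*$ and any fixed $\U_0 \in \w^*$, the map $\U \mapsto \U\text{-}\lim_{n \in \Z} \varphi^n(\U_0)$ is a continuous map $\b\Z \to \w^*$ intertwining $\b z$ with $\varphi$. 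Restricting to $\N^*$ or to $(\Z\setminus\N)^*$ gives subquotients from $s^*$ and $(s^*)^{-1}$ to $\varphi$, and pasting one of these with the identity on the other half shows $z^* \quotient s^*$ and $z^* \quotient (s^*)^{-1}$ in \zfc. Now if $p$ has $k$ orbits in total, send $k$ columns of $t$ onto $p^*$ via these maps, collapse the remaining infinitely many columns onto a single $\b z$, and compose with a subquotient $\b z \to p^*$; pasting gives the desired (non-trivial) quotient $t^* \to p^*$.

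The same omission breaks your proof of (3). Your pasting argument assumes $Z \cup N$ is infinite so that (1) applies to $p \rest (Z \cup N)$; but if $p$ is purely cyclic, there is nothing for the $t$-half of $(t \vee r)^*$ to map onto, and you cannot simply discard it. The paper again uses the $\U$-limit subquotient: collapse $t^*$ onto $\b z$ and then subquotient into $p^*$, while the $r$-half surjects onto $p^*$ by (2). So the single idea you are missing --- orbit-closures via ultrafilter limits, giving subquotients from $\b z$ into arbitrary systems --- is what closes both gaps.
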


Part $(3)$ of this theorem asserts there is a jointly universal pair of automorphisms under \ocama. We do not know whether this pair can be trimmed down to a single universal automorphism:

\begin{question}\label{q:univ}
$(\ocama)$ Is $(t \vee r)\up$ a universal automorphism?
\end{question}

If $f: X \to X$ and $g: Y \to Y$ are dynamical systems, then a \emph{subquotient mapping} from $g$ to $f$ is a continuous (but not necessarily surjective) function $q: Y \to X$ such that $q \circ g = f \circ q$. 

Let $z$ denote the permutation of $\Z$ mapping $n$ to $n+1$.

\vspace{2mm}\begin{center}
\begin{tikzpicture}[style=thin]

\draw (-3,-1.7) node {};
\draw (-3,-1.3) node {};
\draw (-3,-1.5) node {\large$z$};
\draw (-1.2,-1.5) node {$\dots$};
\draw[fill=black] (0,-1.5) circle (1.5pt);
\draw[fill=black] (1,-1.5) circle (1.5pt);
\draw[fill=black] (2,-1.5) circle (1.5pt);
\draw[fill=black] (3,-1.5) circle (1.5pt);
\draw[fill=black] (4,-1.5) circle (1.5pt);
\draw (5.2,-1.5) node {$\dots$};
\draw[->] (-.5,-1.5) -- (-.15,-1.5);
\draw[->] (.15,-1.5) -- (.85,-1.5);
\draw[->] (1.15,-1.5) -- (1.85,-1.5);
\draw[->] (2.15,-1.5) -- (2.85,-1.5);
\draw[->] (3.15,-1.5) -- (3.85,-1.5);
\draw[->] (4.15,-1.5) -- (4.5,-1.5);

\end{tikzpicture}\end{center}\vspace{2mm}

\begin{lemma}\label{lem:subquotient}
$(\zfc)$ For every autohomeomorphism $\varphi$ of $\w^*$, there is a subquotient mapping from $\b z$ to $\varphi$. Consequently, there is a subquotient mapping from $s^*$ to $\varphi$ and there is a subquotient mapping from $(s^*)^{-1}$ to $\varphi$.
\end{lemma}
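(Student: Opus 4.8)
The plan is as follows. Given an autohomeomorphism $\varphi$ of $\w^*$, I would fix any point $x_0 \in \w^*$; since $\varphi$ is invertible, $\varphi^n(x_0)$ is defined for every $n \in \Z$. Define $e \colon \Z \to \w^*$ by $e(n) = \varphi^n(x_0)$, and — using that $\w^*$ is compact Hausdorff, so that maps from the discrete space $\Z$ into it extend to $\b\Z$ — extend $e$ to a continuous map $q \colon \b\Z \to \w^*$. I claim that this $q$ is a subquotient mapping from $\b z$ to $\varphi$.

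To verify this I would check the equation $q \circ \b z = \varphi \circ q$ on the dense subset $\Z \subseteq \b\Z$, where it is immediate from $q(\b z(n)) = q(n+1) = \varphi^{n+1}(x_0) = \varphi(\varphi^n(x_0)) = \varphi(q(n))$. Since $q \circ \b z$ and $\varphi \circ q$ are continuous maps into the Hausdorff space $\w^*$ that agree on a dense set, they coincide; because subquotient mappings need not be surjective, nothing further is required. For the two consequences I would run the identical construction with the index set $\w$ in place of $\Z$: letting $Q \colon \b\w \to \w^*$ be the continuous extension of $n \mapsto \varphi^n(x_0)$ on $\w$, the same density argument gives $Q \circ \b s = \varphi \circ Q$ on $\b\w$; since $s$ is finite-to-one, $\b s$ carries $\w^*$ into $\w^*$ with $\b s \rest \w^* = s^*$, so $Q \rest \w^*$ is a subquotient mapping from $s^*$ to $\varphi$. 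Applying this last statement to $\varphi^{-1}$ (still an autohomeomorphism of $\w^*$) yields a continuous $q'$ with $q' \circ s^* = \varphi^{-1} \circ q'$, and rearranging gives $q' \circ (s^*)^{-1} = \varphi \circ q'$, so $q'$ is a subquotient mapping from $(s^*)^{-1}$ to $\varphi$. (Alternatively, the consequences can be deduced from the first assertion by composing $q$ with the Stone extension of the inclusion $\w \hookrightarrow \Z$, which restricts to an embedding $\w^* \hookrightarrow \Z^*$ intertwining $s^*$ with $\b z \rest \Z^*$.)

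I do not expect a genuine obstacle here: the entire content of the lemma is the soft observation that pushing a single orbit forward along $\varphi$ and closing up yields an equivariant map out of $\b\Z$, precisely because no surjectivity is demanded of it. The only point requiring a little care is the bookkeeping between a Stone extension $\b f$ on $\b D$ and its restriction $f^*$ to the remainder $D^*$ — in particular, using that $s$ and $z$ are finite-to-one (indeed injective) so that $\b s$ and $\b z$ restrict properly to the relevant remainders.
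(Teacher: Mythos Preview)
Your proposal is correct and essentially identical to the paper's proof: the paper too picks a point $\U_0 \in \w^*$ and defines the subquotient mapping as the Stone extension of $n \mapsto \varphi^n(\U_0)$, written there in ultrafilter-limit notation as $\U \mapsto \U\text{-}\lim_{n \in \Z} \varphi^n(\U_0)$. For the consequences, the paper simply observes that $\b z \rest \N^*$ is isomorphic to $s^*$ and $\b z \rest (\Z \setminus \N)^*$ to $(s^*)^{-1}$ and restricts---exactly your parenthetical alternative---so your direct construction with domain $\w$ is a minor variation but not a different idea.
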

\begin{proof}
Let $\varphi$ be any autohomeomorphism of $\w^*$, and let $\U_0$ be any element of $\w^*$. Then
$$\U \mapsto \ulimmm \varphi^n(\U_0)$$
is a subquotient mapping from $\b z$ to $\varphi$. It is clear that $\b z \rest \N^*$ is isomorphic to $s^*$ and that $\b z \rest (\Z \setminus \N)^*$ is isomorphic to $(s^*)^{-1}$, so the ``consequently'' part of the lemma follows.
\end{proof}

\begin{lemma}\label{lem:z}
$(\zfc)$ Both $s^*$ and $(s^*)^{-1}$ are quotients of $z^*$.
\end{lemma}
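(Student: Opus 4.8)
The plan is to observe that, as a $z^*$-system, the remainder $\Z^*$ splits as a disjoint union of two invariant clopen pieces — one a copy of $s^*$ and the other a copy of $(s^*)^{-1}$ — and then to realize each of $s^*$ and $(s^*)^{-1}$ as a quotient of $z^*$ by a ``retraction'' onto one of the pieces: the identity (up to the relevant isomorphism) on the target piece, and a dynamics-preserving, not necessarily surjective map on the complementary piece, the latter being exactly what Lemma~\ref{lem:subquotient} supplies.

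In detail, I would first use the clopen partition $\Z = \N \sqcup (\Z \setminus \N)$, which induces a clopen partition $\Z^* = \N^* \sqcup (\Z \setminus \N)^*$. Since $z(\N) = \N \setminus \{0\}$ and $z(\Z\setminus\N) = (\Z\setminus\N)\cup\{0\}$ differ from $\N$ and $\Z\setminus\N$ only finitely, the homeomorphism $z^* = \b z \rest \Z^*$ carries each of the two pieces onto itself; and, as is noted in the proof of Lemma~\ref{lem:subquotient}, $z^* \rest \N^*$ is isomorphic to $s^*$ (via the evident identification of $\N$ with $\w$) while $z^* \rest (\Z\setminus\N)^*$ is isomorphic to $(s^*)^{-1}$. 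To see that $s^*$ is a quotient of $z^*$, let $\iota\colon \N^* \to \w^*$ be the homeomorphism witnessing $z^* \rest \N^* \cong s^*$, and apply Lemma~\ref{lem:subquotient} with $\varphi = s^*$ to get a subquotient mapping from $(s^*)^{-1}$ to $s^*$; transporting the latter along the isomorphism $z^* \rest (\Z\setminus\N)^* \cong (s^*)^{-1}$ produces a continuous $q\colon (\Z\setminus\N)^* \to \w^*$ with $q \circ (z^* \rest (\Z\setminus\N)^*) = s^* \circ q$. The glued map $\pi = \iota \cup q\colon \Z^* \to \w^*$ is then continuous (being continuous on each clopen piece of $\Z^*$), surjective (already $\iota$ maps $\N^*$ onto $\w^*$), and satisfies $\pi \circ z^* = s^* \circ \pi$ by a check carried out separately on $\N^*$ and on $(\Z\setminus\N)^*$; hence $s^*$ is a quotient of $z^*$. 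The argument that $(s^*)^{-1}$ is a quotient of $z^*$ is identical with the roles of $\N^*$ and $(\Z\setminus\N)^*$ exchanged: now $z^* \rest (\Z\setminus\N)^* \cong (s^*)^{-1}$ furnishes the surjective piece, and Lemma~\ref{lem:subquotient} with $\varphi = (s^*)^{-1}$ furnishes a subquotient mapping from $s^*$ to $(s^*)^{-1}$ for the complementary piece.

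I do not expect a real obstacle here; the only point warranting care is that the restriction of $z^*$ to an invariant clopen subsystem is \emph{not} automatically a quotient of $z^*$ — one must extend the would-be retraction over the complementary clopen subsystem compatibly with the dynamics — and it is precisely the existence of subquotient mappings into $s^*$ and into $(s^*)^{-1}$ (Lemma~\ref{lem:subquotient}) that makes this extension possible, so the substance of that lemma is essentially imported wholesale. As an alternative route to the second statement, one can instead note that $z$ is conjugated to $z^{-1}$ by the bijection $n \mapsto -n$ of $\Z$, so that $(z^*)^{-1} \cong z^*$; composing the quotient map $\Z^* \to \w^*$ obtained above with this conjugacy then exhibits $(s^*)^{-1}$ as a quotient of $z^*$ directly.
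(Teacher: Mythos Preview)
Your proof is correct and follows essentially the same approach as the paper's: split $\Z^*$ into the clopen invariant pieces $\N^*$ and $(\Z\setminus\N)^*$, use the isomorphism on one piece and a subquotient mapping (from Lemma~\ref{lem:subquotient}) on the other, and paste. Your alternative route for $(s^*)^{-1}$ via the conjugacy $n\mapsto -n$ is a pleasant shortcut the paper does not mention.
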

\begin{proof}
To find a quotient mapping from $z^*$ to $s^*$, let $q_+$ be an isomorphism from $z^* \rest \N^*$ to $s^*$ and let $q_-$ be a subquotient mapping from $z^* \rest (\Z \setminus \N)^*$ to $s^*$. Pasting these together, $q_+ \cup q_-$ is a quotient mapping from $z^*$ to $s^*$. A quotient mapping from $z^*$ to $(s^*)^{-1}$ can be found in exactly the same way.
\end{proof}

\begin{proof}[Proof of Theorem~\ref{thm:ocamauniv}]
To prove $(1)$, let $p$ be an acyclic mod-finite permutation of $\w$. First suppose $p$ has infinitely many $\Z$-orbits and $k$ other orbits (either $\N$-orbits or backwards $\N$-orbits). Let $B$ denote the union of these $k$ orbits. Let $A \sub \w$ denote the union of some (any) $k$ orbits of $t$. By Lemma~\ref{lem:z}, there is a quotient mapping from $t^* \rest A^*$ to $p^* \rest B^*$. Furthermore, $t^* \rest (\w \setminus A)^*$ and $p^* \rest (\w \setminus B)^*$ are isomorphic, since both $t \rest (\w \setminus A)$ and $p \rest (\w \setminus B)$ consist of infinitely many $\Z$-orbits. By pasting together a quotient mapping $t^* \rest A^* \to p^* \rest B^*$ and an isomorphism $t^* \rest (\w \setminus A)^* \to p^* \rest (\w \setminus B)^*$, we obtain a quotient mapping $t^* \to p^*$. 

Next suppose $p$ has only finitely many $\Z$ orbits. Let $k$ denote the total number of orbits of $p$. Let $A$ denote the union of any $k$ of the orbits of $t$, and let $B = \w \times \Z \setminus A$. Using Lemma~\ref{lem:z}, it is clear that there is a quotient mapping from $t^* \rest A^*$ to $p^*$. $B$ consists of infinitely many $\Z$-orbits, and by collapsing them all onto $\Z$ in the natural way we obtain a quotient mapping from $t^* \rest B^*$ to $\b z$. Composing this with a subquotient mapping from $\b z$ to $p^*$, we get a subquotient mapping from $t^* \rest B^*$ to $p^*$. Pasting this together with a quotient mapping from $t^* \rest A^*$ to $p^*$ gives the desired quotient mapping from $t^*$ to $p^*$.

To prove $(2)$, let $c$ be a cyclic permutation of $\w$. It is easy to find a finite-to-one map $q: \bigcup_{n \in \N}\{n\} \times n! \rightarrow \w$ such that $q \circ r = c \circ q$. Then $q^*$ is a quotient mapping from $r^*$ to $c^*$.

For $(3)$, let $p$ be a mod-finite permutation of $\w$. If $p$ is acyclic, then $p^*$ is a quotient of $t^*$ by $(1)$. Suppose $p$ is not acyclic. Let $C$ denote the union of the finite cycles of $p$, and let $A = \w \setminus C$. If $A$ is infinite, then $p^* \rest C^*$ is a quotient of $r^*$ and $p^* \rest A^*$ is a quotient of $t^*$, so pasting quotient mappings together gives a quotient mapping from $(t \vee r)^*$ to $p^*$. If $A$ is finite, then $p^*$ is a quotient of $r^*$ and there is a subquotient mapping from $t^*$ to $p^*$ (there is a natural quotient mapping from $t^*$ to $\b z$, and we may compose this with a subquotient mapping from $\b z$ to $p^*$). Again, pasting these mappings together gives a quotient mapping from $(t \vee r)^*$ to $p^*$.
\end{proof}

Note that this argument shows in \zfc that every trivial automorphism embeds in either $t\up$ or $(t \vee r)\up$. \ocama is only used to eliminate the possibility of nontrivial automorphisms.

\begin{question}
$(\zfc)$ Does every automorphism embed in a trivial one?
\end{question}

If the answer to this question is positive, then, in \zfc, we have a jointly universal pair of automorphisms of $\pwmf$.

\begin{question}\label{q:subquotient}
$(\zfc)$ Is there a subquotient mapping from $r^*$ to $s^*$?
\end{question}

If the answer to Question~\ref{q:subquotient} is positive, then so is the answer to Question~\ref{q:univ}. By the proof of Theorem~\ref{thm:ocamauniv}$(3)$, $(t \vee r)\up$ fails to be universal only if there is no quotient mapping from $(t \vee r)^*$ to some acyclic permutation $p^*$. If there is a subquotient mapping from $r^*$ to $s^*$, then this cannot happen (find a quotient mapping from $t^*$ onto $p^*$, and paste this together with a subquotient mapping from $r^*$ to $s^*$ to $p^*$).

If the answer to both of these questions is positive, then it would imply (in \zfc) that $(t \vee r)\up$ is a universal automorphism of $\pwmf$.

\section{Proof of the main lemma}\label{sec:proof}

This section contains the proof of Theorem~\ref{thm:main}. Before beginning the proof, we will need several definitions and lemmas concerning ultrafilter limits.

\subsection{Limits along an ultrafilter, part I.}

Suppose $X$ is a compact Hausdorff space and $f: D \to X$ is a function. Then there is a unique continuous function $\b f: \b\w \to X$ that extends $f$, the \emph{Stone extension} of $f$. For a $D$-indexed sequence $\seq{x_n}{n \in D}$ of points in $X$ and $\U \in \b D$, we will write $\ulim x_n$ for the image of $\U$ under the Stone extension of the function $n \mapsto x_n$. Equivalently,
$$x = \ulim x_n \ \ \Leftrightarrow \ \ \set{n \in D}{x_n \in V} \in \U \text{ for every neighborhood } V \ni x.$$
Thus every ultrafilter $\U \in D^*$ gives rise to an operator $\ulim$ on $D$-indexed sequences in compact Hausdorff spaces, which picks out a single limit point of the sequence.

Furthermore, these operators commute with continuous functions:

\begin{lemma}\label{lem:ulims1}
Let $X$ be a compact Hausdorff space, and let $\seq{x_n}{n \in D}$ be a $D$-indexed sequence of points in $X$ for some countable set $D$.
\begin{enumerate}
\item If $f: X \to X$ is continuous and $\U \in D^*$, then
$$\textstyle f \!\left( \ulim x_n \right) = \ulim f(x_n).$$
\item If $g: D \to D$ is a finite-to-one function and $\U \in D^*$, then
$$\textstyle \ulim x_{g(n)} = \gulim x_n.$$
\end{enumerate}
\end{lemma}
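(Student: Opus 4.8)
The plan is to prove both parts by reducing them to the defining property of $\U$-limits, namely that $x = \ulim x_n$ if and only if $\set{n \in D}{x_n \in V} \in \U$ for every neighbourhood $V$ of $x$. Throughout, write $h \colon D \to X$ for the function $n \mapsto x_n$, so that $\ulim x_n = \b h(\U)$ by definition, and recall that $\b h$ is the unique continuous extension of $h$ to $\b D$.

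For part $(1)$, the cleanest approach is to observe that both $f \circ \b h$ and $\b h'$ (where $h' \colon D \to X$ is the function $n \mapsto f(x_n) = f(h(n))$) are continuous functions $\b D \to X$ that agree on the dense subset $D \subseteq \b D$: indeed $f(\b h(n)) = f(h(n)) = h'(n)$ for $n \in D$. Since $X$ is Hausdorff, two continuous functions agreeing on a dense set are equal, so $f \circ \b h = \b h'$ everywhere; evaluating at $\U$ gives $f(\ulim x_n) = f(\b h(\U)) = \b h'(\U) = \ulim f(x_n)$. Alternatively, and just as quickly, one can argue pointwise from the neighbourhood characterisation: if $y = \ulim x_n$ and $V$ is a neighbourhood of $f(y)$, then $f^{-1}(V)$ is a neighbourhood of $y$ by continuity, so $\set{n}{x_n \in f^{-1}(V)} = \set{n}{f(x_n) \in V} \in \U$; since this holds for every such $V$, we get $f(y) = \ulim f(x_n)$.

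For part $(2)$, I would again unwind the definitions. Let $y = \gulim x_n$; we must show $y = \ulim x_{g(n)}$. Fix a neighbourhood $V$ of $y$. By definition of the ultrafilter $\gto(\U) = g^*(\U)$ (the image of $\U$ under the Stone extension of $g$, which lands in $D^*$ precisely because $g$ is finite-to-one), we have $\set{m \in D}{x_m \in V} \in g^*(\U)$, which by the description $g^*(\U) = \set{A \sub D}{g^{-1}(A) \in \U}$ means exactly $\set{n \in D}{x_{g(n)} \in V} = g^{-1}(\set{m}{x_m \in V}) \in \U$. As this holds for every neighbourhood $V$ of $y$, we conclude $y = \ulim x_{g(n)}$, as desired. (One could also phrase this as: $n \mapsto x_{g(n)}$ is $h \circ g$, so its Stone extension is $\b h \circ \b g$, and $\b h(\b g(\U)) = \b h(g^*(\U)) = \gto(\U)\text{-}\!\lim x_n$.)

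I do not expect any genuine obstacle here — both statements are essentially bookkeeping with the definition of the Stone extension and the standard fact that $g^{-1}$ pulls members of $g^*(\U)$ back into $\U$. The only point requiring a word of care is that part $(2)$ uses the hypothesis that $g$ is finite-to-one in exactly one place: to guarantee that $g^*(\U) = \b g(\U)$ actually lies in $D^*$ (equivalently, that $\gulim$ is a well-defined operator), which is precisely the fact recorded earlier that $\b g$ maps $D^*$ to $D^*$ when $g$ is finite-to-one. With that observed, the computation of which sets lie in $g^*(\U)$ is immediate from its definition.
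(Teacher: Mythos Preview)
Your argument is correct: both parts follow directly from the neighbourhood characterisation of $\U$-limits (or equivalently from the uniqueness of Stone extensions), and you have identified precisely where the finite-to-one hypothesis is used in part~(2). The paper does not give its own proof of this lemma but simply cites Chapter~3 of Hindman and Strauss, so there is no meaningful comparison of approaches to make; your direct verification is exactly what one would expect.
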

\noindent A proof can be found in Chapter 3 of \cite{H&S}. The same is true for the following lemma.

\begin{lemma}\label{lem:ulims2}
Let $X$ be a compact Hausdorff space, and let $\seq{x_n}{n \in D}$ be a $D$-indexed sequence of points in $X$ for some countable set $D$.
\begin{enumerate}
\item The map $\U \mapsto \ulim x_n$ is a continuous function $D^* \to X$.
\item $x \in X$ is in the image of this function if and only if every neighborhood of $x$ contains infinitely many of the $x_n$. In particular, the map $\U \mapsto \ulim x_n$ is a surjection if and only if every open subset of $X$ contains infinitely many of the $x_n$.
\end{enumerate}
\end{lemma}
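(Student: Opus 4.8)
Part (1) I would dispatch at once: by construction the map $\U \mapsto \ulim x_n$ is nothing but the restriction to $D^* \sub \b D$ of the Stone extension $\b f$ of the function $f \colon D \to X$, $f(n) = x_n$, and $\b f$ is continuous — this is precisely the universal property of $\b D$ recalled just above. Since the restriction of a continuous map to a subspace is continuous, (1) follows with nothing further to say.

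For part (2) the plan is to argue entirely through the pointwise description $x = \ulim x_n \Leftrightarrow \set{n \in D}{x_n \in V} \in \U$ for every neighbourhood $V$ of $x$. The forward implication is quick: if $x = \ulim x_n$ for some $\U \in D^*$ and $V$ is a neighbourhood of $x$, then $\set{n}{x_n \in V} \in \U$, and because $\U$ is a \emph{free} ultrafilter every member of $\U$ is infinite, so $V$ contains infinitely many of the $x_n$. The substantive direction is the converse. Assuming each neighbourhood of $x$ contains infinitely many of the $x_n$, I would form the family
\[
\F \;=\; \set{\set{n \in D}{x_n \in V}}{V \text{ a neighbourhood of } x}\;\cup\;\set{D \setminus F}{F \sub D \text{ finite}},
\]
and check that it has the finite intersection property: a finite intersection of sets of the first kind equals $\set{n}{x_n \in V_1 \cap \cdots \cap V_k}$, which is infinite (hence nonempty) by hypothesis, and an infinite set meets every cofinite set. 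Extending $\F$ to an ultrafilter $\U$, we get $\U \in D^*$ since $\U$ contains all cofinite sets, and $\set{n}{x_n \in V} \in \U$ for every neighbourhood $V$ of $x$; one still has to observe that in a Hausdorff space this \emph{forces} $\ulim x_n = x$ (a putative distinct limit $y$ could be separated from $x$ by disjoint open sets, both of which would then lie in $\U$), so $x$ is in the image.

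The ``in particular'' clause is then a formality: by the preceding equivalence the map is surjective exactly when every $x \in X$ has each of its neighbourhoods containing infinitely many $x_n$, and this is plainly the same as ``every nonempty open subset of $X$ contains infinitely many of the $x_n$'' (for one direction, pass to an arbitrary point of the given open set). I do not anticipate a real obstacle here; the one point that genuinely needs care is the distinction between arbitrary ultrafilters on $D$ (the points of $\b D$) and free ones (the points of $D^*$) — this is exactly why the correct condition in (2) is ``infinitely many of the $x_n$'' rather than merely ``at least one'' — and the only load-bearing step is the finite-intersection bookkeeping that lets the extending ultrafilter be taken free.
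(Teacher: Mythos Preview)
Your argument is correct. Note, however, that the paper does not actually supply its own proof of this lemma: it simply defers to Chapter~3 of \cite{H&S} (the sentence ``The same is true for the following lemma'' after Lemma~\ref{lem:ulims1} applies to this one as well). Your write-up is the standard argument and would be a perfectly acceptable inline proof; the only superfluous step is the Hausdorff separation argument at the end of the converse direction --- since the paper already records the biconditional characterization $x = \ulim x_n \Leftrightarrow \set{n}{x_n \in V} \in \U$ for every neighbourhood $V \ni x$, showing the right-hand side immediately gives $x = \ulim x_n$ without any further appeal to uniqueness.
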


Lemma~\ref{lem:ulims2} tells us how $\U$-limits can be used to define continuous surjections with domain $D^*$. To prove the main lemma, we need to develop a generalization of Lemma~\ref{lem:ulims2} that tells us when $f(\ulim y_n) = \ulim z_n$, even if $f$ is not defined on any of the $y_n$. In the proof of the main lemma, we will have functions $\psi_p$ defined on a subspace $X$ of $[0,1]^{\w_1}$, but with none of the $\psi_p$ defined on $[0,1]^{\w_1} \setminus X$. In that setting, we will need to know how $\psi_p(\ulim y_n)$ behaves, even if some (or all) of the $y_n$ are in $[0,1]^{\w_1} \setminus X$.

\begin{definition}
Let $Y$ be a topological space and let $D$ be a countable set.
 Two $D$-indexed sequences $\seq{x_n}{n \in D}$ and $\seq{y_n}{n \in D}$ of points in $Y$ are \emph{tail-similar} if, for every open cover $\O$ of $Y$, $x_n \approx_\O y_n$ for all but finitely many $n \in D$.
\end{definition}

\begin{lemma}\label{lem:ulims3}
Let $Y$ be a compact Hausdorff space, let $D$ be a countable set, and let $\seq{x_n}{n \in D}$ and $\seq{y_n}{n \in D}$ be $D$-indexed sequences of points in $Y$.
\begin{enumerate}
\item $x = \ulim x_n$ if and only if $\set{n}{x \approx_\O x_n} \in \U$ for every open cover $\O$ of $Y$.
\item $\ulim x_n = \ulim y_n$ if and only if $\set{n}{x_n \approx_\O y_n} \in \U$ for every open cover $\O$ of $Y$.
\item $\ulim x_n = \ulim y_n$ for all $\U \in D^*$ if and only if the sequences $\seq{x_n}{n \in D}$ and $\seq{y_n}{n \in D}$ are tail-similar.
\end{enumerate}
\end{lemma}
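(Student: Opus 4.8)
The plan is to reduce all three parts to the defining property of a $\U$-limit — that $\ulim x_n = x$ if and only if $\{n : x_n \in V\} \in \U$ for every open neighbourhood $V$ of $x$ — together with the single structural feature of compact Hausdorff spaces that is really needed here, namely regularity. No uniformities or star/barycentric refinements enter.

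For part (1), the forward direction is immediate: given an open cover $\O$, pick $U \in \O$ with $x \in U$; then $U$ is an open neighbourhood of $x$, so $\{n : x_n \in U\} \in \U$, and $\{n : x_n \in U\} \sub \{n : x \approx_\O x_n\}$. For the converse, given an open $V \ni x$, apply the hypothesis to the open cover $\O = \{V,\, Y \setminus \{x\}\}$ (open because $Y$ is $T_1$): the only member of $\O$ containing $x$ is $V$, so $x \approx_\O x_n$ forces $x_n \in V$, and hence $\{n : x \approx_\O x_n\} \sub \{n : x_n \in V\} \in \U$. Part (2) runs along the same lines. If the two limits coincide, with common value $x$, then for any open cover $\O$ pick $U \in \O$ with $x \in U$ and note that $\{n : x_n \in U\} \cap \{n : y_n \in U\} \in \U$ is contained in $\{n : x_n \approx_\O y_n\}$. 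Conversely, assume the right-hand condition and set $x = \ulim x_n$; to prove $\ulim y_n = x$, fix an open $V \ni x$, use regularity to choose an open $W$ with $x \in W \sub \closure W \sub V$, and apply the hypothesis to $\O = \{V,\, Y \setminus \closure W\}$. Since $W$ is disjoint from $Y \setminus \closure W$, any member of $\O$ meeting $W$ must equal $V$, so $\{n : x_n \in W\} \cap \{n : x_n \approx_\O y_n\} \sub \{n : y_n \in V\}$, and both sets on the left belong to $\U$.

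Part (3) then follows formally from (2). For the backward direction, tail-similarity makes each set $\{n : x_n \approx_\O y_n\}$ cofinite, hence a member of every $\U \in D^*$, so (2) gives $\ulim x_n = \ulim y_n$ for all such $\U$. For the forward direction I argue contrapositively: if the sequences are not tail-similar, fix an open cover $\O$ for which $A = \{n : x_n \not\approx_\O y_n\}$ is infinite, take $\U \in D^*$ with $A \in \U$, and observe that $\{n : x_n \approx_\O y_n\} = D \setminus A \notin \U$; by (2) this means $\ulim x_n \neq \ulim y_n$.

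The only genuine content is the choice of the two-element auxiliary covers $\{V, Y \setminus \{x\}\}$ and $\{V, Y \setminus \closure W\}$, the device that lets the combinatorial relation $\approx_\O$ detect membership in a prescribed open set; the converse of (2) is the step where regularity (rather than mere Hausdorffness) is used. I do not anticipate a real obstacle — one need only keep straight that compactness is precisely what guarantees the relevant $\U$-limits exist and are unique, so that the statements make sense as written.
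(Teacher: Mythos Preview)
Your proof is correct and follows essentially the same approach as the paper's. The paper's proof is terser---it proves (1) by a contrapositive version of your direct argument, then simply asserts that (2) follows from (1) and (3) from (2)---whereas you spell out (2) directly using the two-element cover $\{V, Y \setminus \closure W\}$ and regularity; but the underlying device (choosing a tailored cover so that $\approx_\O$ forces membership in a prescribed open set) is the same.
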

\begin{proof}
For $(1)$, the ``if'' part follows easily from the definition of a $\U$-limit. For the ``only if'' part, suppose $\set{n \in D}{x \approx_\O x_n} \notin \U$ for some open cover $\O$. Let $V$ be any member of $\O$ containing $x$. Then $\set{n \in D}{x_n \in V} \notin \U$, which yields $\set{n \in D}{x_n \in Y \setminus V} \in \U$. By the definition of the $\U$-limit, this implies $\ulim x_n \in Y \setminus V$, so that $\ulim x_n \neq x$, completing the proof of $(1)$.
$(2)$ follows easily from $(1)$ and $(3)$ follows easily from $(2)$.
\end{proof}

\begin{definition}
Let $X$ be a closed subset of a compact Hausdorff space $Y$, and $f: X \to X$.
If $\O$ is an open cover of $Y$ and $y,z \in Y$, then we write $y \mapstoof z$
to mean that there is some $x \in X$ with $x \approx_\O y$ and $f(x) \approx_\O z$.
\end{definition}

\noindent  The symbol $y \mapstoof z$ is read ``$f$ maps $y$ to $z$ modulo $\O$" and can be thought of as asserting that the expression ``$f(y)=z$'', though it may formally be mere nonsense, is approximately correct.

\vspace{2mm}\begin{center}
\begin{tikzpicture}[style=thick, xscale=1,yscale=.8]

\draw (-1.5,-2) -- (-1.5,3) -- (9,3) -- (9,-2) -- (-1.5,-2);
\draw[dotted,fill=black!5] (1.5,.3) circle (1); 
\draw[dotted,fill=black!5] (6.5,.8) circle (.75);
\draw[line width=.75mm] (0,-1) .. controls (3,.5) and (5,1) .. (8,1);
\draw (1.2,.9) node [circle, draw, fill=black, inner sep=0pt, minimum width=3pt] {} -- (1.2,.9);
\draw (1.43,.87) node {\small $y$} -- (1.43,.87);
\draw (6.7,.23) node [circle, draw, fill=black, inner sep=0pt, minimum width=3pt] {} -- (6.7,.23);
\draw (6.51,.32) node {\small $z$} -- (6.51,.32);
\draw (3.5,1.4) node {$f$} -- (3.5,1.4);
\draw[->] (2.3,.2) .. controls (3.3,1.25) and (4.7,1.45) .. (6,1.05);
\draw (2.22,0) node [circle, draw, fill=black, inner sep=0pt, minimum width=4pt] {} -- (2.22,0);
\draw (2.02,.16) node {\small $x$} -- (2.02,.16);
\draw (6.18,.93) node [circle, draw, fill=black, inner sep=0pt, minimum width=4pt] {} -- (6.18,.93);
\draw (6.5,1.2) node {\scriptsize $f(x)$} -- (6.5,1.2);
\draw (-.25,-1) node {$X$} -- (-.25,-1);
\draw (-.5,2.2) node {$Y$} -- (-.5,2.2);
\draw (4.6,-.7) node {\Large $y \mapstoof z$} -- (4.6,-.7);

\end{tikzpicture}\end{center}\vspace{2mm}

\begin{lemma}\label{lem:ulims4}
Let $X$ be a closed subspace of a compact Hausdorff space $Y$, and let $D$ be a countable set. Let $f: X \to X$ be continuous and let $\seq{y_n}{n \in D}$ and $\seq{z_n}{n \in D}$ be $D$-indexed sequences of points in $Y$. The following are equivalent:
\begin{enumerate}
\item For every open cover $\O$ of $Y$, $y_n \mapstoof z_n$ for all but finitely many $n \in D$.
\item For every $\U \in D^*$, $\ulim y_n$ and $\ulim z_n$ are both in $X$, and $$\textstyle f(\ulim y_n) = \ulim z_n.$$
\end{enumerate} 
\end{lemma}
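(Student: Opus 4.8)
The plan is to prove the two implications separately; $(2)\Rightarrow(1)$ is quick, whereas $(1)\Rightarrow(2)$ needs some bookkeeping with open covers that mimics the familiar $\nicefrac{\e}{3}$-argument from the metric setting. For $(2)\Rightarrow(1)$ I would argue by contradiction: if $(1)$ fails, fix an open cover $\O$ of $Y$ and an infinite set $I\sub D$ such that the relation $y_n\mapstoof z_n$ fails for every $n\in I$, and choose any $\U\in D^*$ with $I\in\U$. By $(2)$, the point $u=\ulim y_n$ lies in $X$ and $f(u)=\ulim z_n$. Applying Lemma~\ref{lem:ulims3}(1) to this very cover $\O$, the sets $\set{n}{u\approx_\O y_n}$ and $\set{n}{f(u)\approx_\O z_n}$ both belong to $\U$, so their intersection with $I$ is nonempty; for any $n$ in it the point $x=u\in X$ satisfies $x\approx_\O y_n$ and $f(x)\approx_\O z_n$, i.e.\ it witnesses $y_n\mapstoof z_n$, contradicting $n\in I$.

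For $(1)\Rightarrow(2)$, fix $\U\in D^*$; I would first verify that $\ulim y_n$ and $\ulim z_n$ lie in $X$. Suppose $u=\ulim y_n\notin X$. Since $Y$ is compact Hausdorff, hence regular, and $X$ is closed, there is an open $V\ni u$ with $\closure{V}\cap X=\0$; form the open cover $\O=\{\,Y\setminus\closure{V},\ Y\setminus X\,\}$. By $(1)$ the set $\set{n}{y_n\mapstoof z_n}$ is cofinite, hence in $\U$. But when $y_n\mapstoof z_n$, a witness $x\in X$ has $x\approx_\O y_n$, and any member of $\O$ containing $x$ must be $Y\setminus\closure{V}$ (it is not $Y\setminus X$, since $x\in X$); hence $y_n\in Y\setminus\closure{V}$, so $y_n\notin V$. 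Thus $\set{n}{y_n\mapstoof z_n}$ is disjoint from $\set{n}{y_n\in V}$ --- impossible, since both are in $\U$ (the latter because $V$ is a neighborhood of $u=\ulim y_n$). Hence $\ulim y_n\in X$; an identical argument, now using the $f(x)\approx_\O z_n$ half of a witness and the fact that $f(x)\in X$, shows $\ulim z_n\in X$.

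Next, write $u=\ulim y_n$ and $v=\ulim z_n$, now known to lie in $X$, and suppose toward a contradiction that $f(u)\neq v$. Using that $Y$ is Hausdorff, pick disjoint open sets $P\ni f(u)$ and $Q\ni v$; continuity of $f\colon X\to X$ gives an open $R\ni u$ with $f(R\cap X)\sub P$, and regularity gives open sets $R'\ni u$ and $Q'\ni v$ with $\closure{R'}\sub R$ and $\closure{Q'}\sub Q$. Let $\O$ be the common refinement of the open covers $\{R,\ Y\setminus\closure{R'}\}$ and $\{Q,\ Y\setminus\closure{Q'}\}$, so every member of $\O$ is contained in a member of each. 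Since $R'\ni u=\ulim y_n$ and $Q'\ni v=\ulim z_n$, the sets $\set{n}{y_n\in R'}$ and $\set{n}{z_n\in Q'}$ lie in $\U$, as does $\set{n}{y_n\mapstoof z_n}$ by $(1)$; pick $n$ in all three. A witness $x\in X$ for $y_n\mapstoof z_n$ has $x\approx_\O y_n$, and a member $W\in\O$ with $x,y_n\in W$ cannot satisfy $W\sub Y\setminus\closure{R'}$ (because $y_n\in R'$), so $W\sub R$ and $x\in R\cap X$, whence $f(x)\in P$; symmetrically, $f(x)\approx_\O z_n$ together with $z_n\in Q'$ forces $f(x)\in Q$, contradicting $P\cap Q=\0$. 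Therefore $f(u)=v$, which is the asserted equality.

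The one genuinely delicate step is this last construction: one must arrange the cover $\O$ to be fine enough that ``$x\approx_\O y_n$ with $y_n$ sitting well inside $R$'' forces $x\in R$, and likewise on the $z$-side, which is exactly what passing to the common refinement of the covers built from the nested pairs $\closure{R'}\sub R$ and $\closure{Q'}\sub Q$ accomplishes. Everything else reduces to the standard facts that cofinite subsets of $D$, and open neighborhoods of $\U$-limit points, always lie in $\U$.
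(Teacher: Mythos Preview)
Your proof is correct and follows essentially the same approach as the paper's: both directions are argued by contradiction, using the $\U$-limit point itself as the witness $x$ for $(2)\Rightarrow(1)$, and a continuity-and-refinement argument for $(1)\Rightarrow(2)$. The only real difference is that where the paper invokes an external lemma (Lemma~3.1 of \cite{Brian}) to produce the crucial open cover in the $(1)\Rightarrow(2)$ direction, you construct it explicitly via the nested pairs $\closure{R'}\sub R$ and $\closure{Q'}\sub Q$; you also spell out the ``limits lie in $X$'' step that the paper dismisses in one sentence.
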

\begin{proof}
To prove $(1)$ implies $(2)$, first note that if $(1)$ holds, then the definition of a $\U$-limit implies (using the fact that $\U$ is non-principal) that both $\ulim y_n$ and $\ulim z_n$ are in $X$. Let $y = \ulim y_n$ and $z = f(y)$, and let $V$ be an open neighborhood of $z$ (in $Y$). We need to show that $\set{n}{z_n \in V} \in \U$; then, because $V$ is arbitrary, we will know $\ulim z_n = z$ by definition.

To show $\set{n}{z_n \in V} \in \U$, we make use of Lemma 3.1 from \cite{Brian}, a special case of which states:
\begin{itemize}
\item There is an open cover $\O$ of $Y$ such that if $y',z' \in Y$ with $y' \approx_\O y$ and $y' \mapstoof z'$, then $z' \in V$.
\end{itemize}
(The lemma in \cite{Brian} is stated only for $Y = [0,1]^\dlt$, but one can easily check that the proof does not depend on this.) Heuristically, this lemma just expresses the continuity of $f$, which requires that points near $y$ map to points near $z$. 

Let $\O$ be an open cover of $Y$ as described above. By the definition of a $\U$-limit, we have
$\set{n \in D}{y_n \approx_\O y} \in \U.$
Because $y_n \mapstoof z_n$ for all but finitely many $n \in D$, and because $\U$ is a non-principal ultrafilter on $D$,
$$\set{n \in D}{y_n \approx_\O y \text{ and } y_n \mapstoof z_n} \in \U.$$
By our choice of $\O$,
$$\set{n \in D}{z_n \in V} \supseteq \set{n \in D}{y_n \approx_\O y \text{ and } y_n \mapstoof z_n}$$
so that $\set{n}{z_n \in V} \in \U$ as required.

For the converse, suppose that $\ulim y_n$ and $\ulim z_n$ are both in $X$ for every $\U \in D^*$, but $(1)$ does not hold. We will show that this implies $\textstyle f(\ulim y_n) \neq \ulim z_n$ for some $\U \in D^*$.

If $(1)$ fails, then there is some open cover $\O$ of $Y$ and some infinite $A \sub D$ such that $\neg (y_n \mapstoof z_n)$ for all $n \in A$. Fix $\U \in A^*$ and observe that, by the definition of a $\U$-limit,
$$B = \set{n \in D}{\textstyle y_n \approx_\O \ulim y_n} \in \U.$$
If $n \in B$ and $z_n \approx_\O f(\ulim y_n)$, then $y_n \mapstoof z_n$. Thus, for $n \in B \cap A$, we have $z_n \not\approx_\O f(\ulim y_n)$. But $B \cap A \in \U$, so this implies $\ulim z_n \neq f(\ulim y_n)$ as desired.
\end{proof}

\subsection{$\varphi$-like sequences revisited.}

In this subsection we extend the notion of $\varphi$-like sequences to a broader context, where $X$ is no longer required to be metrizable, and where the sequence of points may lie in some space $Y \supseteq X$. 

\begin{definition}\label{def:philike}
Let $X$ be a closed subset of a compact Hausdorff space $Y$, and let $D$ be a countable set.
\begin{itemize}
\item Let $f: X \to X$ be continuous, and let $p: D \to D$ be any function.
\begin{itemize}
\item[$\circ$] A $D$-indexed sequence $\seq{y_n}{n \in D}$ of points in $Y$ is said to be \emph{$p$-like for $\O$ with respect to $f$}, where $\O$ is an open cover of $Y$, if
$$y_n \mapstoo y_{\varphi(n)} \qquad \text{for all but finitely many }n \in D.$$
When $f$ is clear from context, we say simply that the sequence is $p$-like for $\O$.
\item[$\circ$] If $\seq{y_n}{n \in D}$ is $p$-like for $\O$ for every open cover $\O$ of $Y$, then we say that $\seq{y_n}{n \in D}$ is \emph{$p$-lilke (with respect to $f$)}.
\end{itemize}
\item Let $\psi: S \times X \to X$ be an $S$-flow, and let $\varphi: S \times D \to D$ be an action of $S$ on $D$.
\begin{itemize}
\item[$\circ$] A $D$-indexed sequence $\seq{y_n}{n \in D}$ of points in $Y$ is said to be \emph{$\varphi$-like for $\O$ with respect to $\psi$}, where $\O$ is an open cover of $Y$, if for every $p \in S$ it is $\varphi_p$-like for $\O$ with respect to $\psi_p$.
When $\psi$ is clear from context, we say simply that the sequence is $\varphi$-like for $\O$.
\item[$\circ$] If $\seq{y_n}{n \in D}$ is $\varphi$-like for $\O$ for every open cover $\O$ of $Y$, then we say that $\seq{y_n}{n \in D}$ is {$\varphi$-lilke (with respect to $f$)}.
\end{itemize}
\end{itemize}
\end{definition}

Observe that this new definition contains the old one as a special case, namely when $X=Y$ and $X$ is metrizable.

\vspace{1mm}
\begin{center}
\begin{tikzpicture}[style=thick, xscale=.67,yscale=1]

\draw (-5,-.6) node [circle, draw, fill=black, inner sep=0pt, minimum width=3pt] {} -- (-5,-.6);
\draw (-5,.1) node [circle, draw, fill=black, inner sep=0pt, minimum width=3pt] {} -- (-5,.1);
\draw (-5,.8) node [circle, draw, fill=black, inner sep=0pt, minimum width=3pt] {} -- (-5,.8);
\draw (-5,1.5) node [circle, draw, fill=black, inner sep=0pt, minimum width=3pt] {} -- (-5,1.5);
\draw (-5,2.2) node [circle, draw, fill=black, inner sep=0pt, minimum width=3pt] {} -- (-5,2.2);
\draw (-5,-1.3) node [circle, draw, fill=black, inner sep=0pt, minimum width=3pt] {} -- (-5,-1.3);
\draw[->] (-4.9,2.1) .. controls (-4.7,1.9) and (-4.7,1.8) .. (-4.9,1.6);
\draw[->] (-4.9,1.4) .. controls (-4.7,1.2) and (-4.7,1.1) .. (-4.9,.9);
\draw[->] (-4.9,.7) .. controls (-4.7,.5) and (-4.7,.4) .. (-4.9,.2);
\draw[->] (-4.9,0) .. controls (-4.7,-.2) and (-4.7,-.3) .. (-4.9,-.5);
\draw[->] (-4.9,-.7) .. controls (-4.7,-.9) and (-4.7,-1) .. (-4.9,-1.2);
\draw (-5,-1.7) node {\small $\vdots$} -- (-5,-1.7);
\draw (-5.3,2.18) node {\scriptsize $1$} -- (-5.3,2.18);
\draw (-5.3,1.48) node {\scriptsize $2$} -- (-5.3,1.48);
\draw (-5.3,.78) node {\scriptsize $3$} -- (-5.3,.78);
\draw (-5.3,.08) node {\scriptsize $4$} -- (-5.3,.08);
\draw (-5.3,-.62) node {\scriptsize $5$} -- (-5.3,-.62);
\draw (-5.3,-1.32) node {\scriptsize $6$} -- (-5.3,-1.32);
\node at (-7.5,1) {\small $D = \N$};
\node at (-7.5,.3) {\footnotesize $s(n) = n+1$};

\draw (-1.5,-2.1) -- (-1.5,2.5) -- (9,2.5) -- (9,-2.1) -- (-1.5,-2.1);
\node at (8,-1.6) {$Y$};
\node at (1.5,1.8) {an $s$-like sequence};
\draw[line width=.75mm] (0,-1) .. controls (3,.5) and (5,1) .. (8,1);
\draw (-.25,-1) node {$X$} -- (-.25,-1);

\draw[->] (1.46,-.35) .. controls (1.77,-.5) and (2.07,-.5) .. (2.17,-.1);
\draw (1.92,-.6) node {\scriptsize $f$} -- (1.92,-.6);
\draw[->] (2.36,.02) .. controls (3,-.2) and (3.6,-.2) .. (3.9,.46);
\draw (3.2,-.29) node {\scriptsize $f$} -- (3.2,-.29);
\draw[->] (4.08,.55) .. controls (4.6,.32) and (4.85,.32) .. (5.14,.72);
\draw (4.67,.21) node {\scriptsize $f$} -- (4.67,.21);
\draw[->] (5.33,.83) .. controls (5.56,.62) and (5.8,.62) .. (6,.84);
\draw (5.67,.49) node {\scriptsize $f$} -- (5.67,.49);
\draw[->] (6.1,.93) .. controls (6.3,.77) and (6.5,.77) .. (6.6,.9);
\node at (6.4,.59) {\scriptsize $f$};

\node[circle, draw, fill=black, inner sep=0pt, minimum width=3.5pt] at (1.1,.28) {};
\node at (1,.51) {\scriptsize $x_1$};
\node[circle, draw, fill=black, inner sep=0pt, minimum width=3.5pt] at (2.3,.38) {};
\node at (2.2,.61) {\scriptsize $x_2$};
\node[circle, draw, fill=black, inner sep=0pt, minimum width=3.5pt] at (3.9,.8) {};
\node at (3.8,1.03) {\scriptsize $x_3$};
\node[circle, draw, fill=black, inner sep=0pt, minimum width=3.5pt] at (5.3,.99) {};
\node at (5.2,1.22) {\scriptsize $x_4$};
\node[circle, draw, fill=black, inner sep=0pt, minimum width=3.5pt] at (6.1,1.05) {};
\node at (6,1.28) {\scriptsize $x_5$};
\node[circle, draw, fill=black, inner sep=0pt, minimum width=3.5pt] at (6.65,1.04) {};
\node at (6.7,1.23) {\scriptsize $x_6$};
\node at (7.5,1.25) {\small $\dots$};

\end{tikzpicture}\end{center}\vspace{1mm}


\begin{lemma}\label{lem:psilike}
Let $X$ be a closed subspace of some compact Hausdorff space $Y$, let $\psi$ be an $S$-flow on $X$, and let $\seq{x_n}{n \in D}$ be a $D$-indexed sequence of points in $Y$ for some countable set $D$. If $\varphi$ is an action of $S$ on $D$, then
\begin{enumerate}
\item[$(\dagger)$] The map $\pi: D^* \to Y$ defined by $\pi(\U) = \ulim x_n$ is a quotient mapping from $\varphi^*$ to $\psi$ if and only if
\begin{enumerate}
\item $\pi[D^*] = X$, and
\item $\seq{x_n}{n \in D}$ is $\varphi$-like.
\end{enumerate}
\end{enumerate}
\end{lemma}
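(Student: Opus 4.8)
The plan is to unfold the phrase ``$\pi$ is a quotient mapping from $\varphi^*$ to $\psi$'' into its two constituents — $\pi$ is a surjection onto $X$, and $\pi$ intertwines the actions ($\pi \circ \varphi^*_p = \psi_p \circ \pi$ for every $p \in S$) — and then match each constituent against the hypotheses, using the $\U$-limit lemmas of the previous subsection. Continuity of $\pi$ is automatic from Lemma~\ref{lem:ulims2}(1), so it is these two constituents that need attention. The key rewriting, legitimate because $\varphi$ is separately finite-to-one (which is forced by the statement, since $\varphi^*$ must be defined), is that Lemma~\ref{lem:ulims1}(2) gives
$$\ulim x_{\varphi_p(n)} \;=\; \varphi^*_p(\U)\mbox{-}\lim_{n \in D} x_n \;=\; \pi(\varphi^*_p(\U)).$$
Thus, since $\pi(\U) = \ulim x_n$, the single intertwining equation $\psi_p(\pi(\U)) = \pi(\varphi^*_p(\U))$ says exactly $\psi_p(\ulim x_n) = \ulim x_{\varphi_p(n)}$, with both sides in $X$.

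Next I would invoke Lemma~\ref{lem:ulims4} with the pair $X \subseteq Y$, the continuous map $f = \psi_p : X \to X$, and the two $D$-indexed sequences $y_n = x_n$, $z_n = x_{\varphi_p(n)}$. Its clause (2) is precisely the assertion at the end of the previous paragraph (for this fixed $p$, all $\U \in D^*$), and its clause (1) is precisely: for every open cover $\O$ of $Y$, $x_n \mapstoo x_{\varphi_p(n)}$ for all but finitely many $n$ — i.e.\ $\seq{x_n}{n \in D}$ is $\varphi_p$-like with respect to $\psi_p$. So, for each fixed $p \in S$,
$$\bigl(\,\pi \circ \varphi^*_p = \psi_p \circ \pi \ \text{ and }\ \pi[D^*] \subseteq X\,\bigr) \iff \seq{x_n}{n \in D} \text{ is } \varphi_p\text{-like}.$$
Since $\seq{x_n}{n \in D}$ is $\varphi$-like exactly when it is $\varphi_p$-like for every $p \in S$, quantifying this equivalence over $p$ yields both directions of $(\dagger)$: if (a) and (b) hold, then (b) together with the equivalence gives $\pi[D^*] \subseteq X$ and all the intertwining equations, while (a) strengthens this to $\pi[D^*] = X$, so $\pi$ is a continuous surjection onto $X$ intertwining $\varphi^*$ and $\psi$, i.e.\ a quotient mapping; conversely, if $\pi$ is such a quotient mapping, then (a) is its surjectivity, and the intertwining equations together with the (free) inclusion $\pi[D^*] \subseteq X$ give, via the equivalence, that $\seq{x_n}{n \in D}$ is $\varphi_p$-like for every $p$, which is (b).

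There is no serious obstacle; once Lemmas~\ref{lem:ulims1}--\ref{lem:ulims4} are available this is essentially bookkeeping. The one point to be careful about is to confirm that, after the substitution from Lemma~\ref{lem:ulims1}(2), the intertwining equation really does coincide — for each $p$ separately — with clause (2) of Lemma~\ref{lem:ulims4}, so that its equivalence with clause (1) ($\varphi_p$-likeness) can be applied; and to note that the inclusion $\pi[D^*] \subseteq X$ need not be hypothesized, as it is already part of what Lemma~\ref{lem:ulims4}(2) delivers from $\varphi$-likeness.
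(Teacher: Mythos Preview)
Your proof is correct and follows essentially the same approach as the paper: both use Lemma~\ref{lem:ulims2} for continuity, Lemma~\ref{lem:ulims1}(2) to rewrite $\pi(\varphi^*_p(\U))$ as $\ulim x_{\varphi_p(n)}$, and Lemma~\ref{lem:ulims4} to connect the intertwining equation with $\varphi_p$-likeness. The only difference is that for the ``only if'' direction the paper unpacks a direct contradiction argument (essentially re-proving the $(2)\Rightarrow(1)$ half of Lemma~\ref{lem:ulims4} inline), whereas you simply invoke Lemma~\ref{lem:ulims4} as a two-way equivalence for each $p$; your version is slightly more economical but the content is the same.
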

\begin{proof}
We will prove the ``if'' direction of $(\dagger)$ first. Using $(a)$ and Lemma~\ref{lem:ulims2}, $\pi$ is a continuous surjection from $D^*$ onto $X$. We need to show that $\pi$ preserves the action of $S$, in the sense that $\pi \circ \varphi^*_p = \psi_p \circ \pi$ for all $p \in S$. This is a direct application of Lemma~\ref{lem:ulims4}:
\begin{align*}
\psi_p \circ \pi(\U) & = \psi_p(\pi(\U)) = \psi_p(\textstyle \ulim x_n) \\
& = \ulim x_{\varphi_p(n)} = \phiulim x_n \\
& = \pi(\varphi^*_p(\U)) = \pi \circ \varphi^*_p(\U)
\end{align*}
(Lemma~\ref{lem:ulims4} was applied to get the third equality, and Lemma~\ref{lem:ulims1}(2) to get the fourth).

For the ``only if'' direction, suppose $\pi$ is a quotient mapping and let us prove that (b) holds (note that (a) holds by the definition of a quotient mapping). Aiming for a contradiction, suppose $\seq{x_n}{n \in D}$ is not $\varphi$-like. Fix $p \in S$, an open cover $\O$ of $Y$, and an infinite $A \sub D$ such that, for every $n \in A$, it is false that $x_n \mapstoo x_{\varphi(n)}$; that is, if $n \in A$ and $x_n \approx_{\O} x \in X$, then $\psi_p(x) \not\approx_{\O} x_{\varphi(n)}$.

Let $\U \in D^*$ with $A \in \U$, and let $x = \ulim x_n$. Fix $O \in \O$ with $x \in O$, set $B = \set{n \in A}{x_n \in O}$, and observe that $B \in \U$. Now $x_n \approx_\O x$ for all $n \in B$, and by our choice of $\O$ this means $\psi_p(x) \not\approx_\O x_{\varphi(n)}$ for all $n \in B$. In particular, if $\psi_g(x) \in V \in \O$ then $\set{n \in B}{x_{\varphi_g(n)} \in V} \sub \N \setminus B$. Thus $\set{n \in B}{x_{\varphi_p(n)} \in V} \notin \U$ and $\ulim x_{\varphi_p(n)} \neq \psi_p(x)$. Hence
$$\pi(\varphi^*p(\U)) = \ulim x_{\varphi_p(n)} \neq \psi_p(x) = \psi_p(\pi(\U))$$
so that $\pi$ is not a quotient mapping.
\end{proof}

\subsection{The proof} 

We are finally ready to prove the Main Lemma. We reproduce the statement of the lemma here for convenience:

\begin{theorem34}
Let $S$ be a countable discrete semigroup, let $X$ be a compact Hausdorff space of weight $\leq\!\aleph_1$, and let $D$ be a countable set. Let $\varphi$ be a separately finite-to-one action of $S$ on $D$, and let $\psi: S \times X \to X$ be an $S$-flow. The following are equivalent:
\begin{enumerate}
\item $\psi$ is a quotient of $\varphi^*$.
\item Every metrizable quotient of $\psi$ is a quotient of $\varphi^*$.
\item Some metrizable reflection of $\psi$ is a quotient of $\varphi^*$.
\item Every metrizable quotient of $\psi$ contains a tail-dense $\varphi$-like sequence.
\item Some metrizable reflection of $\psi$ contains a tail-dense $\varphi$-like sequence.
\end{enumerate}
\end{theorem34}

\begin{proof}
The structure of the proof is as follows. The implications $(3) \Rightarrow (2)$ and $(5) \Rightarrow (4)$ are trivial. We will first prove that $(1) \Rightarrow (3)$, and then prove a lemma showing that $(2) \Leftrightarrow (4)$ and $(3) \Leftrightarrow (5)$. Finally, we will show $(4) \Rightarrow (1)$ to finish the proof. Showing $(4) \Rightarrow (1)$ is the longest and most difficult part of the proof.

To show that $(1)$ implies $(3)$, it is enough to show that every metrizable reflection of $\psi$ is a quotient of $\psi$, because the composition of two quotient mappings is again a quotient mapping. But this is obvious: if $M$ is a countable elementary submodel of $H$, so that $\psi^M$ is a metrizable reflection of $\psi$, then the natural projection $\Pi_{\dlt^M}: X \to X^M$ is a quotient mapping.

The following lemma immediately implies that $(2) \Leftrightarrow (4)$ and $(3) \Leftrightarrow (5)$.

\begin{lemma}
Suppose $Z$ is a compact metric space and $\mu: G \times Z \to Z$ is a $G$-flow. Let $\varphi: G \times D \to D$ be an action of $G$ on a countable set $D$. Then $\mu$ is a quotient of $\varphi^*$ if and only if $Z$ contains a tail-dense $\varphi$-like sequence. 
\end{lemma}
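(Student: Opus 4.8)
The plan is to pass, in both directions, through a concrete $D$-indexed sequence of points of $Z$ and to invoke Lemma~\ref{lem:psilike} with $Y = X = Z$, using the fact (observed after Definition~\ref{def:philike}) that when $X = Y$ is metrizable the open-cover notion of ``$\varphi$-like'' coincides with the original $\e$-based one. For the forward direction, given a tail-dense $\varphi$-like sequence $\seq{x_n}{n \in D}$ in $Z$, one sets $\pi \colon D^* \to Z$, $\pi(\U) = \ulim x_n$; Lemma~\ref{lem:ulims2}(1) makes $\pi$ continuous, Lemma~\ref{lem:ulims2}(2) together with tail-density makes $\pi$ onto $Z$, and then Lemma~\ref{lem:psilike} (conditions (a) and (b) both holding) says that $\pi$ is a quotient mapping from $\varphi^*$ to $\mu$, so $\mu$ is a quotient of $\varphi^*$.

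The converse is where the work lies. Suppose $q \colon D^* \to Z$ is a quotient mapping from $\varphi^*$ to $\mu$. Since $q$ is only defined on the closed subspace $D^* \subseteq \beta D$, the first move is to realize it by an honest $D$-indexed sequence: fix an embedding of $Z$ as a closed subspace of the Hilbert cube $Q = [0,1]^{\w}$, extend $q$ coordinatewise (via Tietze's theorem for the normal space $\beta D$) to a continuous $\tilde q \colon \beta D \to Q$, and put $x_n = \tilde q(n)$ for $n \in D$, identifying $D$ with the set of principal ultrafilters. Then $\tilde q$ is the Stone extension of $n \mapsto x_n$, so $\ulim x_n = \tilde q(\U)$ for every $\U$, and in particular $\ulim x_n = q(\U) \in Z$ for every $\U \in D^*$.

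The crux is then the observation that $\mathrm{dist}(x_n, Z) \to 0$. This is because the cluster points of $\seq{x_n}{n \in D}$ in the compact metric space $Q$ are exactly the points $\ulim x_n$ for $\U \in D^*$ — given a cluster point $w$, the sets $\set{n}{x_n \in V}$, for $V$ ranging over the neighborhoods of $w$, are infinite and closed under finite intersection, hence extend (together with the cofinite filter) to some $\U \in D^*$ with $\ulim x_n = w$ — so all cluster points of $\seq{x_n}{n \in D}$ lie in $Z$; if some $\delta > 0$ had $\mathrm{dist}(x_n, Z) \ge \delta$ for infinitely many $n$, a cluster point of those $x_n$ would sit at distance $\ge \delta$ from the closed set $Z$, which is absurd. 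Now choose $y_n \in Z$ with $\mathrm{dist}(x_n, y_n) = \mathrm{dist}(x_n, Z)$. Then $\seq{x_n}{n \in D}$ and $\seq{y_n}{n \in D}$ are tail-similar (their distances tend to $0$), so by Lemma~\ref{lem:ulims3}(3) they induce the same map $D^* \to Z$, which is $q$. Since $q$ is a quotient mapping, the ``only if'' part of Lemma~\ref{lem:psilike} (again with $Y = X = Z$) yields that $\seq{y_n}{n \in D}$ is $\varphi$-like and — by its condition (a), i.e.\ Lemma~\ref{lem:ulims2}(2) — tail-dense. This is the desired sequence.

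I expect the main obstacle to be exactly this converse step of converting the quotient map $q$, which lives on $D^*$ and not on $D$, into a genuine sequence of points of $Z$: the Tietze extension only produces a sequence in the ambient cube $Q$, and it takes the remark that all its cluster points already lie in $Z$ (forcing $\mathrm{dist}(x_n, Z) \to 0$) to retract it back into $Z$ without disturbing the induced map. Everything else is a direct appeal to the $\U$-limit lemmas of this subsection and to Lemma~\ref{lem:psilike}.
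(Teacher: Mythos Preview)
Your proof is correct and follows essentially the same route as the paper: both directions go through Lemma~\ref{lem:psilike}, and for the hard direction you embed $Z$ in the Hilbert cube, extend the quotient map to all of $\beta D$ to obtain a sequence in $Q$, and then retract to $Z$ via nearest points, using tail-similarity (Lemma~\ref{lem:ulims3}(3)) to preserve the induced map. The paper packages this last step as a standalone ``folklore fact'' (citing the $[0,1]$ case as a defining property of $D^*$ rather than invoking Tietze explicitly) and leaves the tail-similarity verification to the reader, whereas you spell out the cluster-point argument for $\mathrm{dist}(x_n,Z)\to 0$; but the substance is the same.
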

\begin{proof}
In order to prove our lemma, we will need the following folklore result:

\vspace{2mm}

\noindent \emph{Fact: } If $F: D^* \to Z$ is a continuous function, then there is a $D$-indexed sequence $\seq{z_n}{n \in D}$ of points in $Z$ such that $F(\U) = \ulim z_n$ for all $\U \in D^*$.

\vspace{2mm}

\begin{proof}[Proof of fact:] This fact is well-known in the case $Z = [0,1]$ (indeed, some authors take this as the defining property of $D^*$). One way to prove this fact in general is to apply Urysohn's theorem, which allows us to view $Z$ (up to homeomorphism) as a closed subspace of $[0,1]^\w$. For each $m \in \w$, the map $\pi_m \circ F$ is a continuous function $D^* \to [0,1]$, so the special case of the fact allows us to find a $D$-indexed sequence $\seq{s_n^m}{n \in D}$ of points in $Z$ such that $\pi_m \circ F(\U) = \ulim s_n^m$ for all $\U \in D^*$. Taking $s_n = \Delta_{n \in \w}s^m_n$ now gives us a $D$-indexed sequence $\seq{s_n}{n \in D}$ of points in $[0,1]^\w$ (but not necessarily in $Z$) such that $F(\U) = \ulim s_n$ for all $\U \in D^*$. Finally, we may get $z_n$ from $s_n$ by using the metrizability of $[0,1]^\w$. Define $z_n$ to be any point of $Z$ such that $\mathrm{dist}(z_n,s_n) = \mathrm{dist}(Z,s_n)$ (such a point exists because $Z$ is closed). One easily shows that the sequences $\seq{z_n}{n \in D}$ and $\seq{s_n}{n \in D}$ are tail-similar, so that Lemma~\ref{lem:ulims3}(3) completes the proof.
\end{proof}

Returning to the proof of the lemma, suppose $\pi: D^* \to Z$ is a quotient mapping from $\varphi^*$ to $\mu$. Using the above fact, there is a $D$-indexed sequence $\seq{z_n}{n \in D}$ of points in $Z$ such that $\pi(\U) = \ulim z_n$ for all $\U \in D^*$. By Lemma~\ref{lem:psilike}, the sequence $\seq{z_n}{n \in D}$ is $\varphi$-like, which proves the forward direction. Conversely, if a sequence $\seq{z_n}{n \in D}$ of points in $Z$ is $\varphi$-like, then Lemma~\ref{lem:psilike} states that the function $\U \mapsto \ulim z_n$ is a quotient mapping from $\varphi^*$ to $\mu$. 
\end{proof}

\vspace{3mm}

It remains to show that $(4)$ implies $(1)$. Let $M$ be a countable elementary submodel of $H$ with $X, \psi \in M$, so that $\psi^M$ is a metrizable reflection of $\psi$, and suppose that $X^M$ contains a $\varphi$-like sequence of points. By Lemma~\ref{lem:psilike}, it suffices to construct a $\varphi$-like sequence of points in $[0,1]^{\w_1}$; this is what we shall do.

We will construct a map $q_\xi: D \to [0,1]$ for every $\xi < \w_1$ via a length-$\w_1$ transfinite recursion. In the end, the diagonal mapping $Q = \Delta_{\xi < \w_1}q_\xi$ will define a $\varphi$-like sequence $\seq{Q(n)}{n \in \w}$ in $[0,1]^{\w_1}$.

Before beginning this construction, we will need a little more terminology. Let us say that $\O$ is a \emph{nice} open cover for $[0,1]^{\w_1}$ if it is an open cover consisting of finitely many sets of the form
$$\pi_{\a_1}^{-1}[I_1] \cap \pi_{\a_2}^{-1}[I_2] \cap \dots \cap \pi_{\a_k}^{-1}[I_k]$$
where each $\a_j$ is an ordinal $<\! \w_1$ and each $I_j$ is an open interval in $[0,1]$ with rational endpoints.

Observe that the nice open covers suffice to describe the topology of $[0,1]^{\w_1}$ (in the sense that every open cover of $[0,1]^{\w_1}$ is refined by a nice open cover). This proves the following simple but useful observation:

\begin{observation1} \label{ob:nice}
If a sequence $\seq{x_n}{n \in D}$ of points in $[0,1]^{\w_1}$ is $\varphi$-like for every nice open cover, then it is $\varphi$-like.
\end{observation1}

Also observe that every nice open cover $\O$ is (uniformly) definable from a finite list of ordinals. For example, the open set above is definable from $\a_1,\a_2,\dots,\a_k$ (we do not need to mention the intervals $I_1,I_2,\dots,I_k$ in a definition of this set, because the intervals have rational endpoints, and rational numbers are definable without parameters), and $\O$ consists of finitely many open sets like this one. We say that a nice open cover is defined using ordinals $<\!\dlt$ if each ordinal $\a_i$ appearing as above in the definition of some $U \in \O$ is $<\!\dlt$.

Finally, observe that whether a sequence is $\varphi$-like for some nice open cover $\O$ depends only on the projection of that sequence onto the ordinals used to define $\O$. Specifically, we have:

\begin{observation2}\label{ob:bdd}
Let $\dlt \leq \w_1$ and let $\O$ be a nice open cover of $[0,1]^{\w_1}$ defined using only ordinals $<\!\dlt$. If $\seq{x_n}{n \in D}$ and $\seq{y_n}{n \in D}$ are $D$-indexed sequences in $[0,1]^{\w_1}$ such that
$$\Pi_{\dlt}(x_n) = \Pi_{\dlt}(y_n) \qquad \text{for all } n \in D,$$
then either both sequences are $\varphi$-like for $\O$ or neither is.
\end{observation2}

Abusing our terminology slightly, if $\dlt < \w_1$ then a sequence $\seq{x_n}{n \in D}$ of points in $[0,1]^\dlt$ will be called $\varphi$-like for some nice open cover $\O$ of $[0,1]^{\w_1}$ provided that $\O$ is defined using ordinals $<\!\dlt$, and that any sequence $\seq{y_n}{n \in D}$ of points in $[0,1]^{\w_1}$ with 
$$\Pi_{\dlt'}(x_n) = \Pi_{\dlt'}(y_n) \qquad \text{for all } n \in D$$
is $\varphi$-like for $\O$. Similarly, a sequence of points in $[0,1]^\dlt$ is called $\varphi$-like if it is $\varphi$-like for every nice open cover of $[0,1]^{\w_1}$ defined using ordinals $<\!\dlt$.

For the remainder of the proof, it will be convenient to identify $D$ with $\w \setminus \{0\} = \N$. This sacrifices no generality as, until now, $D$ was an arbitrary countable set.

With these things in place, let us turn to our recursive construction. Using the L\"{o}wenheim-Skolem theorem, fix a sequence $\seq{M_\a}{\a < \w_1}$ of countable elementary submodels of $H$ such that
\begin{enumerate}
\item $M_0 = M$.
\item for each $\a$, $\seq{M_\b}{\b \leq \a} \in M_{\a+1}$.
\item for limit $\a$, $M_\a = \bigcup_{\b < \a}M_\b$.
\end{enumerate}
For convenience, we will write $\dlt_\a$ instead of $\dlt^{M_\a}$ for each $\a < \w_1$.

As stated above, we will construct a map $q_\xi: D \to [0,1]$ for every $\xi < \w_1$ via a length-$\w_1$ transfinite recursion. Step $\a$ of the recursion will be used to construct simultaneously all the maps $q_\xi$ with $\xi \in \dlt_\a \setminus \bigcup_{\b < \a}\dlt_\b$. The construction will ensure that at the end of stage $\a$, the map $\Delta_{\xi < \dlt_\a}q_\xi$ defines a $\varphi$-like sequence of points in $[0,1]^{\dlt_\a}$.

For the base step of the recursion, we use our assumption that the metrizable reflection $\psi^M$ of $\psi$ contains a $\varphi$-like sequence. Let $\seq{r_n}{n \in D}$ be a $\varphi$-like sequence of points in $X^{M_0}$, and recall that $X^{M_0} \sub [0,1]^{\dlt_0}$. Taking $q_\xi(n) = \pi_\xi(r_n)$, the map $\Delta_{\xi < \dlt_0}q_\xi$ defines a $\varphi$-like sequence of points in $[0,1]^{\dlt_0}$ as desired.

Without loss of generlity, we may (and do) assume that each of the real numbers $q_\xi(n)$, for $\xi < \dlt_0$ and $n \in D$, is rational. To justify this assumption, use the fact that $[0,1]^{\dlt_0}$ is metrizable to choose for each $r_n$ a point $r_n' \in M_0$ with rational coordinates such that $\mathrm{dist}(r_n,r_n') < \nicefrac{1}{n}$. These two sequences are clearly similar. Hence $\seq{r'_n}{n \in D}$ is $\varphi$-like, and replacing $\seq{r_n}{n \in D}$ with $\seq{r_n'}{n \in D}$ in the definition of the $q_\xi$ makes each number of the form $q_\xi(n)$ rational.

[Note: The reason for making the $q_\xi(n)$ all rational is that we will need $q_\xi(n) \in M_0$ in order to make use of the elementarity assumption $M_0 \preceq M_1$ at the next stage of the recursion. Making each $q_\xi(n)$ rational is just a convenient way to accomplish this.]

This completes the base step of the recursion. Observe that the construction of the $q_\xi$, $\xi < \dlt_0$, can be carried out in $M_1$, because $M_0 \in M_1$.



For later stages of the recursion, we assume that the following three inductive hypotheses hold at the end of stage $\a$: letting $y^\a_n = \Delta_{\xi < \dlt_\a}q_\xi(n)$ for convenience, then
\begin{itemize}
\item [(H1)] the sequence $\seq{y^\a_n}{n \in \w}$ is $\varphi$-like in $[0,1]^{\dlt_\a}$.
\item [(H2)] the sequence $\seq{y^\a_n}{n \in \w}$ is a member of $M_{\a+1}$.
\item [(H3)] for each $\xi < \a$, $q_\xi(n)$ is a rational number.
\end{itemize}
Notice that these hypotheses hold at the end of stage $\a=0$ described above.

For limit $\a$ there is nothing to do: clause $(3)$ in our choice of the $M_\a$ guarantees that $\dlt_\a = \bigcup_{\b < \a}\dlt_\b$ for all limit ordinals $\a$, so at stage $\a$ the maps $q_\xi$ are already defined for every $\xi < \dlt_\a$. The hypotheses $(\text{H}2)$ and $(\text{H}3)$ clearly hold at $\a$ if they hold for every $\b < \a$. For $(\text{H}1)$, note that every nice open cover of $[0,1]^{\w_1}$ is defined from only finitely many ordinals. Thus, because $\dlt_\a = \bigcup_{\b < \a}\dlt_\b$, any nice open cover of $[0,1]^{\w_1}$ defined from ordinals $<\!\dlt_\a$ is already defined from ordinals $<\!\dlt_\b$ for some $\b < \a$. $(\text{H}1)$ at $\a$ now follows from Observation 2 and the fact that $(\text{H}1)$ holds at $\b$ for every $\b < \a$.

For the successor stages of the recursion, fix $\a < \w_1$ and suppose the functions $q_\xi$ have already been constructed for every $\xi < \dlt_\a$. For each $n$, let $y^\a_n = \Delta_{\xi < \dlt_\a}q_\xi(n)$.
We will show how to obtain $q_\xi$ for $\dlt_\a \leq \xi < \dlt_{\a+1}$.

There are only countable many nice open covers of $[0,1]^{\w_1}$ defined using ordinals $<\!\dlt_{\a+1}$. Also, any finitely many of these covers have a common refinement that is also a nice open cover of $[0,1]^{\w_1}$ defined using ordinals $<\!\dlt_{\a+1}$. Thus we may find a countable sequence $\seq{\O_m}{m < \w}$ of nice open covers of $[0,1]^{\w_1}$ defined using ordinals $<\!\dlt_{\a+1}$ such that
\begin{enumerate}
\item $\O_m \in M_{\a+1}$ for every $m \in \w$,
\item $\O_m$ refines $\O_\ell$ whenever $\ell \leq m$, and
\item if $\O$ is any basic open cover of $[0,1]^{\dlt_{\a+1}}$, then some $\O_m$ refines $\O$.
\end{enumerate}
Note that this part of the construction occurs ``outside'' $M_{\a+1}$, because we are using the fact that $\dlt_{\a+1}$ is countable.

Fix $m \in \w$ and consider $\O_m$. The set of ordinals used in the definition of $\O_m$ is finite and may be split into two parts: those ordinals that are below $\dlt_\a$, and those that are in the interval $[\dlt_\a,\dlt_{\a+1})$. Let us call these two finite sets of ordinals $F_m^\downarrow$ and $F_m^\uparrow$, respectively.

The ordinals in $F_m^\uparrow$ are not in $M_\a$, but for each ordinal $\xi \in F_m^\uparrow$ we may use elementarity to find an ``avatar'' ordinal $\z \in M_{\a}$ that acts like $\xi$, at least with respect to some prescribed first-order formula. The idea behind defining the $q_\xi$ for $\xi \geq \dlt_\a$ is to find a sequence of increasingly faithful avatars, and then to define $q_\xi$ by diagonalizing across the avatar functions $q_\z$.

More precisely, let $F_m^\uparrow = \set{\xi_i}{i \leq \ell_m}$. For any first-order formula
$$\Phi(\xi_1,\xi_2,\dots,\xi_{\ell_m},a)$$
where $a$ is a parameter from $M_\a$, we may apply elementarity to find a set $E = \set{\z_i}{i \leq \ell_m} \sub M_\a$ such that
$$M_{\a+1} \models \Phi(\xi_1,\xi_2,\dots,\xi_{\ell_m},a) \qquad \Leftrightarrow \qquad M_\a \models \Phi(\z_1,\z_2,\dots,\z_{\ell_m},a)$$
Formulas containing more and more information about $X$ and $\psi$ will yield increasingly faithful avatars of the ordinals in $F_m^\uparrow$.

Let us enumerate $S = \{p_0,p_1,p_2,p_3,\dots\}$. For every $m < \w$ it is possible to write down in the language of first-order logic a (very long) formula $\Phi^m(x_1,x_2,\dots,x_{\ell_m},X,\psi)$ that does all of the following:
\begin{enumerate}
\item $\Phi^m$ declares that each $x_i$ is a countable ordinal.
\item $\Phi^m$ defines a nice open cover in terms of $F^0_m$ and $x_1,x_2,\dots,x_{\ell_m}$. This is done in the natural way, so that the open cover defined by setting $x_1=\xi_1,x_2=\xi_2,\dots,x_{\ell_m}=\xi_{\ell_m}$ is $\O_m$.
\item $\Phi^m$ declares that certain properties hold of the nice open cover it defines and how that open cover interacts with $X$ and with the maps $\psi_{p_0},\psi_{p_1},\dots,\psi_{p_m}$. This is again done in the natural way, so that $\Phi^m(\xi_1,\xi_2,\dots,\xi_{\ell_m},X,\psi)$ declares the complete list of the following combinatorial properties of $\O_m$:
\begin{enumerate}
\item for all $\mathcal J \sub \O_m$, $\Phi^m$ asserts either that $\bigcap \mathcal J \cap X = \0$ or that $\bigcap \mathcal J \cap X \neq \0$,
\item if $\mathcal J \sub \O_m$, $\bigcap \mathcal J \cap X \neq \0$, $U \in \O_m$, and $i \leq m$, then $\Phi^m$ asserts either that $\psi_{p_i}[\bigcup \mathcal J \cap X] \cap U = \0$ or that $\psi_{p_i}[\bigcup \mathcal J \cap X] \cap U \neq \0$.
\end{enumerate}
\end{enumerate}
Put simply, the formula $\Phi^m$ records the definition of the open cover $\O_m$ and its behavior with respect to the maps $\psi_{p_0},\psi_{p_1},\dots,\psi_{p_m}$.

If $E = \seq{\z_i}{i \leq \ell_m}$ is a finite sequence of ordinals $<\!\dlt_{\a+1}$ such that $\Phi^m(\z_1,\z_2,\dots,\z_{\ell_m},X,\psi)$ holds, let us write $\O^m(E)$ for the basic open cover defined by $\Phi^m$. For example, $\O^m(F_m^\uparrow) = \O_m$ for all $k$.

Given two points $x,x'$ in $[0,1]^{\w_1}$, the information contained in $(2)$ is enough to determine precisely which elements of $\O^m(E)$ contain each of $x$ and $x'$. Once that is known, the information in $(3)$ is enough to determine whether or not $x \mapstooof x'$ for any $p = p_0,p_1,\dots,p_m$. More formally, we have:

\begin{observation3}\label{obs:itworks,bitches!}
Suppose $F = \<\z_1,\dots,\z_{\ell_m}\>$ is a finite sequence of ordinals $<\!\dlt_\a$, that $E = \<\xi_1,\dots,\xi_{\ell_m}\>$ is a finite sequence of ordinals $<\!\dlt_{\a+1}$, and that
$$M_\a \models \Phi^m(\z_1,\z_2,\dots,\z_{\ell_m},X,\psi) \ \ \ \text{and} \ \ \ M_{\a+1} \models \Phi^m(\xi_1,\xi_2,\dots,\xi_{\ell_m},X,\psi).$$
Suppose further that $x,x' \in [0,1]^{\dlt_\a}$ and $y,y' \in [0,1]^{\dlt_{\a+1}}$, and that 
$$\pi_{\z_i}(x) = \pi_{\xi_i}(y) \qquad \text{and} \qquad \pi_{\z_i}(x') = \pi_{\xi_i}(y')$$
for all $i \leq \ell$. Then
$$x \mapstoooaf x' \qquad \text{implies} \qquad y \mapstooobf y'$$
for all $j \leq m$.
\end{observation3}

By the inductive hypothesis $(\text{H}3)$, $\Phi^m(F^\uparrow_m)$ is expressible in $M_{\a+1}$. Furthermore, our choice of $\Phi^m$ and $F^\uparrow_m$ ensures $M_{\a+1} \models \Phi^m(F^\uparrow_m)$. Thus
$$M_{\a+1} \models \exists x_1,x_2,\dots,x_{\ell_m}\Phi^m(x_1,x_2,\dots,x_{\ell_m},X,\psi).$$
By elementarity, $M_{\a} \models \exists x_1,x_2,\dots,x_{\ell_m}\Phi^m(x_1,x_2,\dots,x_{\ell_m},X,\psi)$, whence there is a finite sequence $E^m = \seq{\z_i}{i \leq \ell_m}$ of ordinals in $M_\a$ such that
$$M_\a \models \Phi^m(\z_1,\z_2,\dots,\z_{\ell_m},X,\psi).$$
If $\xi \in F^\uparrow_m$, then we will denote by $\z^m_\xi$ the corresponding member of $E^m$.

For each $m$, let $k(m)$ be the least natural number with the property that for all $n \geq k(m)$ and all $j \leq m$, 
$$y^\a_n \mapstoooof y^\a_{\varphi_{p_j}(n)}.$$
This $k(m)$ exists by our inductive hypothesis $(\text{H}1)$, because $\O^m(E^m)$ is a nice open cover of $[0,1]^{\w_1}$ defined with ordinals $<\!\dlt_\a$. If $m < m'$, then $\O_{m'}$ refines $\O_m$ and there are more functions $\psi_{p_j}$ to consider; so $k(m) \leq k(m')$. In other words, the function $m \mapsto k(m)$ is non-decreasing.

We are now in a position to define the maps $q_\xi$ for $\dlt_\a \leq \xi < \dlt_{\a+1}$:
$$
q_\xi(n) = \begin{cases} 
0 & \textrm{ if $k(m) < n \leq k(m+1)$ and $\xi \notin F_m^\uparrow$,} \\
q_{\z^m_\xi}(i) & \textrm{ if $k(m) < n \leq k(m+1)$ and $\xi \in F_m^\uparrow$.}
\end{cases}
$$
Roughly, this says that $q_\xi$ assumes the behavior of its avatar function $q_{\z_\xi^m}$ on the interval between $k(m)$ and $k(m+1)$, provided some suitable avatar has already been found. As $m$ increases, $\z^m_\xi$ becomes a better and better avatar, because the formula $\Phi^m$ includes more and more information about the topology of $[0,1]^{\w_1}$ and $X$ and about the flow $\psi$.

With the $q_\xi$ thus defined, we need to check that our inductive hypotheses $(\text{H}1)$-$(\text{H}3)$ remain true at the next stage of the recursion. As before, $(\text{H}2)$ and $(\text{H}3)$ are easy to check. Indeed, $(\text{H}3)$ follows trivially from the definition of the $q_\xi$ for $\dlt_\a \leq \xi < \dlt_{\a+1}$. For $(\text{H}2)$, note that because $\seq{M_\b}{\b \leq \a+1} \in M_{\a+2}$, the above construction of the $q_\xi$, $\dlt_\a \leq \xi < \dlt_{\a+1}$, can be carried out in $M_{\a+2}$. Thus the result of this construction, namely the sequence $\seq{\Delta_{\b < \dlt_{\a+1}}q_\b(n)}{ n \in \w}$, is a member of $M_{\a+2}$ as well.

For $(\text{H}1)$, let us write $y^{\a+1}_n = \Delta_{\xi < \dlt_{\a+1}}(n)$ for each $n$. Let $\O_m$ be one of the nice open covers considered in our construction; we wish to show that $\seq{y^{\a+1}_n}{n \in D}$ is $\varphi$-like with respect to $\O_m$. Because $m$ is arbitrary, this suffices to show that $(\text{H}1)$ holds at $\a+1$.

Fix $n$ with $k(m) < n \leq k(m+1)$. Then
$$y^{\a}_n \mapstoooof y^{\a}_{\varphi_{p_j}(n)}$$
for each $j \leq m$ by the choice of $k(m)$.
From this, from Observation 3, and from our choice of $E^m$, we deduce
$$y^{\a+1}_n \mapstooocf y^{\a+1}_{\varphi_{p_j}(n)}$$
or, equivalently,
$$y^{\a+1}_n \mapstooodf y^{\a+1}_{\varphi_{p_j}(n)}$$
for each $j \leq m$. If $m' > m$ and $k(m') < n \leq k(m'+1)$, then similarly
$$y^{\a+1}_n \mapstoooef y^{\a+1}_{\varphi_{p_j}(n)}$$
for each $j \leq m$. Because $\O^{m'}$ refines $\O^m$, this implies
$$y^{\a+1}_n \mapstooodf y^{\a+1}_{\varphi_{p_j}(n)}$$
for each $j \leq m$ again. Thus this relation holds for all but finitely many $n$, namely for all $n > k(m)$. In other words, we have shown that, for every $m$.
\begin{equation}
\text{if } j \leq m \text{ then} \ y^{\a+1}_n \mapstooodf y^{\a+1}_{\varphi_{p_j}(n)} \ \text{for all } n > k(m).
\tag{$*$}
\end{equation}
That $(*)$ holds for every $m$ implies that $\seq{y^{\a+1}_n}{n \in D}$ is $\varphi$-like in $[0,1]^{\dlt_{\a+1}}$. This proves that $(\text{H}1)$ holds at $\a+1$ and completes the successor step of our recursion.

We claim that the map $Q = \Delta_{\a < \w_1}q_\a$ is as required; i.e., the sequence $\seq{Q(n)}{n < \w}$ is a $\varphi$-like sequence in $[0,1]^{\w_1}$. The argument is essentially the same as it was for the preservation of $(\text{H}1)$ at limit stages of the recursion. If $\O$ is a nice open cover of $[0,1]^{\w_1}$, then $\O$ is defined by finitely many ordinals. Thus there is some $\a < \w_1$ such that $\O$ is defined from ordinals $<\!\a$. Because $(\text{H}1)$ is true at $\a$, Observation 2 implies that $\seq{Q(n)}{n < \w}$ is a $\varphi$-like sequence for $\O$. Observation 1 tells us that it suffices to consider only the nice open covers, so $\seq{Q(n)}{n < \w}$ is $\varphi$-like. An application of Lemma~\ref{lem:psilike} completes the proof. 
\end{proof}



\end{document}